\numberwithin{equation}{section}
    \definecolor{linkred}{rgb}{0.7,0.2,0.2}
    \definecolor{linkblue}{rgb}{0,0.2,0.6}
    \definecolor{linkgreen}{rgb}{0,0.6,0.2}
\newtheorem{Th}{Theorem}[section]
\newtheorem{Lemma}[Th]{Lemma}
\newtheorem{Coro}[Th]{Corollary}
\newtheorem{Prop}[Th]{Proposition}
\newtheorem{Thm}{Theorem}[section]
\theoremstyle{definition}
\newtheorem{Eg}[Th]{Example}
\newtheorem{Rmk}[Th]{Remark}
\newcommand{\CP}{\mathrm{BPD}}
\newcommand{\wt}{\mathrm{wt}}
\newcommand{\id}{\mathsf{id}}
\newcommand{\supp}{\mathrm{supp}}
\def\S{\mathfrak{S}}
\newcommand{\BPD}[2][1pc]{%
\setlength{\unitlength}{#1}
\definecolor{lightcyan}{rgb}{0.8,1,1}% 
\def\FF{%
    \qbezier(0.5,0)(0.5,0.2)(0.5,0.2)
    \qbezier(1,0.5)(0.8,0.5)(0.8,0.5)
    \qbezier(0.8,0.5)(0.5,0.5)(0.5,0.2)
}
\def\JJ{%
    \qbezier(0.5,1)(0.5,0.8)(0.5,0.8)
    \qbezier(0,0.5)(0.2,0.5)(0.2,0.5)
    \qbezier(0.5,0.8)(0.5,0.5)(0.2,0.5)
    }
\def\II{%
    \qbezier(0.5,0)(0.5,0.5)(0.5,1)}%
\def\HH{%
    \qbezier(0,0.5)(0.5,0.5)(1,0.5)}%
\def\XX{\NN\HH}
\def\NN{%
    \qbezier(0.5,0)(0.5,0.3)(0.5,0.3)
    \qbezier(0.5,1)(0.5,0.7)(0.5,0.7)}%
\def\BPDfr##1{% 
\begin{picture}(1,1)%
    \linethickness{0.08\unitlength}
    ##1
    \thinlines
    \color{lightgray}%
    \put(0,0){\line(0,1){1}}%
    \put(1,0){\line(0,1){1}}%
    \put(0,0){\line(1,0){1}}%
    \put(0,1){\line(1,0){1}}%
\end{picture}}
\def\BPDfrc##1{\BPDfr{\put(0,0){\color{lightcyan}\rule{\unitlength}{\unitlength}}##1}}
\def\x{\BPDfrc{\XX}}
\def\f{\BPDfrc{\FF}}
\def\o{\BPDfrc{}}
\def\j{\BPDfrc{\JJ}}
\def\h{\BPDfrc{\HH}}
\def\i{\BPDfrc{\II}}
\def\O{\BPDfr{}}
\def\X{\BPDfr{\XX}}
\def\BX{\BPDfr{\II\HH}}
\def\F{\BPDfr{\FF}}
\def\J{\BPDfr{\JJ}}
\def\H{\BPDfr{\HH}}
\def\I{\BPDfr{\II}}
\def\B{\BPDfr{\FF\JJ}}
\def\|{
    \begin{picture}(0,1)%
        \linethickness{0.08\unitlength}
        \thinlines
        \color{lightgray}%
        \put(0,0){\line(0,1){1}}%
    \end{picture}}
\def\_{
    \begin{picture}(0,1)%
        \linethickness{0.08\unitlength}
        \thinlines
        \color{lightgray}%
        \put(0,0){\line(1,0){1}}%
    \end{picture}}
\def\^{
    \begin{picture}(0,1)%
        \linethickness{0.08\unitlength}
        \thinlines
        \color{lightgray}%
        \put(0,1){\line(1,0){1}}%
    \end{picture}}
\def\M##1{\begin{picture}(1,1)%
    \put(0,0.2){\makebox[\unitlength]{$##1$}}
\end{picture}}
\begin{array}{@{\,}c@{\,}}
\def\dots{\scriptstyle\cdots}
{\def\arraystretch{0}
\setlength{\arraycolsep}{0pc}
\color{teal}
\begin{array}{@{}l@{}}%
#2\end{array}}
\end{array}}
\newcommand{\clan}[2][1.2pc]{%
\setlength{\unitlength}{\dimexpr#1/2}% 
\def\drawclan##1{%
\expandafter%
    \drawclanbegin.##1%
    \drawclanend\drawclanendend}
\def\drawclanbegin.##1##2\drawclanendend{%
    \ifx\drawclanend##1%
        {\relax}% 
    \else%
    \clandot{##1}% 
    \expandafter%
        \drawclanbegin.##2\drawclanendend%
\fi}
\def\clandot##1{%
    \if+##1% Case 1: +
    \begin{picture}(2,1)
        \linethickness{0.15\unitlength}
        \color{red}
        \qbezier(0.4,0.4)(1,0.4)(1.6,0.4)
        \qbezier(1,1.0)(1,0.4)(1,-.2)
    \end{picture}
    \else\if-##1% Case 2: -
    \begin{picture}(2,1)
        \linethickness{0.15\unitlength}
        \color{blue}
        \qbezier(0.5,0.4)(1,0.4)(1.5,0.4)
    \end{picture}
    \else\if.##1% Case 3: . 
    \begin{picture}(2,1)
        \color{teal}
        \put(1,0.4){\circle*{0.7}}
    \end{picture}
    \else% Case 4: \dots
        \def\inpA{##1}\def\inpB{\dots}%
    \ifx\inpA\inpB%
    \begin{picture}(2,1)
        \color{teal}
        \put(0.5,0.4){\circle*{0.3}}
        \put(1,0.4){\circle*{0.3}}
        \put(1.5,0.4){\circle*{0.3}}
    \end{picture}
    \else% Case 5: empty
    \def\inpA{##1}\def\inpB{}%
    \ifx\inpA\inpB% 
        \relax% 
    \else% Case 6: \pm
        \def\inpA{##1}\def\inpB{\pm}%
    \ifx\inpA\inpB% 
    \begin{picture}(2,1)
        \linethickness{0.15\unitlength}
          \color{red}
          \qbezier(0.4,0.4)(1,0.4)(1.6,0.4)
          \qbezier(1,1.0)(1,0.4)(1,-.2)
          \color{blue}
          \qbezier(0.4,-.2)(1,-.2)(1.6,-.2)
    \end{picture}
    \else% Case 7: \mp
        \def\inpA{##1}\def\inpB{\mp}%
    \ifx\inpA\inpB% 
    \begin{picture}(2,1)
        \linethickness{0.15\unitlength}
        \color{red}
        \qbezier(0.4,0.4)(1,0.4)(1.6,0.4)
        \qbezier(1,1.0)(1,0.4)(1,-.2)
        \color{blue}
        \qbezier(0.4,1.0)(1,1.0)(1.6,1.)
    \end{picture}
    \else% Last case: number
    \begin{picture}(2,1)
        \color{white}\linethickness{4pt}
        \qbezier(1,0.4)({\numexpr(##1)+1},{\numexpr(##1)+1})({\numexpr(##1)*2+1},0.4)%
        \color{black}\linethickness{0.8pt}
        \qbezier(1,0.4)({\numexpr(##1)+1},{\numexpr(##1)+1})({\numexpr(##1)*2+1},0.4)%
        \color{teal}
        \put(1,0.4){\circle*{0.7}}
    \end{picture}
    \rule{0pc}{\dimexpr\unitlength*(##1)/2+\unitlength}%
    \fi\fi\fi\fi\fi\fi\fi% 
}%
{\drawclan{#2}}%
}% 
\DeclareSymbolFont{stixletters}{LS1}{stix}{m}{it}
\DeclareMathAccent{\cev}{\mathord}{stixletters}{"91}
\DeclareMathAccent{\vec}{\mathord}{stixletters}{"92}
\begin{document}

\author{Yiming Chen}
\address{Department of Mathematics, 
Sichuan University, Chengdu, Sichuan 610065, P.R. China}
\email{ym\_chen@stu.scu.edu.cn, fan@scu.edu.cn, yaom@stu.scu.edu.cn}

\author{Neil J.Y. Fan}

\author{Rui Xiong}
\address[Rui Xiong]{Department of Mathematics and Statistics, University of Ottawa, 150 Louis-Pasteur, Ottawa, ON, K1N 6N5, Canada}
\email{rxion043@uottawa.ca}

\author{Ming Yao}

\def\ww{{\mathbf{w}}}
\def\uu{{\mathbf{u}}}
\def\vv{{\mathbf{v}}}
\def\ow{{\overline{w}}}
\def\ou{{\overline{u}}}
\def\ov{{\overline{v}}}
\def\word{\mathfrak{w}}
\def\calO{\mathcal{O}}
\def\pt{\mathsf{pt}}
\def\s{\mathfrak{s}}

\title[BPDs \& Clans]
{Bumpless Pipe Dream Fragments\\
---Equivariant Geometry of Clans}
 
\begin{abstract}
In this paper, we establish a new geometric setting for bumpless pipe dreams and double Schubert polynomials.
Building on the notion of bumpless pipe dream fragments, we define clan polynomials as their weight generating functions.
It turns out that clan polynomials arise naturally in the  equivariant geometry  of ($GL_p\times GL_q$)-orbits over the flag variety $Fl_{p+q}$ parametrized by $(p,q)$-clans. 
Furthermore, we show that the coefficients in  the equivariant Schubert expansion of the fundamental classes of  ($GL_p\times GL_q$)-orbit closures are exactly clan polynomials, which  resolves an open problem posed by Wyser and Yong. 
 
\end{abstract}

\maketitle

\section{Introduction}

A bumpless pipe dream (BPD) associated with a permutation $w\in S_n$ is a tiling of the $n\times n$  grid with the following tiles
\begin{align}\label{tiles}
\BPD[1.6pc]{\O}\ \BPD[1.6pc]{\H}\ \BPD[1.6pc]{\I}\ \BPD[1.6pc]{\X}\ \BPD[1.6pc]{\J}\ \BPD[1.6pc]{\F}
\end{align}
such that there are exactly $n$  non-overlapping pipes, labeled $1,\ldots,n$, each beginning at the south boundary  and exiting at the east boundary. %Specifically, pipe 
BPDs were introduced by Lam, Lee, and Shimozono \cite{LLS18} to provide a  combinatorial model for double Schubert polynomials, and Weigandt \cite{Weigandt2021}  further showed that they can also serve as a combinatorial model for double Grothendieck polynomials by introducing  bumping tiles, see also Lascoux \cite{Las}.

In this paper, we reveal a new geometric interpretation of bumpless pipe dreams and double Schubert polynomials through the geometry of ($GL_p \times GL_q$)-orbit closures on the flag variety $Fl_{p+q}$.  
Observe that $(GL_{p+q}, GL_p \times GL_q)$ forms a symmetric pair. 
As shown by Vogan \cite{Vogan}, the geometry of these orbit closures is deeply intertwined with the representation theory of real groups; see also Lusztig--Vogan \cite{LV} for further developments.

The main combinatorial structure of this paper, which we call BPD fragments, is adapted from  BPDs supported on partitions considered by Weigandt \cite[Section 4.4]{Weigandt2021}; see the left figure \eqref{eq:BPDclaneg} for an example. 
The novel observation of this paper is that BPD fragments are naturally indexed by \emph{clans}, rather than permutations. 
For two positive integers $p$ and $q$, a ($p,q$)-clan is a partial matching  on $p+q$ nodes 
with unmatched nodes colored by $\clan{+}$ or $\clan{-}$, such that the number of $\clan{+}$ minus the number of $\clan{-}$ equals $p-q$, see the right figure in \eqref{eq:BPDclaneg} for an example.
\begin{equation}\label{eq:BPDclaneg}
\def\m#1{\makebox[1.2pc]{$#1$}}
\BPD{
\O\O\O\F\H\H\\
\O\F\H\X\H\M{}\\
\O\I\O\I\F\M{}\\
\F\J\F\X\J\M{}\\
\I\M{}\M{}\M{}\M{}\M{}\\
}\qquad 
\begin{matrix}
&\clan{6-84+-..-+.}\\
&\m{1}\m{2}\m{3}\m{4}\m{5}\m{6}\m{7}\m{8}\m{9}\m{10}\m{11}
\end{matrix}
\end{equation}

For a BPD fragment%  on a Young diagram
, we can  read off its indexing clan by the following procedure. Pull straight the boundary of the  Young diagram  starting from the northeast corner to the southwest corner, replacing each step by a node. If there is a pipe connecting two steps, then draw an arc connecting them. Replace the unmatched horizontal steps by $\clan{-}$ and vertical steps by $\clan{+}$. Conversely, given a clan $\gamma$, we can construct a Young diagram $\lambda(\gamma)$ by corresponding the left endpoints and $\clan{+}$ (resp., right endpoints and $\clan{-}$) of $\gamma$ to vertical (resp., horizontal) steps of the southeast boundary of $\lambda(\gamma)$.  For instance, the left BPD fragment  corresponds to the right clan in  \eqref{eq:BPDclaneg}.

Let $\CP(\gamma)$ denote the set of all BPD fragments of a clan $\gamma$, and denote $\lambda(\gamma)$ by the Young diagram of  $\gamma$.  Notice that different clans may have the same Young diagram.  For instance, there are eleven BPD fragments for the clan in \eqref{eq:BPDclaneg}, all can be obtained by applying droop operations on the Rothe BPD fragment (the leftmost one). 
\begin{equation}\label{eq:example of droop}
\BPD{
\O\O\O\F\H\H\\
\F\H\H\X\H\M{}\\
\I\O\F\X\H\M{}\\
\I\O\I\I\O\M{}\\
\I\M{}\M{}\M{}\M{}\M{}}
\begin{matrix}
\BPD[0.8pc]{
\O\O\O\F\H\H\\
\O\F\H\X\H\M{}\\
\F\J\F\X\H\M{}\\
\I\O\I\I\O\M{}\\
\I\M{}\M{}\M{}\M{}\M{}}
\BPD[0.8pc]{
\O\O\O\F\H\H\\
\O\F\H\X\H\M{}\\
\O\I\F\X\H\M{}\\
\F\J\I\I\O\M{}\\
\I\M{}\M{}\M{}\M{}\M{}}
\BPD[0.8pc]{
\O\O\O\O\F\H\\
\F\H\H\H\X\M{}\\
\I\O\F\H\X\M{}\\
\I\O\I\F\J\M{}\\
\I\M{}\M{}\M{}\M{}\M{}}
\BPD[0.8pc]{
\O\O\O\O\F\H\\
\O\F\H\H\X\M{}\\
\F\J\F\H\X\M{}\\
\I\O\I\F\J\M{}\\
\I\M{}\M{}\M{}\M{}\M{}}
\BPD[0.8pc]{
\O\O\O\O\F\H\\
\O\F\H\H\X\M{}\\
\O\I\F\H\X\M{}\\
\F\J\I\F\J\M{}\\
\I\M{}\M{}\M{}\M{}\M{}}\\[-0.7pc]\\
\BPD[0.8pc]{
\O\O\O\F\H\H\\
\F\H\H\X\H\M{}\\
\I\O\O\I\F\M{}\\
\I\O\F\X\J\M{}\\
\I\M{}\M{}\M{}\M{}\M{}}
\BPD[0.8pc]{
\O\O\O\F\H\H\\
\O\F\H\X\H\M{}\\
\F\J\O\I\F\M{}\\
\I\O\F\X\J\M{}\\
\I\M{}\M{}\M{}\M{}\M{}}
\BPD[0.8pc]{
\O\O\O\F\H\H\\
\O\O\F\X\H\M{}\\
\F\H\J\I\F\M{}\\
\I\O\F\X\J\M{}\\
\I\M{}\M{}\M{}\M{}\M{}}
\BPD[0.8pc]{
\O\O\O\F\H\H\\
\O\F\H\X\H\M{}\\
\O\I\O\I\F\M{}\\
\F\J\F\X\J\M{}\\
\I\M{}\M{}\M{}\M{}\M{}}
\BPD[0.8pc]{
\O\O\O\F\H\H\\
\O\O\F\X\H\M{}\\
\O\F\J\I\F\M{}\\
\F\J\F\X\J\M{}\\
\I\M{}\M{}\M{}\M{}\M{}}
\end{matrix}
\end{equation}

For a $(p,q)$-clan $\gamma$, define the \emph{clan polynomial}
\begin{align}
\s_{\gamma}(x)=\s_{\gamma}(x_1,\ldots,x_{n}):=\sum_{\pi\in \CP(\gamma)}\wt(\pi),
\end{align}
where $n=p+q$ and 
\[
\wt(\pi)=\prod_{\text{$(i,j)$ is an empty tile $\BPD[0.6pc]{\O}$}}
\bigl(x_i-x_{n-j+1}\bigr).
\]
For example, the left BPD fragment in \eqref{eq:BPDclaneg} has weight 
$$(x_{1}-x_{11})(x_{1}-x_{10})(x_{1}-x_{9})
(x_{2}-x_{11})
(x_{3}-x_{11})
(x_{3}-x_{9}).$$

When the first $p$ (resp., last $q$) nodes are either left (resp., right) endpoints or $\clan{+}$ (resp., $\clan{-}$),  define a partial permutation $w_{\gamma}$ as follows: for an $(i,j)$-matching in $\gamma$, let $w_{\gamma}(i)=p+q+1-j$. Then the clan  polynomial 
$\mathfrak{s}_\gamma(x_1,\ldots,x_{p+q})$ reduces to the double Schubert polynomial $\S_{w_{\gamma}}(x_1,\ldots,x_p;y_1,\ldots,y_q)$ after changing $x_j$ to $y_{p+q+1-j}$ for $p+1\le j\le p+q$, see \cite{Kuntson,cocoa}.

The concept of clans arises from the study of orbits of the subgroup $GL_p\times GL_q$ of $GL_{p+q}$ on the flag variety $Fl_{p+q}$. 
By \cite{MO90}, each ($p,q$)-clan $\gamma$ indexes a 
($GL_p\times GL_q$)-orbit of the flag variety $Fl_{p+q}$, whose closure is denoted by $Y_\gamma$. 
Since the standard maximal torus of $GL_n$ is contained in $GL_p\times GL_q$, we are safe to talk about 
the equivariant fundamental class 
$[Y_\gamma]_T\in H_T^*(Fl_{p+q})$
in the torus equivariant cohomology of flag varieties. 
A nicely behaved polynomial representative of $[Y_\gamma]_T$, denoted as $\Upsilon_\gamma(x;y)$, was constructed by Wyser and Yong \cite{WY} via divided difference operators.

Our first main result is to establish a relation between clan polynomials and the localization of   $[Y_\gamma]_T$ at certain torus fixed point $v_\gamma$ defined below. 
More precisely, we give an explicit expression of 
$$
[Y_\gamma]_T|_{v_\gamma}=
\Upsilon_\gamma(x;y)|_{v_\gamma}
:= \Upsilon_{\gamma}(y_{v_\gamma(1)},\ldots,y_{v_{\gamma}(n)}; y_1,\ldots,y_n)$$
in terms of clan polynomials.

Each $(p,q)$-clan $\gamma$ can be associated with a $p$-inverse Grassmannian $v_{\gamma}$ and  a $q$-inverse Grassmannian $u_{\gamma}$  as follows. Firstly, assign each $\clan{-}$ and left endpoint  with $1,2,\ldots, q$ from  left to right, and then assign each $\clan{+}$ and right endpoint with $q+1,q+2,\ldots,n$ from left  to right. Then $u_\gamma$ is obtained by reading the assigned labels of $\gamma$ from left to right. Similarly,  
assign each $\clan{+}$ and left endpoint  with $1,2,\ldots,p$ from left to right, and assign $\clan{-}$ and right endpoint  with  $p+1,p+2,\ldots,n$  from
left to right, then $v_{\gamma}$ is obtained by reading the assigned labels of $\gamma$ from  left to right. 
It is easy to see that, $v_\gamma=\id$ if and only if all the $\clan{+}$ and left endpoints are to the left of all the $\clan{-}$ and right endpoints, equivalently, $\lambda(\gamma)$ is the whole $p \times q$ rectangle. 
For example, let $\gamma$ be the right clan of \eqref{eq:BPDclaneg}, then 
\begin{equation}\label{eq:egofugammavgamma}
\def\m#1{\makebox[1.2pc]{$#1$}}
\begin{aligned}
\gamma&=\,\clan{6-84+-..-+.}\\
u_\gamma &= \,\m{1}\m{2}\m{3}\m{4}\m{7}\m{5}\m{8}\m{9}\m{6}\m{10}\m{11} % {\tt 123475869AB},\\ 
\\
v_\gamma &= \,\m{1}\m{6}\m{2}\m{3}\m{4}\m{7}\m{8}\m{9}\m{10}\m{5}\m{11} % {\tt 16234789A5B}.
\end{aligned}
\end{equation}

Now we can state our first main result.
 
\begin{Thm}\label{th:localization}
For any $(p,q)$-clan $\gamma$, the localization
\begin{equation}\label{eq:clanlocThmA}
[Y_\gamma]_T|_{v_\gamma}=
\Upsilon_{\gamma}(x;y)|_{v_\gamma}= 
\s_{\gamma}(y)\cdot 
\prod_{(i,j)\in \overline{\lambda(\gamma)}}(y_{n-j+1}-y_i).
\end{equation}
where $n=p+q$ and $\overline{\lambda(\gamma)}$ is the complement of $\lambda(\gamma)$ in the $p\times q$ rectangle.
\end{Thm}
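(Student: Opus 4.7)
The plan is to prove Theorem \ref{th:localization} by induction on the closure order of $(p,q)$-clans, pairing the Wyser--Yong divided-difference recursion for $\Upsilon_\gamma$ with a parallel combinatorial recursion for the BPD-fragment sum $\s_\gamma(y)\cdot\prod_{(i,j)\in\overline{\lambda(\gamma)}}(y_{n-j+1}-y_i)$. For the base case I would take the clans corresponding to closed orbits, that is, matchless clans consisting only of signs. For such $\gamma$ the set $\CP(\gamma)$ has a single element (the Rothe BPD fragment) and $Y_\gamma$ is a Schubert cell whose only torus-fixed point is $e_{v_\gamma}$, so both sides reduce to the standard product of tangent characters at $v_\gamma$. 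A direct check shows that the weight of the Rothe fragment contributes exactly the characters indexed by $\lambda(\gamma)$, while the correction factor contributes those indexed by $\overline{\lambda(\gamma)}$, together forming the full equivariant Euler class.

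For the inductive step I would invoke the Brion--Helminck-type analysis of covers in the closure order: each cover $\gamma\lessdot\gamma'$ is realized by applying either a divided difference $\partial_i$ or an $s_i$-symmetrization to $\Upsilon_\gamma$ to produce $\Upsilon_{\gamma'}$, which localizes at $v_{\gamma'}$ to a concrete operator identity in the $y$-variables. This reduces the theorem for $\gamma'$ to the theorem for $\gamma$ once the right-hand side of \eqref{eq:clanlocThmA} is shown to satisfy the same operator identity. The crucial combinatorial input is then a lemma exhibiting, for every cover, a weight-preserving correspondence between $\CP(\gamma)$ and $\CP(\gamma')$ compatible with the simultaneous change of the Young diagram $\lambda(\gamma)\to\lambda(\gamma')$ and the rearrangement of the correction product. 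I expect this to parallel, but substantially extend, the pipe-swap analysis used in the proof of the LLS BPD formula for double Schubert polynomials, with the "nice clan" case of Section 1 serving as a sanity check via the reduction of $\s_\gamma$ to a double Schubert polynomial.

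The main obstacle is precisely this combinatorial lemma. In classical BPD theory the Young diagram is fixed by the underlying permutation and only the internal tile pattern varies; here a clan cover alters $\CP$ and $\lambda$ simultaneously, so cells can migrate between $\lambda(\gamma)$ and its complement as the move is applied. Designing a uniform "fragment-transport" operation that intertwines droop moves with each type of clan cover, and verifying that it matches the divided-difference action after localization at $v_\gamma$, will require a delicate local case analysis of the configurations of $\gamma$ around the reflected position, together with careful bookkeeping of how the correction product absorbs the linear factor produced by the divided difference. Once this matching lemma is in place, the inductive closure of the argument and the proof of the theorem are essentially formal.
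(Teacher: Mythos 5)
There is a genuine gap, and it lies exactly where you locate the ``main obstacle'': the cover-by-cover matching lemma you hope for does not exist, and without it your induction cannot close. If $\gamma'=s_i*\gamma$ is an arc-adding cover (the $i$-th and $(i{+}1)$-th nodes of $\gamma$ carry opposite signs), then $v_{\gamma'}=v_\gamma$, $\lambda(\gamma')=\lambda(\gamma)$, and the correction product $\prod_{(i,j)\in\overline{\lambda(\gamma)}}(y_{n-j+1}-y_i)$ is unchanged, yet $\deg\Upsilon_{\gamma'}=\deg\Upsilon_\gamma-1$; already for the $(1,1)$-clans, $\s_{\clan{+-}}(y)=y_1-y_2$ while $\s_{\clan{1.}}(y)=1$. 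So no weight-preserving (or correction-product-compensated) correspondence between $\CP(\gamma)$ and $\CP(\gamma')$ can exist for these covers. Worse, the localized divided-difference identity (the paper's Proposition \ref{lem:indforloc}) relates $\Upsilon_{s_i*\gamma}|_{ws_i}$ to the values of $\Upsilon_\gamma$ at the \emph{two} points $w$ and $ws_i$; for arc-adding covers neither term vanishes a priori, so knowing the theorem at $v_\gamma$ for smaller clans does not determine $\Upsilon_{\gamma'}|_{v_{\gamma'}}$. Your base case also has a factual error: for a matchless clan the closed orbit $Y_\gamma$ is not a Schubert cell with unique fixed point $v_\gamma$ but is isomorphic to $Fl_p\times Fl_q$ (with $p!\,q!$ fixed points), and the product over $\lambda(\gamma)\cup\overline{\lambda(\gamma)}$ is the Euler class of the $pq$-dimensional normal space, not the full tangent space; this part is repairable by a direct computation with $\Upsilon_\gamma=\S_{u_\gamma}(x;\cev{y})\S_{v_\gamma}(x;y)$, but it does not rescue the inductive step.

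The paper runs the induction in the opposite direction and only along the covers where the argument is clean: it inducts on the number of boxes of $\overline{\lambda(\gamma)}$, using covers $s_t*\gamma$ that add a box to $\lambda(\gamma)$. For these, $v_{s_t*\gamma}=v_\gamma s_t<v_\gamma$, so the support property $\supp(\Upsilon_\gamma)\subseteq[v_\gamma,w_0u_\gamma]$ kills one of the two localization terms and yields the one-term recursion $\Upsilon_\gamma|_{v_\gamma}=(y_{v_\gamma(t)}-y_{v_\gamma(t+1)})\,\Upsilon_{s_t*\gamma}|_{v_\gamma s_t}$, while the fragment bijection is genuinely weight-preserving (the new box is filled by a forced tile, never an empty one). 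The induction is anchored at $v_\gamma=\id$, and that anchor is where the real content sits: one intersects $Y_\gamma$ with the transversal slice $N_P^-B/B$ (Richardson's transversality), identifies $Y_\gamma\cap N_P^-B/B$ with the matrix Schubert variety $\overline{\dot{w}_0B_q^-\dot{w}_\gamma B_p}$, and concludes $\Upsilon_\gamma|_{\id}=\S_{w_\gamma}(y;\cev{y})=\s_\gamma(y)$. Your proposal contains no substitute for this geometric input, and the purely combinatorial route you sketch would in effect require reproving a BPD-type formula across arc-adding covers, which is precisely the hard step the slice argument is designed to bypass.
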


For example, for the clan $\gamma$ in  \eqref{eq:egofugammavgamma}, we have
$$[Y_{\gamma}]_T|_{v_\gamma}
=\s_{\gamma}(y)\cdot (y_6-y_2)(y_6-y_3)(y_6-y_4)(y_6-y_5)
(y_7-y_5)(y_8-y_5)(y_9-y_5)(y_{10}-y_5).$$  In Appendix \ref{appendix}, we  display the partial order on all the 24 $(2,2)$-clans under the containment of their $K$-orbit closures, and compute all the localizations of $\Upsilon_{\gamma}(x;y)$ at $v_\gamma$.
The essential step of the proof of Theorem \ref{th:localization} is Proposition \ref{prop:orbit_to_matrix_schubert}, where we relate $Y_\gamma$ with matrix Schubert varieties \cite{Fulton92,MK05}. 

Note that the localization formula \eqref{eq:clanlocThmA} is Graham positive \cite{Graham01} in the following sense 
$$
\frac{\Upsilon_{\gamma}(x;y)|_{v_\gamma}}{\prod_{(i,j)\in \overline{\lambda(\gamma)}}(y_{n-j+1}-y_i)}
=\mathfrak{s}_{\gamma}(y)\in \mathbb{Z}_{\geq 0}[y_i-y_j]_{i<j}.$$
We derive an inductive formula for the localization of $[Y_\gamma]_T$ at arbitrary fixed points (Proposition \ref{lem:indforloc}), 
but  do not observe a positive pattern as above. 
Besides  the localization in Theorem \ref{th:localization}, further properties of the localization are proved in  Proposition \ref{prop:localization}. 
For example, $v=v_\gamma$ is the smallest element (in the Bruhat order) of the support of $[Y_\gamma]_T|_v$, i.e., the permutation $v\in S_n$ with $[Y_\gamma]_T|_v=\Upsilon_\gamma(x;y)|_{v}\neq 0$. 

As an application of Theorem \ref{th:localization}, we provide an answer to the question posed by
Wyser and Yong \cite[Question 2]{WY}, which asks  a manifestly nonnegative combinatorial rule for 
the equivariant Schubert expansion of the fundamental class $[Y_{\gamma}]_T$, namely, the expansion of $\Upsilon_\gamma(x;y)$ into  $\S_w(x,y)$.
We show that the coefficients in the equivariant Schubert expansion of $\Upsilon_\gamma(x;y)$ are exactly clan polynomials.

\begin{Thm}\label{th:main}
    For any $(p,q)$-clan $\gamma$ with $p+q=n$, the coefficients $c_{\gamma,w}(y)$ in the expansion
\[\Upsilon_\gamma(x;y) = \sum_{w \in S_n} c_{\gamma,w}(y) \S_w(x;y)\]
are given by 
\begin{align}\label{eq:clanpoly}
    c_{\gamma,w}(y)=
\begin{cases}
    \s_{\gamma'}(y),\  \text{if}\  \gamma'=w*\gamma,\ v_{\gamma'}=\id\  \text{and}\ \ell(\gamma')-\ell(\gamma)=\ell(w),\\
        0,\ \qquad \text{otherwise}.\\ 
\end{cases}
\end{align}
where $w*\gamma=s_{i_1}*\cdots *s_{i_l}*\gamma$ for some reduced word $(i_1,\ldots,i_l)$ of $w$.
\end{Thm}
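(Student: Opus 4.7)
The plan is to induct on $\ell(w)$, using the base case $w = \id$ together with Wyser--Yong's divided-difference recursion for $\Upsilon_\gamma$ to handle higher-length cases.

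\textbf{Base case ($w = \id$).} Setting $x_i = y_i$ in $\Upsilon_\gamma(x;y) = \sum_w c_{\gamma,w}(y)\S_w(x;y)$ and using the standard vanishing $\S_w(y;y) = \delta_{w,\id}$ yields $c_{\gamma,\id}(y) = \Upsilon_\gamma(y;y) = \Upsilon_\gamma|_{\id}$. If $v_\gamma = \id$, then $\lambda(\gamma)$ fills the $p\times q$ rectangle, the product in \eqref{eq:clanlocThmA} is empty, and Theorem \ref{th:localization} gives $c_{\gamma,\id}(y) = \s_\gamma(y)$. If $v_\gamma \neq \id$, the identity lies strictly below $v_\gamma$ in Bruhat order, and by Proposition \ref{prop:localization} (which shows $v_\gamma$ is minimal in the support of $[Y_\gamma]_T$), $\Upsilon_\gamma|_{\id} = 0$. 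Both outcomes agree with the theorem at $w = \id$.

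\textbf{Inductive step.} Fix $\gamma$ and $w$ with $\ell(w) \geq 1$, choose a right descent $i$ of $w$, and set $w' := ws_i$. Apply $\partial_i^x$ to both sides of the expansion. Using $\partial_i^x \S_u = \S_{u s_i}$ when $u s_i < u$ and $0$ otherwise, the right-hand side becomes $\sum_{u:\,us_i<u} c_{\gamma,u}(y)\,\S_{us_i}(x;y)$. The Wyser--Yong recursion gives
\[
\partial_i^x \Upsilon_\gamma \;=\;
\begin{cases}
\Upsilon_{s_i * \gamma}, & \text{if } s_i * \gamma \neq \gamma\ (\text{so } \ell(s_i*\gamma) = \ell(\gamma)+1), \\
0, & \text{if } s_i * \gamma = \gamma.
\end{cases}
\]
In the stabilizing case, linear independence of Schubert polynomials forces $c_{\gamma,u} = 0$ for every $u$ with $u s_i < u$; in particular $c_{\gamma,w} = 0$, matching the theorem since then $w*\gamma = w'*\gamma$ satisfies $\ell(w*\gamma) \leq \ell(\gamma) + \ell(w') < \ell(\gamma) + \ell(w)$, violating condition (b). In the length-raising case, extracting the coefficient of $\S_{w'}$ from both sides yields $c_{\gamma,w}(y) = c_{s_i*\gamma,\,w'}(y)$. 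Applying the inductive hypothesis to $(s_i*\gamma, w')$, and using the 0-Hecke identity $w'*(s_i*\gamma) = w*\gamma$ (valid because $w = w's_i$ is a reduced product) together with $\ell(s_i*\gamma) = \ell(\gamma)+1$, the inductive conditions $v_{w'*(s_i*\gamma)} = \id$ and $\ell(w'*(s_i*\gamma)) - \ell(s_i*\gamma) = \ell(w')$ translate exactly to the theorem's conditions $v_{w*\gamma} = \id$ and $\ell(w*\gamma) - \ell(\gamma) = \ell(w)$, completing the induction.

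\textbf{Principal obstacle.} The key non-formal input is the Wyser--Yong divided-difference recursion displayed above, together with its precise compatibility with the 0-Hecke monoid action $*$ on clans and the length function $\ell(\gamma)$ (understood as the codimension of $Y_\gamma$ in $Fl_{p+q}$). Once this recursion is secured, the remainder is essentially bookkeeping: careful translation of side-conditions across the $*$-action via length additivity in the length-raising case.
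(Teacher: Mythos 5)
Your proof is correct and is essentially the paper's argument in unrolled form: the paper applies the full divided difference $\partial_w$ to the expansion and localizes at the identity (using $\partial_u\S_w(x;y)|_{\id}=\delta_{u,w}$ together with \eqref{eq:def of poly}, Proposition \ref{prop:local at id} and Proposition \ref{prop:localization}(4)), whereas you peel off one $\partial_i$ at a time by induction on $\ell(w)$, relying on exactly the same inputs. The two versions differ only in packaging, not in substance.
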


See Section \ref{subsect} for the definitions of length function on clans and the 0-Hecke  action of permutations on clans.
For example,  consider the ($3,2$)-clan $\gamma = \clan{1.+1.}$. 
The following diagram illustrates all clans of the form $\gamma'=w*\gamma$ for $w\in S_n$ and $\ell(\gamma')-\ell(\gamma)=\ell(w)$, where 
the framed are those with $v_{\gamma'}=\mathsf{id}$. 
$$
\def\myclan#1{{\hspace{-1pc}
    \begin{matrix}\\[-1pc]
    \clan{#1}\vphantom{\dfrac12}\\\end{matrix}
    \hspace{-1pc}}}
    \def\fmyclan#1{{\hspace{-.5pc}\begin{array}{|c|}\hline\\[-1pc]
    \hspace{-.5pc}\rule{0pc}{1.8pc}
    \clan{#1}\hspace{-.5pc}
    \\\hline\end{array}\hspace{-.5pc}}}
    \begin{matrix}
\xymatrix@=1pc{&
\fmyclan{42+..}&\\
\myclan{41.+.}\ar[ur]^3 &
\fmyclan{33+..}\ar[u]^{1,4} &
\fmyclan{4+1..}\ar[ul]_2 \\
\myclan{23.+.} \ar[u]_1\ar[ur]_3&&
\fmyclan{3+2..} \ar[ul]^2\ar[u]^4\\
\myclan{1.2+.}\ar[u]^2 &&
\myclan{2+.1.}\ar[u]^3 \\& 
\myclan{1.+1.}\ar[ul]^3\ar[ur]_2}
\end{matrix}\qquad 
    \def\myclan#1{{\hspace{-0pc}
    \begin{matrix}\\[-0.9pc]\\
    \clan{#1}\vphantom{\dfrac12}\\[-0.9pc]
    \\\end{matrix}
    \hspace{-0pc}}}
\begin{array}{ccl}\hline
w & \gamma' & \text{BPD}\\\hline
\begin{matrix}
s_1s_2s_3s_2\\
s_4s_2s_3s_2
\end{matrix}& \myclan{42+..}&
\BPD{\F\H\\\I\F\\\I\I}
\\\hline
s_2s_3s_2&\myclan{33+..}&
\BPD{\O\F\\\F\X\\\I\I}
\\\hline
s_4s_3s_2 & \myclan{4+1..}&
\BPD{\F\H\\\I\O\\\I\F}
\BPD{\O\F\\\F\J\\\I\F}
\\\hline
s_3s_2&\myclan{3+2..}&
\BPD{\O\F\\\O\I\\\F\X}\\\hline
\end{array}
$$
Then we have 
\begin{align*}
\Upsilon_{\gamma}(x;y)& = 
\mathfrak{S}_{s_1s_2s_3s_2}(x;y)+
\mathfrak{S}_{s_4s_2s_3s_2}(x;y)
+(y_1-y_5)\mathfrak{S}_{s_2s_3s_2}(x;y)\\
&\quad + (y_1+y_2-y_4-y_5)\mathfrak{S}_{s_4s_3s_2}(x;y)
 + (y_1-y_5)(y_2-y_5)\mathfrak{S}_{s_3s_2}(x;y).
\end{align*}

Note that the coefficients $c_{\gamma,w}(y)$ are manifestly positive in the sense of Graham \cite{Graham01}, i.e.,
$$c_{\gamma,w}(y)\in \mathbb{Z}_{\geq 0}[y_i-y_j]_{i<j}.$$
Actually, by the definition of clan  polynomials, each nonzero coefficient $c_{\gamma,w}(y)$ is a positive combination of squarefree monomials in $y_i-y_j$ for $0\leq i\leq p<j\leq n$. 
Recently, Graham, Jeon and Larson \cite{GJL} studied the Chern--Mather class of $Y_\gamma$, and conjectured them to be equivariantly positive. 

The non-equivariant Schubert expansion of $\Upsilon_\gamma(x;y)$ can be obtained by setting $y=0$. 
A general theorem of Brion \cite{Brion} shows that $c_{\gamma,w}(0)\in \{0,1\}$. 
Note that when $v_{\gamma}=\mathsf{id}$, 
the specialization 
$\mathfrak{s}_{\gamma}(0)$ is  equal to $1$ if and only if $\gamma$ is a rainbow. Recall that  a $(p,q)$-clan  is   called a {\it rainbow}, denoted as $\gamma_0$, if $\gamma_0$ consists of $\min\{p,q\}$  arcs matching $i$ with $p+q-i+1$ for $i\le\min\{p,q\}$  and $|p-q|$ signs in the middle. 
For example, the following diagram shows the only element in $\CP(\gamma_0)$, where $\gamma_0$ is the $(5,7)$-rainbow.
$$\BPD{
\F\H\H\H\H\H\H\\
\I\F\H\H\H\H\H\\
\I\I\F\H\H\H\H\\
\I\I\I\F\H\H\H\\
\I\I\I\I\F\H\H}\qquad 
\begin{matrix}
\clan{{11}9753--.....}
\end{matrix}$$
By Theorem~\ref{th:main}, we have
\begin{align*}
    c_{\gamma,w}(0)=
    \begin{cases}
        1, \text{ if }w*\gamma=\gamma_0\text{ and } \ell(\gamma_0)-\ell(\gamma)=\ell(w),\\
        0,\text{ otherwise.}
    \end{cases}
\end{align*}

As an application of Theorem \ref{th:main}, we realize a family of equivariant Schubert structure constants in terms of clan polynomials. 

\begin{Thm}\label{thm:mainLRcoeff}
For any non-crossing clan $\gamma$ and any $v\in S_n$, assume that
\[
\S_{u_\gamma}(x;y)\S_{v}(x;y)=\sum d_{u_\gamma,
v}^{w}(y)\S_{w}(x;y). 
\]
Then the coefficient of $\S_{w_0v_\gamma}(x;y)$  is given by 
\[
d_{u_\gamma,v}^{w_0v_\gamma}(y)=c_{\gamma,w_0v}(\cev{y}),
\]  
where $c_{\gamma,w}(y)$ is  defined in \eqref{eq:clanpoly}. 
\end{Thm}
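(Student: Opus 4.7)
The plan is to combine Theorem~\ref{th:main} with a product factorization of $\Upsilon_\gamma$ available for non-crossing clans and with equivariant Poincar\'e duality in $H_T^*(Fl_n)$.

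The principal input, which I expect to be the main obstacle, is the factorization
\begin{equation}\label{eq:factorizationplan}
\Upsilon_\gamma(x;y)=\S_{u_\gamma}(x;\cev{y})\cdot \S_{v_\gamma}(x;y),
\end{equation}
for every non-crossing $(p,q)$-clan $\gamma$. Geometrically this reflects the fact that, when $\gamma$ is non-crossing, the orbit closure $Y_\gamma$ is the Richardson variety $X_{v_\gamma}\cap X^{u_\gamma}$, whose equivariant fundamental class factors as $[Y_\gamma]_T=[X_{v_\gamma}]_T\cdot[X^{u_\gamma}]_T$; the opposite Schubert factor $[X^{u_\gamma}]_T$ is represented by $\S_{u_\gamma}(x;\cev{y})$ under the usual identification of equivariant opposite Schubert classes. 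One route to \eqref{eq:factorizationplan} is to identify $Y_\gamma$ directly with a Richardson variety via the matrix rank conditions in Proposition~\ref{prop:orbit_to_matrix_schubert} specialized to the non-crossing case. An alternative is to verify \eqref{eq:factorizationplan} fixed-point by fixed-point, using Theorem~\ref{th:localization} to control the localization at $v_\gamma$ and Proposition~\ref{lem:indforloc} at every other $T$-fixed point; the non-crossing hypothesis is what guarantees that the two sides agree at every fixed point.

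Granting \eqref{eq:factorizationplan}, the remainder is short and formal. The equivariant Schubert basis $\{\S_w(x;y)\}_{w\in S_n}$ is dual to the basis $\{\S_{w_0w}(x;\cev{y})\}_{w\in S_n}$ under the equivariant push-forward pairing:
\[\int_{Fl_n}\S_u(x;y)\cdot \S_{w_0 v}(x;\cev{y})=\delta_{u,v}.\]
Consequently every equivariant Schubert structure constant has a triple-integral description, and taking $w=w_0v_\gamma$ so that $w_0w=v_\gamma$ yields
\[d^{w_0v_\gamma}_{u_\gamma,v}(y)=\int_{Fl_n}\S_{u_\gamma}(x;y)\,\S_v(x;y)\,\S_{v_\gamma}(x;\cev{y}).\]
Swapping $y\leftrightarrow \cev{y}$ in \eqref{eq:factorizationplan} gives $\Upsilon_\gamma(x;\cev{y})=\S_{u_\gamma}(x;y)\cdot\S_{v_\gamma}(x;\cev{y})$, so the two Schubert factors depending on $u_\gamma$ and $v_\gamma$ fuse into $\Upsilon_\gamma(x;\cev{y})$ and the display above becomes
\[d^{w_0v_\gamma}_{u_\gamma,v}(y)=\int_{Fl_n}\S_v(x;y)\cdot\Upsilon_\gamma(x;\cev{y}).\]

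To finish, expand $\Upsilon_\gamma(x;\cev{y})=\sum_{w\in S_n}c_{\gamma,w}(\cev{y})\,\S_w(x;\cev{y})$ by applying Theorem~\ref{th:main} after the substitution $y\mapsto\cev{y}$, and integrate term-by-term against $\S_v(x;y)$. The duality formula forces all terms to vanish except the unique $w$ with $w_0w=v$, namely $w=w_0v$, yielding
\[d^{w_0v_\gamma}_{u_\gamma,v}(y)=c_{\gamma,w_0v}(\cev{y}),\]
as claimed. The entire substance of the proof is thus concentrated in establishing the product identity \eqref{eq:factorizationplan}; all subsequent steps are formal consequences of equivariant Poincar\'e duality and Theorem~\ref{th:main}.
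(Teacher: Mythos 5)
Your proposal is correct and takes essentially the same route as the paper: your key factorization (which need only hold as an identity of classes in $H_T^*(Fl_n)$, since everything sits under $\int_{Fl_n}$) is precisely Wyser's result that $Y_\gamma$ is the Richardson variety for non-crossing $\gamma$, i.e.\ $[Y_\gamma]_T=[X_{v_\gamma}]_T\smile[X^{w_0u_\gamma}]_T$, which the paper simply cites from \cite{Wyser}, and your remaining steps (the triple-integral expression for the structure constant via equivariant Poincar\'e duality and the $y\mapsto\cev{y}$ reversal, followed by the expansion from Theorem~\ref{th:main}) reproduce the paper's computation at the level of polynomial representatives rather than classes. In particular, there is no need to re-derive the factorization via Proposition~\ref{prop:orbit_to_matrix_schubert} or by fixed-point comparisons; citing \cite{Wyser}, as the paper does, already closes what you call the main obstacle.
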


Pechenik and Weigandt \cite{Pandt} introduced back stable clans to  give a combinatorial rule for the structure constants in  the expansion of $\S_{u}(x)\S_{v}(x)$, where $u,v$ are  any general inverse Grassmannians. It  seems plausible to extend the results in \cite{Pandt} to the equivariant version  in terms of clan polynomials.

By \cite{Brion2}, localization could be used to test smoothness. 
Since the opposite Schubert variety indexed by $w \in S_n$  is known to be smooth at the minimal element in its support, one might expect that  $Y_\gamma$  is always smooth at  $v_\gamma$. However, this is not true in general. We provide a characterization of the smoothness of  $Y_\gamma$  at the point  $v_\gamma$  in terms of pattern avoidance of clans.
We say that a clan $\gamma$ contains the pattern $\sigma$ if there are nodes in $\gamma$ such that extracting these nodes from $\gamma$ in order gives the clan $\sigma$. We say $\gamma$ avoids $\sigma$ if $\gamma$ does not contain $\sigma$. For example, the right of \eqref{eq:BPDclaneg}
contains 
$\clan{41.-.}$ and 
avoids 
$\clan{4+1..}$.

\begin{Thm}\label{th:smoothness}
For a clan $\gamma$, the following statements are equivalent:
\begin{enumerate}[\quad \rm (1)]
\item the orbit closure $Y_\gamma$ is smooth at $v_\gamma$; 
\item  $\gamma$ has exactly one BPD fragment; 
\item  $\gamma$ avoids the following 5 patterns:
\[
\clan{3+-.}, \clan{4+1..}, \clan{41.-.}, \clan{51.1..},\clan{522...}.
\]
\end{enumerate}
\end{Thm}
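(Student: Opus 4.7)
The plan is to establish the chain of equivalences (1)$\Leftrightarrow$(2)$\Leftrightarrow$(3) by combining equivariant localization with a combinatorial analysis of droop moves on BPD fragments.

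For (1)$\Leftrightarrow$(2), I invoke the standard smoothness test via equivariant localization \cite{Brion2}: a $T$-invariant subvariety $X\subset Y$ of a smooth ambient $T$-variety is smooth at an isolated $T$-fixed point $p$ if and only if $[X]_T|_p$ factors as $\prod_i\beta_i$ for distinct weights $\beta_1,\dots,\beta_{\mathrm{codim}\,X}$ of the tangent space $T_pY$. By Theorem~\ref{th:localization},
\[
[Y_\gamma]_T|_{v_\gamma}=\s_\gamma(y)\cdot\prod_{(i,j)\in\overline{\lambda(\gamma)}}(y_{n-j+1}-y_i),
\]
and the second factor is already a squarefree product of distinct tangent weights of $Fl_{p+q}$ at $v_\gamma$. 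Hence $Y_\gamma$ is smooth at $v_\gamma$ if and only if $\s_\gamma(y)$ is likewise a squarefree product of distinct tangent weights. By the definition of clan polynomials, $\s_\gamma(y)$ is a Graham-positive sum, indexed by $\pi\in\CP(\gamma)$, of squarefree monomials in the linearly independent forms $\{y_i-y_{n-j+1}:i\leq p<n-j+1\}$, so no cancellation can occur and $\s_\gamma(y)$ is a single monomial exactly when $|\CP(\gamma)|=1$.

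For (2)$\Leftrightarrow$(3), I work with droop operations on the Rothe BPD fragment of $\gamma$, i.e., the leftmost one in \eqref{eq:example of droop}. Every BPD fragment in $\CP(\gamma)$ is obtained from the Rothe one by a sequence of droops, so $|\CP(\gamma)|>1$ if and only if some droop is applicable to the Rothe fragment. For (3)$\Rightarrow$(2) I argue contrapositively: given a droop, I identify the two pipes bounding the droop rectangle, trace them back to their origins in $\gamma$ (either as arc endpoints or as a $\clan{+}$/$\clan{-}$ sign), and read off the induced subclan on the involved nodes; a direct case check on the possible pairs of pipe origins and terminations shows that this subclan must match one of the five patterns $\clan{3+-.}$, $\clan{4+1..}$, $\clan{41.-.}$, $\clan{51.1..}$, $\clan{522...}$, so $\gamma$ contains one of them. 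For (2)$\Rightarrow$(3), I verify directly that each of the five patterns admits at least two BPD fragments (obtained from its Rothe fragment by a single droop), and that whenever such a pattern occurs as a subclan of $\gamma$, the same droop can be executed inside the Rothe fragment of $\gamma$ because the surrounding tiles of $\lambda(\gamma)$ accommodate the local rectangle.

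The principal obstacle is the classification step in (3)$\Rightarrow$(2): one must show that every minimal droopable configuration in the Rothe BPD reduces, via the clan-to-BPD dictionary, to exactly one of these five subclans and no others. This requires enumerating the possibilities for the two pipes bounding a droop rectangle---each originates from an arc endpoint or from a sign, with positions constrained by the Rothe construction---and checking that any extraneous nodes trapped inside the extracted window are forced to be precisely those recorded by one of the five patterns, while any other configuration either already contains a smaller pattern or blocks the droop entirely. Careful bookkeeping based on the bijection between pipes in a BPD fragment and arcs/signs of $\gamma$ should reduce the potentially unbounded zoo of local droop pictures to exactly these five minimal obstructions.
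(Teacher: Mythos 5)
The weak link is your step (2) $\Rightarrow$ (1). The smoothness test you attribute to \cite{Brion2} is not the one that holds: it is false in general that a $T$-stable subvariety is smooth at a fixed point as soon as its equivariant class localizes to \emph{some} product of distinct ambient tangent weights. The correct criterion (the paper's Lemma \ref{lem:smoothcri}, extracted from Brion) is that $[Y_\gamma]_T|_{v_\gamma}$ must equal the product of the weights of precisely those $T$-stable curves through $v_\gamma$ that are \emph{not} contained in $Y_\gamma$. So to go from ``$\mathfrak{s}_\gamma(y)$ is a single squarefree monomial'' to smoothness you must first determine exactly which $T$-curves at $v_\gamma$ lie in $Y_\gamma$ and check that the complementary weights match the factors of $\mathfrak{s}_\gamma(y)\cdot\prod_{\overline{\lambda(\gamma)}}(y_{n-j+1}-y_i)$. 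This identification is the bulk of the paper's Section 5: Proposition \ref{prop:minimalorbitcontcurves} (the minimal orbit closure containing a given curve), Theorem \ref{thm:descofcurveon} (a curve of weight $y_{v(a)}-y_{v(b)}$ lies in $Y_\gamma$ iff both labels are on the same side of $p$, or $v(a)\le p<v(b)$ and some matching of $\gamma$ nests over $(a,b)$), which in turn rests on Wyser's characterization of containment of orbit closures (Proposition \ref{prop:wyser}) and its corollaries; one then matches the Case (II) weights with the empty tiles of the Rothe fragment having no pipe to their northwest. None of this appears in your proposal, so the ``unique fragment implies smooth'' direction is unsupported. (Your converse direction is essentially salvageable, but note that the forms $y_i-y_{n-j+1}$, $i\le p<n-j+1$, are \emph{not} linearly independent; to rule out collapsing a sum of two or more fragment weights into one squarefree product you need a positivity or specialization argument, e.g.\ setting $y_i=1$ for $i\le p$ and $y_j=0$ for $j>p$, under which the left side becomes $\pm|\CP(\gamma)|$ while any product of tangent weights becomes $0$ or $\pm1$.)

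Your treatment of (2) $\Leftrightarrow$ (3) follows the same route as the paper: reduce to droopability of the Rothe fragment via the droop-generation statement and classify the local configurations around a droopable empty tile into the five patterns. That is the right plan, but you defer the entire case analysis (the paper carries it out: the node corresponding to the row of the empty tile must be a $+$ or a left endpoint matched earlier, the node for its column a $-$ or a right endpoint matched later, giving exactly five local pictures), so as written this half is a sketch rather than a proof, and the geometric half (1) $\Leftrightarrow$ (2) has a genuine gap.
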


When $w_\gamma$ is a permutation, i.e., $p=q$ and the first $p$ (resp., last $q$) nodes are all left (resp., right) endpoints, only the last pattern $\clan{522...}$ could occur. In this case, (3) is equivalent to that $w_\gamma$ is 
132-avoiding, and  the equivalence of (2) and (3) is well known, see for example, MacDonald \cite{MacDonald}. 

Let $\cev{\gamma}$ denote the clan obtained by interchanging unmatched $\clan{+}$ and $\clan{-}$ in $\gamma$. 
As will be shown in Proposition \ref{prop:localization}, we have $\Upsilon_{\gamma}(x;y)|_{w_0u_\gamma}=w_0(\Upsilon_{\cev{\gamma}}(x;y)|_{v_{\cev{\gamma}}})$.   
Similarly, the orbit closure $Y_\gamma$ is smooth at $w_0u_\gamma$ if and only if $Y_{\cev{\gamma}}$ is smooth at $v_{\cev{\gamma}}$, i.e., $\gamma$ avoids the patterns 
\[
\clan{3-+.}, \clan{4-1..}, \clan{41.+.}, \clan{51.1..},\clan{522...}.
\] 
By \cite{Wyser}, if $\gamma$ is non-crossing (or equivalently, avoids the pattern $\clan{22..}$), then $Y_\gamma$ is the Richardson variety $R_{v_\gamma}^{w_0u_\gamma}$. 
By \cite[Lemma 3.1]{WooWyser}, a Richardson variety $R_{u}^{v}$ is smooth if and only if it is smooth at $v$ and $u$. 
Theorem \ref{th:smoothness}  recovers the smoothness characterization of  $Y_\gamma$ when $\gamma$ is non-crossing, see McGovern \cite{McGovern} and  Woo and Wyser \cite{WooWyser}. 
That is, for  a non-crossing $\gamma$,  $Y_\gamma$ is smooth if and only if $\gamma$ avoids the following patterns: 
\[
    \clan{3+-.}, \clan{3-+.}, 
    \clan{4\pm 1..}, 
    \clan{41.\pm.}, 
    \clan{51.1..}.
    %\clan{522...}.
\]

This paper is organized as follows. In Section \ref{sec:preliminaries}, we review the necessary background on $K$-orbit closures, $(p,q)$-clans and the combinatorics of Schubert polynomials.   In Section \ref{sec:localization}, we study the localization of $\Upsilon_\gamma(x;y)$. Section \ref{sec:ProofsThmABC} is devoted to proving Theorems \ref{th:localization}, \ref{th:main}, \ref{thm:mainLRcoeff}.  Finally, in Section \ref{sec:smoothness}, we study the smoothness of $Y_\gamma$ at $v_\gamma$ and prove Theorem \ref{th:smoothness}.
 
\subsection*{Acknowledgments}
The authors would like to thank 
William Graham, 
Zackary Hamaker,
Oliver Pechenik,
J\"org Sch\"urmann, 
Frank Sottile,
Changjian Su, 
Anna Weigandt, and 
Sylvester W. Zhang 
for helpful discussions and valuable comments, and thank 
Tsao-Hsien Chen, Xiaolong Pan, and 
Yaolong Shen for explaining the theory of symmetric pairs.

\section{Preliminaries}\label{sec:preliminaries}

\subsection{$K$-orbit closures}
\label{subsec:polyrepn}
Let $G=GL_n$. 
We denote 
$T$ the subgroup of diagonal matrices and 
$B$ the subgroup of upper triangular matrices. 
The classical flag variety 
$$Fl_n= \{0=V_0\subset V_1\subset\ldots\subset V_{n-1}\subset V_n=\mathbb{C}^n:\dim V_i=i\}$$
can be identified with $G/B$.
The torus fixed points of $Fl_n$ are in one-to-one correspondence to permutations in the symmetric group $S_n$. 
Precisely, for each permutation $w\in S_n$, we can construct a full flag $V^w_\bullet\in Fl_n$ given by 
$$V_i^w = V_{i-1}^w\oplus \mathbb{C}e_{w(i)},$$
where $e_1,\ldots,e_n$ are the standard basis of $\mathbb{C}^n$. 
Denote $\dot{w}\in GL_n$ by the permutation matrix of $w\in S_n$, that is, the $(w(i),i)$ entry of $\dot{w}$ equals $1$ and the other entries are $0$. 
Then we can identify $\dot{w}B/B=V_\bullet^w$ for $w\in S_n$. 
Let $B^-$ be the subgroup of lower triangular matrices. 
Denote the opposite Schubert cell and opposite Schubert variety of $w$ as
$$X_w^\circ = B^-\dot{w}B/B = \text{the $B^-$-orbit of $V^w_\bullet$},\qquad 
X_w = \overline{X_w^\circ}.$$
It is well known that $Fl_n$ can be decomposed into finite many $B^-$orbits, indexed by permutations $w\in S_n$, i.e., 
$$Fl_n = \bigsqcup_{w\in S_n} X_w^\circ.$$

Now fix $p+q=n$. 
We are going to recall an analogue of  Bruhat decomposition of $Fl_n$ into $GL_p\times GL_q$-orbits. 
Let $\theta:G\to G$ be the involution of conjugation by $\operatorname{diag}(\mathbf{1}_p,-\mathbf{1}_q)\in G$. The invariant subgroup of $\theta$ is 
$$K=G^\theta = \left[\begin{matrix}
GL_p&\\&GL_q
\end{matrix}\right]\cong GL_p\times GL_q.$$
The symmetric pair $(G,K)$ is known as type AIII in the classification; see for example \cite{Araki62}. 

For each ($p,q$)-clan $\gamma$, we construct a full flag $V_\bullet^\gamma\in Fl_n$ by  letting
$V^\gamma_i=V^\gamma_{i-1}\oplus \mathbb{C}\mathbf{v}^\gamma_i$, where 
\begin{itemize}
    \item $\mathbf{v}^\gamma_i = e_{v_\gamma(i)}+e_{v_\gamma(j)}$ if 
    the $i$-th node of $\gamma$ is a left endpoint matched with the $j$-th node; 
    \item $\mathbf{v}^\gamma_i = e_{v_\gamma(i)}$ if the 
    $i$-th node of $\gamma$ is a right endpoint or $\clan{\pm}$. 
\end{itemize}
Let $\dot{\gamma}$ denote the matrix $\big[\begin{matrix}
\mathbf{v}_1^\gamma & \mathbf{v}_2^\gamma
 & \cdots &
 \mathbf{v}_{n}^\gamma
\end{matrix}\big]$. 
In the running example \eqref{eq:egofugammavgamma}, we have 
$$
\def\m#1{\makebox[1.2pc]{$\def\arraystretch{0}\begin{array}{c}#1\end{array}$}}
\begin{aligned}
\gamma =&\,\clan{6-84+-..-+.}\\
\mathbf{v}^\gamma_i =&\, 
\m{e_1\\+\\e_8}
\m{e_6}\m{e_2\\+\\e_{11}}
\m{e_3\\+\\e_9}\m{e_4}\m{e_7}\m{e_8}\m{e_9}\m{e_{10}}\m{e_{5}}\m{e_{11}},
\end{aligned}\qquad 
\dot{\gamma}=
 \mbox{\tiny$
\left[\begin{array}{*{11}{c}}
1&0&0&0&0&0&0&0&0&0&0\\ %01
0&0&1&0&0&0&0&0&0&0&0\\ %02
0&0&0&1&0&0&0&0&0&0&0\\ %03
0&0&0&0&1&0&0&0&0&0&0\\ %04
0&0&0&0&0&0&0&0&0&1&0\\ %05
0&1&0&0&0&0&0&0&0&0&0\\ %06
0&0&0&0&0&1&0&0&0&0&0\\ %07
1&0&0&0&0&0&1&0&0&0&0\\ %08
0&0&0&1&0&0&0&1&0&0&0\\ %09
0&0&0&0&0&0&0&0&1&0&0\\ %010
0&0&1&0&0&0&0&0&0&0&1\\ %011
\end{array}\right]
 $}.
$$

Denote the $K$-orbit  of $\gamma$ as
$$Y_\gamma^\circ = K\dot{\gamma}B/B=
\text{the $K$-orbit of $V^\gamma_\bullet$}.$$
Let $Y_\gamma = \overline{Y_{\gamma}^\circ}$ denote the closure of $Y_{\gamma}^\circ$.
By \cite[Theorem 3]{Matsuki}, the flag variety can be decomposed into finitely many $K$-orbits indexed by ($p,q$)-clans
$$Fl_n=\bigsqcup_{\gamma} Y_\gamma^\circ.$$
  
\subsection{Polynomial Representatives}\label{subsect}
Now we recall the polynomial representatives (under Borel's isomorphism) of equivariant fundamental classes
$$[X_w]_T\text{ and }[Y_\gamma]_T\in H_T^*(Fl_n)$$
for permutations $w\in S_n$ and ($p,q$)-clans $\gamma$.
Note that both are well-defined since $T\subset B^-$ and $T\subset K$, respectively. 

The equivariant Schubert class $[X_w]_T\in H_T^*(Fl_n)$ can be represented by the double Schubert polynomial $\mathfrak{S}_w(x;y)$, see \cite{LS82,AF23}. 
For $w\in S_n$,  $\mathfrak{S}_w(x;y)$ is recursively defined by
\begin{align*}
\S_{w_0}(x;y)& =\prod_{i+j\le n}(x_{i}-y_{j}),
&\hspace{-2pc}& \text{if $w_0=n\cdots 21$}\\
\S_{ws_i}(x;y)& =\partial_{i}\S_{w}(x;y),
&\hspace{-2pc}&\text{ if $ws_i<w$,} %$w(i)>w(i+1)$}
\end{align*}
where 
$$\partial_{i}f=\frac{f-f|_{x_{i}\leftrightarrow x_{i+1}}}{x_i-x_{i+1}}$$
is the divided difference operator.
For any $w$, let $\partial_{w}=\partial_{i_1}\cdots \partial_{i_\ell}$ for any reduced decomposition $w=s_{i_1}\cdots s_{i_\ell}$. 
Then we have 
$$\partial_{u}\S_w(x;y) = \begin{cases}
\S_{wu^{-1}}(x;y), & \text{if} \ \ell(wu^{-1})=\ell(w)-\ell(u),\\
0, & \text{otherwise}.
\end{cases}$$
 
Next, let us 
recall a well-behaved polynomial representative of $[Y_\gamma]_T$ constructed by Wyser and Yong \cite{WY}. 
 The length of a clan is defined as
$\ell(\gamma)=\dim Y_\gamma-\frac{p(p-1)}{2}-\frac{q(q-1)}{2}$, which can also be described as 
$$
\ell(\gamma) = \sum_{(i,j)\text{-matching}} (j-i) 
- \texttt{\#}\{\text{crossings in $\gamma$} \}.$$

There is a $0$-Hecke algebra action on clans. 
Denote the partial flag variety
$$Fl_{n,i}
=\{0\subset\cdots\subset V_{i-1}\subset V_{i+1}\subset \cdots\subset \mathbb{C}^n:\dim V_i=i\},$$
and the natural projection $\pi_i:Fl_n\to Fl_{n,i}$ by forgetting the $i$-th subspace. 
For $0\leq i\leq n-1$,  
$s_i* \gamma$
is exactly the clan such that  $Y^\circ_{s_i* \gamma}$ is dense in $\pi_i^{-1}(\pi_i(Y^\circ_{\gamma}))$.
The clan $s_i*\gamma$ can be obtained as follows. 
If the $i$-th and $(i+1)$-th nodes of $\gamma$ are of opposite signs, then add an arc between them to get $s_i* \gamma$. 
Otherwise, let $\gamma'$ be the clan obtained by swapping the $i$-th and $(i+1)$-st nodes of $\gamma$. If $\ell(\gamma')>\ell(\gamma)$, then $s_i* \gamma=\gamma'$, and $s_i* \gamma=\gamma$ otherwise. 
See the following figure as an illustration. 
 
\def\mpto{\rotatebox{-90}{$\mapsto$}}
$$\begin{array}{ccc}
\begin{matrix}
\clan{\dots+-\dots}\\
\clan{\dots-+\dots}
\end{matrix}& 
\clan{\dots\pm2\dots.}&
\clan{2\dots.\pm\dots}\\[-1ex]
\mpto&\qquad\quad\mpto\hfill&\hfill\mpto\quad\qquad\\
\clan{\dots1.\dots}&
\clan{\dots3\pm\dots.}&
\clan{3\dots\pm.\dots}
\\\\% \end{array}$$
\clan{2\dots.2\dots.}&
\clan{34\dots.\dots.}&
\clan{4\dots3\dots..}\\[-1ex]
\mpto&\quad\mpto\hfill&\hfill\mpto\quad\,\,\\
\clan{3\dots3.\dots.}&
\clan{52\dots.\dots.}&
\clan{5\dots2\dots..}
\end{array}$$
We say that $s_i* \gamma $ covers $ \gamma$, denoted as  $s_i* \gamma > \gamma$, if  $\ell(s_i*\gamma)>\ell(\gamma)$. 
The transitive closure of the covering relations $s_i* \gamma > \gamma$ is called the weak Bruhat order on clans.

Given a $(p,q)$-clan $\gamma$ with $p+q=n$,  denote 
$\cev{y}$ the family of variables $(y_n,\ldots,y_2,y_1)$. 
The polynomial representative $\Upsilon_{\gamma}(x;y)$ of $[Y_\gamma]_T$ constructed by Wyser and Yong \cite{WY} is characterized by 
\begin{align*}
\Upsilon_{\gamma}(x;y)& =\S_{u_\gamma}(x;\cev{y})\cdot\S_{v_\gamma}(x;y),&\hspace{-2pc}&
\text{if $\gamma$ is matchless}\\
\Upsilon_{s_i* \gamma}(x;y) & = \partial_i \Upsilon_{\gamma}(x;y),&\hspace{-2pc}&
\text{if $s_i* \gamma> \gamma$}.
\end{align*}
Then we have 
\begin{equation}\label{eq:def of poly}
    \partial_u 
\Upsilon_\gamma(x;y)
=\begin{cases}
\Upsilon_{u*\gamma}(x;y), & \text{if} \
\ell(u*\gamma)-\ell(\gamma) =\ell(u),\\
0, & \text{otherwise}.
\end{cases}
\end{equation}

\subsection{Bumpless pipe dream and droops}
Now we generalize the droop operation in \cite[Section 5.2]{LLS18} and \cite{Weigandt2021} to BPD fragments. 
Let $\gamma$ be a $(p,q)$-clan with $p+q=n$, define the Rothe BPD fragment $D(\gamma)$ of $\gamma$ as follows. For an $(i,j)$-matching in $\gamma$, put a $\BPD{\F}$ at $(v_{\gamma}(i),n+1-v_{\gamma}(j))$, then draw a horizontal line to the right and a vertical line to the bottom of $\lambda(\gamma)$ from $(v_{\gamma}(i),n+1-v_{\gamma}(j))$. In particular, if $\lambda(\gamma)=p\times q$, i.e., $v_{\gamma}=\id$, $D(\gamma)$ is exactly the Rothe diagram of the partial permutation $w_{\gamma}$. It can be easily checked  that if $\gamma$ contains a pattern $\sigma$, then $D(\sigma)$ can be obtained from $D(\gamma)$ by deleting some rows and columns. 

Suppose that there is a rectangluar region with a $\BPD{\F}$ in the northwest corner, a $\BPD{\O}$ in the southeast corner, and no elbow tiles $\BPD{\F}$ or $\BPD{\J}$ inside. A \emph{droop} operation first changes the northwest $\BPD{\F}$ tile to a $\BPD{\O}$, and the southeast $\BPD{\O}$ tile to a $\BPD{\J}$, then draws a horizontal line to the left and a vertical line to the top at the new $\BPD{\J}$ tile until these lines reach the original pipes: 

$$\BPD{
\F\H\X\H\H\\
\I\O\I\O\O\\
\I\O\I\O\O
}\quad 
\rightarrow
\quad
\BPD{\O\O\I\O\F\\
\O\O\I\O\I\\
\F\H\X\H\J}$$

The following proposition is similar to \cite[Proposition 5.3]{LLS18}, except that  now we only allow droops inside the Young diagram $\lambda(\gamma)$. 
\begin{Prop} \label{prop:Anna BPD can be obtained by droops}
Any BPD fragment of a clan can be obtained from its Rothe BPD by applying a sequence of droops.  
\end{Prop}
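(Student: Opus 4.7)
The plan is to adapt the droop-reduction strategy of \cite[Proposition 5.3]{LLS18} (cf.~\cite{Weigandt2021}) to BPD fragments. First, I would set up a monovariant: a direct inspection of the tile changes involved in a droop shows that it creates exactly one new $\BPD{\J}$-tile (at the SE corner of the rectangle), while all other tiles modified during the droop change only among $\BPD{\F}, \BPD{\O}, \BPD{\I}, \BPD{\H}, \BPD{\X}$. Consequently $\#\BPD{\J}$ strictly decreases under each undroop, and any sequence of undroops starting from $D$ terminates in finitely many steps.

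The next step is to identify the terminal state. In any BPD fragment with no $\BPD{\J}$-tile, every pipe is forced to be a \emph{hook}: a single $\BPD{\F}$ together with straight east-going and south-going runs (through $\BPD{\H}/\BPD{\X}$ and $\BPD{\I}/\BPD{\X}$ tiles). This follows from the local incompatibility of pipe directions with the available tile sides once $\BPD{\J}$ is forbidden. A hook is determined by its two boundary endpoints, which are in turn prescribed by $\gamma$, so a $\BPD{\J}$-free BPD fragment of $\gamma$ must coincide with the Rothe BPD fragment $D(\gamma)$.

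The main technical step---and the main obstacle---is to show that whenever $\#\BPD{\J}(D) > 0$, an undroop is available. Following the permutation-case analysis of \cite{LLS18}, one chooses a $\BPD{\J}$-tile at $(r_2,c_2)$, traces the pipe through it northward and westward to the nearest elbow tiles $\BPD{\F}$ at $(r_1,c_2)$ and $(r_2,c_1)$, and verifies that the rectangle $[r_1,r_2]\times [c_1,c_2]$ is a valid undroop window (NW corner $\BPD{\O}$, no interior elbows). If this fails, an interior elbow gives rise to a strictly smaller candidate rectangle, and one inducts on the area. Because $\lambda(\gamma)$ is a Young diagram and hence closed under the rectangular span of any two of its cells, every undroop window produced this way automatically lies inside $\lambda(\gamma)$, so the permutation-BPD argument transfers without modification to the fragmentary setting. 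Combining the three steps, any BPD fragment $D$ can be reduced to $D(\gamma)$ by undroops; reversing the sequence gives the desired sequence of droops from $D(\gamma)$ to $D$.
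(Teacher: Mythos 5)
Your steps (1) and (2) are fine: a droop does create exactly one new $\BPD{\J}$-tile and destroys none, and a fragment without $\BPD{\J}$-tiles consists of hooks and hence equals $D(\gamma)$. (For what it is worth, the paper gives no proof of this proposition at all, only the pointer to \cite[Proposition 5.3]{LLS18}, so the comparison is really about whether your argument stands on its own.) The genuine gap is in your main step, the availability of an undroop whenever a $\BPD{\J}$-tile is present. As written, the fallback ``an interior elbow gives rise to a strictly smaller candidate rectangle, and one inducts on the area'' is not an argument: an interior $\BPD{\F}$-elbow belongs to another pipe and cannot be undrooped, so it yields no candidate rectangle at all, and if the interior elbow is a $\BPD{\J}$, the window traced from it need not be contained in the original one (its western $\BPD{\F}$ may lie outside), so the area need not decrease. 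Worse, the claim you are proving is false for arbitrary six-tile tilings of $\lambda(\gamma)$ with the connectivity prescribed by $\gamma$: it depends on BPD fragments being \emph{reduced} (two pipes cross at most once), a hypothesis your proof never invokes. Concretely, for the $(3,4)$-clan $\gamma=\clan{63+-.-.}$, so that $\lambda(\gamma)$ is the full $3\times 4$ rectangle, the tiling $\BPD[0.7pc]{\O\O\O\F\\\O\O\F\X\\\F\H\X\J}$ joins exactly the pairs of boundary steps prescribed by $\gamma$, has a unique $\BPD{\J}$-tile, and admits no undroop whatsoever: the traced window is the whole rectangle, its interior contains the $\BPD{\F}$ of the second pipe, and there is no other $\BPD{\J}$ to fall back on. (It is indeed unreachable from $D(\gamma)$, since droops preserve pairwise crossing numbers while here the two pipes cross twice.) So reducedness must enter exactly at your step 3, and an induction that never uses it cannot close.

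The standard repair is to choose a $\BPD{\J}$-tile in the topmost occupied row, say at $(r_2,c_2)$, with nearest $\BPD{\F}$'s of its pipe at $(r_2,c_1)$ (to the west) and $(r_1,c_2)$ (to the north), and $R=[r_1,r_2]\times[c_1,c_2]$. The topmost choice excludes any other $\BPD{\J}$ in $R$; an $\BPD{\F}$ of another pipe inside $R$ would force that pipe either to turn north at a $\BPD{\J}$ in a row above $r_2$ (contradicting the choice), or to run into the $\BPD{\F}$ at $(r_1,c_2)$, or to cross the traced pipe both in the bottom row and in the right column of $R$, contradicting reducedness; the same reasoning forces $(r_1,c_1)$ to be empty. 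Hence $R$ is a valid undroop window, and since its southeast corner is the $\BPD{\J}$-tile, $R\subseteq\lambda(\gamma)$, which is the only point where the fragment setting differs from the square-grid case. With this choice, your monovariant from step (1) finishes the induction.
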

See \eqref{eq:example of droop} for an example, all BPD fragments can be obtained by applying droop operations from the Rothe BPD (the leftmost one).

\section{Localization formulas}
\label{sec:localization}

\subsection{General localization formulas}
In this subsection, we will discuss the general localization formulas. 
For a permutation $u\in S_n$, the flag $V_\bullet^u$ is a $T$-fixed point. 
Thus we can consider the localization map 
$$-|_u : H_T^*(Fl_n)\longrightarrow 
H_T^*(\{V_\bullet^u\})\cong
H_T^*(\mathsf{pt})\cong 
\mathbb{Q}[y_1,\ldots,y_n].$$
 Combinatorially, this corresponds to specializing $x_i$ to $y_{u(i)}$ under the Borel isomorphism. 
For example, 
\begin{align*}
[X_w]_T|_u = \mathfrak{S}_w(x;y)|_{u} & := \mathfrak{S}_w(y_{u(1)},\ldots,y_{u(n)};
    y_1,\ldots,y_n)
\in \mathbb{Q}[y_1,\ldots,y_n],\\
[Y_\gamma]_T|_u=\Upsilon_\gamma(x;y)|_{u} & := \Upsilon_\gamma(y_{u(1)},\ldots,y_{u(n)};
    y_1,\ldots,y_n)
\in \mathbb{Q}[y_1,\ldots,y_n].
\end{align*}

Recall that $\cev{\gamma}$ is obtained from $\gamma$ by interchanging all unmatched $\clan{+}$ and $\clan{-}$ signs in $\gamma$.

\begin{Prop}\label{prop:localization}
For a $(p,q)$-clan $\gamma$ with $p+q=n$, the localization of $\Upsilon_{\gamma}(x;y)$ satisfies:
\begin{itemize}
\item[(1)]
For $v\in S_p\times S_q$, 
$
\Upsilon_{\gamma}(x;y)|_{vu}
=v(\Upsilon_{\gamma}(x;y)|_{u})$. 
\item[(2)] 
$
\Upsilon_{\cev{\gamma}}(x;y)|_{u}
= w_0(\Upsilon_{\gamma}(x;y)|_{w_0u})$. 
\item[(3)]
$\Upsilon_{\gamma}(x;y)|_{w_0u_\gamma}=w_0(\Upsilon_{\cev{\gamma}}(x;y)|_{v_{\cev{\gamma}}}). 
$
\item[(4)] 
$
\Upsilon_{\gamma}(x;y)|_{w} =0$ unless $v_\gamma\leq w\leq w_0u_\gamma$. 
\end{itemize}
\end{Prop}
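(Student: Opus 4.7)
The plan is to handle (1) by general equivariance, derive (2) from a cleaner polynomial identity that upgrades to the stated localization formula, obtain (3) as a specialization of (2), and prove (4) by induction along the $0$-Hecke recursion combined with geometric input.

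Part (1) I would reduce to the $W_K$-equivariance of equivariant localization. Since $Y_\gamma$ is $K$-stable, it is stable under $N_K(T)$, whose image in $W=S_n$ is $W_K=S_p\times S_q$. The class $[Y_\gamma]_T$ is therefore $W_K$-invariant under the standard $W$-action on $H_T^*(Fl_n)$ coming from the normalizer; this action permutes $T$-fixed points by $V_\bullet^u\mapsto V_\bullet^{vu}$ and acts on $H_T^*(\mathsf{pt})=\mathbb{Q}[y]$ by permuting the $y_i$, so $W_K$-invariance translates directly into $\Upsilon_\gamma(x;y)|_{vu}=v(\Upsilon_\gamma(x;y)|_u)$.

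For (2), I would first establish the polynomial identity $\Upsilon_\gamma(x;\cev y)=\Upsilon_{\cev\gamma}(x;y)$ by induction on the Wyser--Yong recursion. In the matchless base case, inspecting the labelling procedure gives $u_{\cev\gamma}=v_\gamma$ and $v_{\cev\gamma}=u_\gamma$; substituting into $\Upsilon_\gamma(x;y)=\mathfrak{S}_{u_\gamma}(x;\cev y)\mathfrak{S}_{v_\gamma}(x;y)$ and swapping $y\leftrightarrow\cev y$ matches $\Upsilon_{\cev\gamma}(x;y)$ factor by factor. For the induction, the divided difference $\partial_i^x$ commutes with the substitution $y\leftrightarrow\cev y$, and $\cev{(s_i*\gamma)}=s_i*\cev\gamma$ because the $0$-Hecke action depends only on the arc structure of a clan and the length function is insensitive to flipping unmatched signs. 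Localizing both sides of the polynomial identity at $u$ and noting that applying $w_0$ to the $y$-variables realizes the involution $y\leftrightarrow\cev y$ converts it directly into the formula of (2). Part (3) then follows by applying (2) at $u=v_{\cev\gamma}=u_\gamma$ and applying $w_0$ to both sides.

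Part (4) is the substantive step. By (2) it suffices to prove the lower bound $\Upsilon_\gamma(x;y)|_w\ne 0 \Rightarrow w\geq v_\gamma$, since the upper bound follows by applying this to $\cev\gamma$ and invoking $v_{\cev\gamma}=u_\gamma$. I would induct on $\ell(\gamma)$. The matchless base case follows from the product form $\Upsilon_\gamma=\mathfrak{S}_{u_\gamma}(x;\cev y)\mathfrak{S}_{v_\gamma}(x;y)$, since the second factor vanishes whenever $w\not\geq v_\gamma$ by classical Schubert vanishing. For the inductive step with $s_i*\gamma>\gamma$, I would combine the geometric identification $Y_{s_i*\gamma}=\pi_i^{-1}(\pi_i(Y_\gamma))$ with the standard fact that $[Z]_T|_w\ne 0$ iff $V_\bullet^w\in Z$ for closed $T$-stable $Z$; this yields that $\Upsilon_{s_i*\gamma}|_w\ne 0$ forces $V_\bullet^w\in Y_\gamma$ or $V_\bullet^{s_iw}\in Y_\gamma$, hence by induction $w\geq v_\gamma$ or $s_iw\geq v_\gamma$. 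A short analysis of how $v_\gamma$ evolves under the $0$-Hecke action shows $v_{s_i*\gamma}\in\{v_\gamma,\,v_\gamma s_i\}$, with the drop $v_\gamma s_i<v_\gamma$ occurring precisely when $\gamma$ carries the pattern $\clan{-+}$ at $(i,i+1)$; the lifting property of Bruhat order then delivers $w\geq v_{s_i*\gamma}$ in each subcase. The main obstacle is the case-by-case bookkeeping in this last step, as one must verify the Bruhat comparison across all the possible configurations of $\gamma$ at positions $i$ and $i+1$ (two signs, a sign and an arc endpoint, or two arc endpoints).
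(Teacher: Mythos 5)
Your proposal is correct in outline, and for the substantive part (4) it takes a genuinely different route from the paper. Parts (1)--(3): the paper proves (1) purely algebraically, noting that $\Upsilon_\gamma(x;y)$ is symmetric in $y_1,\ldots,y_p$ and in $y_{p+1},\ldots,y_n$ (clear in the matchless case and preserved by the $\partial_i$, which act only on $x$); your $W_K$-equivariance argument is the geometric counterpart and is equally valid. For (2) you reprove the identity $\Upsilon_{\cev{\gamma}}(x;y)=\Upsilon_\gamma(x;\cev{y})$ by induction on the Wyser--Yong recursion, whereas the paper simply cites \cite[Proposition 2.8]{WY}; the deduction of (2) and (3) is then the same. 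For (4), the paper stays with polynomials: it invokes Knutson's support lemma $\supp(\partial_i f)\subseteq\supp(f)\cup\supp(f)s_i$ and runs an induction giving $\supp(\Upsilon_\gamma)\subseteq[v_\gamma,w_0u_\gamma]$, tracking both $v_\gamma$ and $u_\gamma$ through the same case table you would need. You instead reduce the upper bound to the lower bound via (2) (a genuine simplification over tracking $u_\gamma$ separately) and prove the lower bound geometrically from $Y_{s_i*\gamma}=\pi_i^{-1}(\pi_i(Y_\gamma))$ together with the fiber analysis; this is essentially the paper's later Corollary \ref{cor:contain3}, which the paper obtains from Wyser's description of the Bruhat order on clans (Proposition \ref{prop:wyser}) rather than by your induction.

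Two caveats you should repair. First, in the inductive step the fixed point produced in the fiber is $V^{ws_i}_\bullet$, not $V^{s_iw}_\bullet$, and, more importantly, your induction hypothesis concerns nonvanishing of localizations while the fiber argument yields membership of a fixed point in $Y_\gamma$; to pass between the two you need the nontrivial direction of your ``standard fact,'' namely that $[Z]_T|_w\neq0$ whenever $V^w_\bullet\in Z$, which holds on $Fl_n$ because all $T$-fixed points are attractive and equivariant multiplicities at attractive points are nonzero (Brion \cite{Brion2}) --- alternatively, run the induction on the membership statement itself (base case: the fixed points of the closed orbit form $(S_p\times S_q)v_\gamma$, all $\geq v_\gamma$) and use only the easy vanishing direction at the very end. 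Second, the drop $v_\gamma s_i<v_\gamma$ is not confined to a $\clan{-}$ immediately followed by a $\clan{+}$: it occurs whenever the $i$-th node is a $\clan{-}$ or a right endpoint and the $(i{+}1)$-st node is a $\clan{+}$ or a left endpoint. The statement you actually need, $v_{s_i*\gamma}=\min\{v_\gamma,v_\gamma s_i\}$ whenever $s_i*\gamma>\gamma$, is exactly what the paper's nine-row table verifies; granted that, your lifting-property argument ($u\leq w$ implies $\min(u,us_i)\leq\min(w,ws_i)$) does close every subcase.
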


\begin{proof}
Since $\S_{u_\gamma}(x;\cev{y})=\S_{u_\gamma}(x;y_n,\ldots,y_1)$ is symmetric in $y_{p+1},\ldots,y_n$  and $\S_{v_\gamma}(x;y)$ is symmetric in $y_1,\ldots,y_p$, the polynomial $\Upsilon_{\gamma}(x;y)$ is also symmetric in $y_{p+1},\ldots,y_n$ and $y_1,\ldots,y_p$, respectively. 
This proves (1).  

By \cite[Proposition 2.8]{WY}, we have $\Upsilon_{\cev{\gamma}}(x;y)=\Upsilon_{\gamma}(x,\cev{y})=\Upsilon_{\gamma}(x_1,\ldots,x_n;y_n,\ldots,y_1)$.
Thus
\[\Upsilon_{\cev{\gamma}}(x;y)|_{u}=\Upsilon_{\gamma}(y_{u(1)},\ldots,y_{u(n)};y_n\ldots,y_1)=w_0\cdot\Upsilon_{\gamma}(y_{w_0u(1)},\ldots,y_{w_0u(n)};y_1,\ldots,y_n),\]
which implies (2). 

By (2), we  have $\Upsilon_{\gamma}(x;y)|_{w_0u_\gamma}=w_0(\Upsilon_{\cev{\gamma}}(x;y)|_{u_\gamma}$), now (3) follows  from $u_\gamma=v_{\cev{\gamma}}$.  

To prove (4), let $\supp(\Upsilon_{\gamma}(x;y))
:=\{w\,|\,\Upsilon_{\gamma}(x;y)|_{w}\neq0\}$. We will show that $\supp(\Upsilon_{\gamma}(x;y))
\subseteq [v_\gamma,w_0u_\gamma]$ by induction. For a matchless $\tau$, by definition, we have
\[\Upsilon_{\tau}(x;y) = \S_{u_{\tau}}(x;\cev{y})\cdot 
    \S_{v_{\tau}}(x;y).\]
Since $\S_w(x;y)|_v=0$ unless $v\geq w$, we have $\supp(\Upsilon_{\tau}(x;y))\subseteq [v_\tau,w_0u_\tau]$. Now suppose $\gamma=s_i*\gamma'$ with $\supp(\Upsilon_{\gamma'}(x;y))\subseteq [v_{\gamma'},w_0u_{\gamma'}]$. By \cite[Proposition 2.2 (1)]{Kuntson}, for any $f\in \mathbb{Q}[x,y]$, we have 
\[\supp(\partial_if)\subseteq\supp(f)\cup\supp(f)\cdot s_i.\]
By definition, $\supp(\Upsilon_{s_{i}\gamma'})=\supp(\partial_{i}\Upsilon_{\gamma'})\subseteq[v_{\gamma'},w_0u_{\gamma'}]\cup[v_{\gamma'} s_i,w_0u_{\gamma'} s_i].$
It suffices to show that $v_{\gamma}=v_{s_i*\gamma'}=\min\{v_{\gamma'},v_{\gamma'}s_i\}$ and $u_{\gamma}=u_{s_i*\gamma'}=\min\{u_{\gamma'},u_{\gamma'}s_i\}.$ 

We only prove the case for $v_\gamma$, the analysis for $u_\gamma$ is similar. Consider the $i$-th and $(i+1)$-th position of $\gamma'$ and compare $v_{\gamma'}$ and $v_{\gamma'}s_i$ we obtain the following table. 
$$\begin{array}{c|c|c}\hline
\text{the $i$-th node of $\gamma'$} & 
\text{the ($i+1$)-th node of $\gamma'$} & 
\min(v_{\gamma'},v_{\gamma'}s_i)\\\hline
\clan{-}& \clan{+} & v_{\gamma'}s_i\\\hline
\clan{+} & \clan{-} & v_{\gamma'}\\\hline
\clan{-}& \text{left endpoint} & v_{\gamma'}s_i\\\hline
\clan{+}& \text{left endpoint} & v_{\gamma'}\\\hline
\text{right endpoint} & \clan{+} & v_{\gamma'}s_i\\\hline
\text{right endpoint} & \clan{-} & v_{\gamma'}\\\hline
\text{right endpoint} & \text{left endpoint} & v_{\gamma'}s_i\\\hline
\text{left endpoint} & \text{left endpoint} & v_{\gamma'}\\\hline
\text{right endpoint} & \text{right endpoint} & v_{\gamma'}\\\hline
\end{array}$$
\end{proof}
 
\begin{Prop}\label{lem:indforloc}
 Let $\gamma$ be a ($p,q$)-clan, and $w\in S_n$ be a permutation. We have 
$$
\Upsilon_{\gamma}(x;y)|_w = 
\Upsilon_{\gamma}(x;y)|_{ws_i}
+
\begin{cases}
(y_{w(i)}-y_{w(i+1)})\cdot\Upsilon_{s_i* \gamma}(x;y)|_{ws_i},&  \text{if} \
s_i* \gamma>\gamma,\\
0,& \text{otherwise}.
\end{cases}
$$
\end{Prop}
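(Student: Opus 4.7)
The plan is to derive the formula from the defining identity of the divided difference operator combined with the recursion \eqref{eq:def of poly} for $\Upsilon_\gamma(x;y)$ under the $0$-Hecke action.

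First, I would record the general polynomial identity valid for any $f\in\mathbb{Q}[x;y]$:
\[
f = s_i\cdot f + (x_i-x_{i+1})\,\partial_i f,
\]
where $s_i\cdot f$ denotes the polynomial obtained from $f$ by swapping $x_i$ and $x_{i+1}$. Next, I would observe two elementary facts about how the localization map $x_j\mapsto y_{w(j)}$ interacts with these operators. On the one hand, for any $g$,
\[
(s_i\cdot g)\big|_w = g\big|_{ws_i},
\]
since localizing $s_i\cdot g$ at $w$ substitutes $x_i\mapsto y_{w(i)}$, $x_{i+1}\mapsto y_{w(i+1)}$ into the swapped polynomial, which is precisely the substitution $x_i\mapsto y_{ws_i(i)}=y_{w(i+1)}$, $x_{i+1}\mapsto y_{ws_i(i+1)}=y_{w(i)}$ into the original $g$. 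On the other hand, $\partial_i f$ is $s_i$-invariant (this is immediate from $s_i(f-s_i\cdot f)=-(f-s_i\cdot f)$ and $s_i(x_i-x_{i+1})=-(x_i-x_{i+1})$), so combining with the previous observation gives
\[
(\partial_i f)\big|_w = (\partial_i f)\big|_{ws_i}.
\]

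With these in hand, the proof of the proposition is a one-line localization of the identity above. Applying $-|_w$ to $f = s_i\cdot f+(x_i-x_{i+1})\partial_i f$ yields
\[
f\big|_w = f\big|_{ws_i} + (y_{w(i)}-y_{w(i+1)})\,(\partial_i f)\big|_{ws_i}.
\]
Specializing to $f=\Upsilon_\gamma(x;y)$ and invoking \eqref{eq:def of poly}, the term $(\partial_i\Upsilon_\gamma)|_{ws_i}$ is $\Upsilon_{s_i*\gamma}(x;y)|_{ws_i}$ when $s_i*\gamma>\gamma$ and $0$ otherwise, giving exactly the claimed formula.

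Since every step reduces to a formal property of divided differences together with the defining recursion of $\Upsilon_\gamma(x;y)$, there is no substantive obstacle in this argument. The only point that requires a moment of care is the transition from $(\partial_i f)|_w$ to $(\partial_i f)|_{ws_i}$, which uses the $s_i$-symmetry of $\partial_i f$; without this observation the naive localization would produce a formally different (though equal) expression and the structural meaning of the right-hand side would be obscured.
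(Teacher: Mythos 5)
Your proof is correct and follows essentially the same route as the paper: both arguments localize the defining identity of the divided difference operator and invoke \eqref{eq:def of poly} to identify $\partial_i\Upsilon_\gamma$ with $\Upsilon_{s_i*\gamma}$ (or $0$). The only cosmetic difference is that the paper evaluates $\partial_i\Upsilon_\gamma$ directly at $ws_i$ and rearranges, while you localize at $w$ and pass to $ws_i$ via the $s_i$-invariance of $\partial_i f$ — the same computation organized slightly differently.
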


\begin{proof}
 If $s_i*\gamma>\gamma$, by definition,  we have
\begin{align*}
\left.\Upsilon_{s_{i}\ast\gamma}(x;y)\right|_{ws_{i}}
&= \left.\partial_{i}\Upsilon_{\gamma}(x;y)\right|_{ws_{i}} 
= \left.\frac{\Upsilon_{\gamma}(x;y)-\Upsilon_{\gamma}(x_{1},\ldots,x_{i+1},x_{i},\ldots,x_{n};y)}{x_{i}-x_{i+1}}\right|_{ws_{i}}\\
&= \frac{\left.\Upsilon_{\gamma}(x;y)\right|_{ws_{i}}-\left.\Upsilon_{\gamma}(x;y)\right|_{w}}{y_{w(i+1)}-y_{w(i)}}.
\end{align*}
If $s_i* \gamma =\gamma$, then  by \eqref{eq:def of poly}, $\partial_{i}\Upsilon_{\gamma}(x;y)=0$. Thus the assertion holds.
\end{proof}

By Proposition \ref{lem:indforloc}, to determine  $[Y_\gamma]_T|_w=\Upsilon_{\gamma}(x;y)|_w$ at any $w$, it suffices to determine them at a special $w$. 
We will deal with the case $w=\id$ in the next subsection. 
By Proposition \ref{prop:localization} (4), 
we have $[Y_\gamma]_T|_{\id}= 0$ unless $v_\gamma=\id$.

\subsection{Localization at $\id$}
In this subsection, we aim at finding a formula for $[Y_{\gamma}]_T|_\id=\Upsilon_{\gamma}(y;y)$ for clans $\gamma$ with $v_\gamma=\id$. 
Let 
 \[
K = \left[\begin{matrix}
     GL_{p} & 0 \\
     0 & GL_{q}
 \end{matrix}\right], \quad
 P^{-} = \left[\begin{matrix}
     GL_{p} & 0 \\
     \ast & GL_{q}
 \end{matrix}\right],\qquad 
N_P^- = \left[\begin{matrix}
     \mathbf{1}_{p} & 0 \\
     \ast & \mathbf{1}_{q}
 \end{matrix}\right],
\]
where $\ast$ denotes an arbitrary $q \times p$ matrix. 
We aim to decompose the localization at $\id$ into the following maps
\begin{equation}\label{eq:Tequivariant1}
\begin{matrix}
\xymatrix{&&&\\
H_T^*(G/B)\ar[r]^{f^*}
\ar`u[u]`[urrr]^{-|_{\id}}[rrr] & 
H_T^*(P^-B/B)\ar[r]^{g^*}% \ar@/^2pc/[rr]^{g^*}
& 
H_T^*(N_P^-B/B)\ar[r]^{h^*}& H_T^*(1\cdot B/B)}
\end{matrix}
\end{equation}
where $f:P^-B/B\hookrightarrow G/B$, $g:N_P^-B/B\hookrightarrow P^-B/B$ and $h:1\cdot B/B\to N_P^-B/B$ are inclusions. 

Let us start by studying the property of $f$. 
Note that $P^- B/B\subset G/B$ is a $K$-equivariant open neighborhood of $1\cdot B/B$. 
Thus we have
\begin{align}\label{eq:STEP1_f}
    f^*([Y_\gamma]_T)= 
    \begin{cases}
        0, & \text{if} \ Y_\gamma\cap P^-B/B=\varnothing,\\
        [Y_\gamma\cap P^-B/B]_T, 
        & \text{if} \ Y_\gamma\cap P^-B/B\neq\varnothing. 
    \end{cases}
\end{align}
Therefore, $\Upsilon_{\gamma}(x;y)|_{\id}=0$ unless $Y_\gamma\cap P^-B/B\neq \varnothing$, or equivalently, $Y^\circ_\gamma\cap P^-B/B\neq \varnothing$. 
Under the assumption $v_\gamma=\id$, we have 
$\dot{\gamma}\in N_P^-\subset P^-$ by construction. 
As a result, $Y_\gamma\cap P^-B/B\neq \varnothing$. 

Next, let us study the behavior of $g$. 
We will show that $N_P^-B/B$ is a common transversal slice for all $Y^\circ_\gamma\subset P^-B/B$.

\begin{Prop}\label{prop:transslice}
The $N_P^-$-orbit $N_P^-B/B$ is closed in $P^-B/B$ and it intersects all $Y^\circ_\gamma\subset P^-B/B$ transversally. 
\end{Prop}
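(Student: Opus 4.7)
The plan is to realize $P^-B/B$ as a product and exhibit $N_P^-B/B$ as the fiber of a projection onto a homogeneous $K$-space, which makes both the closedness and the transversality into standard facts.

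Set $B_K := K \cap B$, the Borel of $K$ consisting of block-diagonal upper-triangular matrices. First I would note that $P^- \cap B = B_K$ (since the intersection of block lower-triangular and upper-triangular matrices is block-diagonal upper-triangular), so $P^-B/B \cong P^-/B_K$. Using the Levi decomposition $P^- = N_P^- \rtimes K$ together with $N_P^- \cap K = \{1\}$, one obtains an isomorphism
\[
\Phi: P^-/B_K \xrightarrow{\;\sim\;} N_P^- \times K/B_K, \qquad nk\cdot B_K \longmapsto (n,\, kB_K),
\]
under which $N_P^-B/B$ corresponds exactly to the slice $N_P^- \times \{1\cdot B_K\}$. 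Since $K/B_K \cong Fl_p \times Fl_q$ is projective, the singleton $\{1\cdot B_K\}$ is closed, hence $N_P^- \times \{1\cdot B_K\}$ is closed in $N_P^- \times K/B_K$. This settles the closedness claim.

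For transversality, let $\varphi: P^-B/B \to K/B_K$ denote the projection onto the second factor of $\Phi$. A short computation using $k n k^{-1} \in N_P^-$ for $k\in K$, $n \in N_P^-$ shows that the $K$-action on $P^-B/B$ corresponds under $\Phi$ to $k_0\cdot(n, kB_K) = (k_0nk_0^{-1}, k_0kB_K)$, so $\varphi$ is $K$-equivariant. The key observation is then that the restriction
\[
\varphi|_{Y^\circ_\gamma}: Y^\circ_\gamma \longrightarrow K/B_K
\]
is a $K$-equivariant morphism of smooth $K$-varieties whose target is a single $K$-orbit. The infinitesimal action $\mathfrak{k}\to T_y(K/B_K)$ is surjective at every $y$, and by $K$-equivariance this forces $d(\varphi|_{Y^\circ_\gamma})_p$ to be surjective at every $p \in Y^\circ_\gamma$. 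Hence $\varphi|_{Y^\circ_\gamma}$ is a submersion.

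Finally, at any point $p \in Y^\circ_\gamma \cap N_P^-B/B$, write $T_p(N_P^-B/B) = \ker d\varphi_p$ (because $N_P^-B/B$ is the fiber of $\varphi$ over $1\cdot B_K$). By the submersion step, $d\varphi_p\bigl(T_p Y^\circ_\gamma\bigr) = T_{\varphi(p)}(K/B_K)$, so
\[
T_p(Y^\circ_\gamma) + T_p(N_P^-B/B) = T_p(P^-B/B),
\]
which is precisely transversality (the case of empty intersection being vacuous). The main technical point is the submersivity of $\varphi|_{Y^\circ_\gamma}$; everything else reduces to the Levi decomposition of $P^-$ and the fact that $K/B_K$ is projective and homogeneous.
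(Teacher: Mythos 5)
Your proof is correct, but it takes a genuinely different route from the paper. The paper identifies $P^-B/B$ with $P^-/L'$ for $L'=P^-\cap B=B_p\times B_q$ and then invokes Richardson's theorem on intersections of orbits of two subgroups (Lemma \ref{lem:Richardson}) with $G'=P^-$, $H'=N_P^-$, $K'=K$, checking that $N_P^-\cap K=\{\mathbf{1}_n\}$ is connected and that $\operatorname{Lie}(N_P^-)+\operatorname{Lie}(K)=\operatorname{Lie}(P^-)$; closedness comes from the fact that $N_P^-L'$ is a closed subgroup of $P^-$. You instead trivialize $P^-/B_K\cong N_P^-\times K/B_K$ via the Levi decomposition, exhibit $N_P^-B/B$ as the fiber of the $K$-equivariant projection $\varphi$ over $1\cdot B_K$ (so closedness is immediate --- note that projectivity of $K/B_K$ is not even needed, a point being closed in any variety), and deduce transversality from the elementary observation that a $K$-equivariant map from a $K$-orbit to the homogeneous space $K/B_K$ is submersive, so $T_pY^\circ_\gamma$ surjects onto $T_{\varphi(p)}(K/B_K)$ while $T_p(N_P^-B/B)=\ker d\varphi_p$. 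Your argument is self-contained (no appeal to \cite{Richardson92}) and even shows that each $Y^\circ_\gamma\cap N_P^-B/B$ is a fiber of a submersion, hence smooth of the expected codimension. What Richardson's lemma buys the paper beyond the statement at hand is the irreducibility of the intersections $Y^\circ_\gamma\cap N_P^-B/B$, which is reused in the proof of Lemma \ref{lem:Richardsonclosure}; your fibration argument does not by itself provide that, but irreducibility is not part of the present statement, so your proof is complete as an alternative to the paper's.
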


To this end, we need the following classical result of Richardson. 

\begin{Lemma}[Richardson \cite{Richardson92}]
\label{lem:Richardson}
Let $G'$ be a connected algebraic group, $H',K'$ and $L'$ be closed connected subgroups of $G'$, and let  $X = G'/L'$  satisfying the following conditions:
\begin{itemize}
    \item[(i)] $H'\cap K'$ is connected,
    \item[(ii)] The Lie algebras satisfy 
    $\operatorname{Lie}(H')+\operatorname{Lie}(K')=\operatorname{Lie}(G')$.
\end{itemize} 
Then for every $x\in X'$, the orbits $H'\cdot x$ intersects $K'\cdot x$ transversally, and the intersection $H'\cdot x\cap K'\cdot x$ is irreducible. 
\end{Lemma}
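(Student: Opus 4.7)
The plan is to split the lemma into a \emph{transversality} assertion, proved from condition (ii) by a direct tangent-space computation, and an \emph{irreducibility} assertion, which combines smoothness (from transversality) with a connectedness argument that uses condition (i).

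\textbf{Step 1 (Transversality).} Fix any $y \in H' \cdot x \cap K' \cdot x$. The orbit map $G' \to X$, $g \mapsto gy$, has surjective differential $\operatorname{Lie}(G') \twoheadrightarrow T_y X$ with kernel $\operatorname{Lie}(\operatorname{Stab}(y))$. Under this projection, the tangent spaces $T_y(H' \cdot x) = T_y(H' \cdot y)$ and $T_y(K' \cdot x) = T_y(K' \cdot y)$ are the images of $\operatorname{Lie}(H')$ and $\operatorname{Lie}(K')$, respectively. Condition (ii) then gives $T_y(H' \cdot x) + T_y(K' \cdot x) = T_y X$, so the scheme-theoretic intersection $I_x := H' \cdot x \cap K' \cdot x$ is smooth, and irreducibility reduces to connectedness of $I_x$.

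\textbf{Step 2 (Incidence variety and smoothness).} Without loss of generality take $x = eL'$, so $\operatorname{Stab}(x) = L'$. Form the incidence variety $E := \{(h, k) \in H' \times K' : k^{-1} h \in L'\}$ together with the natural surjection $\alpha \colon E \twoheadrightarrow I_x$, $(h, k) \mapsto hx$. Consider the auxiliary morphism $\phi \colon H' \times K' \to G'$, $(h, k) \mapsto k^{-1} h$, whose differential at $(e, e)$ is $(\xi, \eta) \mapsto \xi - \eta$, surjective by (ii); by equivariance under translation on $H'$ and $K'$, $\phi$ is smooth on all of $H' \times K'$. Hence $E = \phi^{-1}(L')$ is smooth, and $\phi(H' \times K') = K' H'$ is open (hence dense and connected) in $G'$. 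A direct coset calculation shows that each non-empty fiber of $\phi$ is a single coset of $H' \cap K'$: writing $g = k_0^{-1} h_0$ with $h_0 \in H'$ and $k_0 \in K'$, one finds $\phi^{-1}(g) = \{(kg, k) : k \in (H' \cap K') k_0\}$, which is connected by (i).

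\textbf{Step 3 (Connectedness and main obstacle).} The restriction $\phi \colon E \to L' \cap K' H'$ is smooth and surjective with geometrically connected fibers isomorphic to $H' \cap K'$. The base $L' \cap K' H'$ is a non-empty open subvariety of the connected algebraic group $L'$, hence itself connected. A smooth surjective morphism with geometrically connected fibers over a connected base has a connected total space — either via Grothendieck's connectedness theorem for smooth morphisms, or equivalently via the $\pi_0$-exact sequence for the associated Serre fibration. Hence $E$ is connected, so $I_x = \alpha(E)$ is connected, and combined with smoothness from Step 1, $I_x$ is irreducible. The main obstacle is precisely this last connectedness upgrade: because $\phi$ is not proper and its image $K' H'$ may be strictly smaller than $G'$, one cannot directly invoke the cleanest proper-morphism version of Stein factorization, and must instead carefully pass to the open subscheme $L' \cap K' H' \subset L'$ and apply the geometric-connectedness statement for smooth morphisms with geometrically connected fibers. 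It is here that hypotheses (i), (ii), and the connectedness of $L'$ all come to bear simultaneously.
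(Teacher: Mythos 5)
The paper does not actually prove this lemma: it is quoted from Richardson \cite{Richardson92} and used as a black box (only the proof of Lemma \ref{lem:Richardsonclosure} adapts Richardson's arguments), so there is no in-paper proof to compare against. Your argument is correct and self-contained, and it follows the standard route one would expect in Richardson's setting: transversality at every point of the intersection from $\operatorname{Lie}(H')+\operatorname{Lie}(K')=\operatorname{Lie}(G')$ via the orbit maps, hence smoothness of $I_x$, and then connectedness of $I_x$ via the incidence variety $E=\phi^{-1}(L')$ for $\phi(h,k)=k^{-1}h$, whose fibers are cosets of $H'\cap K'$ and whose image $L'\cap K'H'$ is a nonempty open (hence connected) subset of the connected group $L'$. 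Two small remarks: your passage from surjective differential to smoothness of $\phi$ (and from tangent-space images to orbit tangent spaces) implicitly uses characteristic zero/separability, which is harmless here since the paper works over $\mathbb{C}$, but is worth flagging since the statement of the lemma does not mention the field; and the appeal to ``Grothendieck's connectedness theorem'' or a ``Serre fibration'' is more than you need --- a smooth morphism is open, and an open surjection with connected fibers onto a connected base has connected source by the elementary clopen-decomposition argument, which closes Step 3 without any heavy machinery.
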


\begin{proof}[Proof of Proposition \ref{prop:transslice}]
In order to utilize the Lemma \ref{lem:Richardson}, set $G' = P^{-}$, $H' = N_{P}^{-}$, $K' = K$, and $L' = P^{-} \cap B$. Note that $L' = B_{p} \times B_{q}$ remains to be connected. The isomorphism
\[
   \phi: P^{-}B/B \to P^{-}/L'
 \]
restricts to isomorphisms $N^{-}_{P}B/B \cong N_{P}^{-}L'/L'$ and $KB/B \cong KL'/L'$.

Since the subgroup $N_P^-L'=L'\ltimes N_P^-$ is closed in $P^-=K'\ltimes N_P^-$, the $N_P^-$-orbit $N_P^-L'/L'$ is closed. 
The two conditions in Lemma \ref{lem:Richardson}  can be verified as follows:
\begin{itemize}
    \item[(i)] $N_{P}^{-}\cap K = \{\mathbf{1}_n\}$ is connected,
    \item[(ii)] the condition on Lie algebras holds because \begin{equation*}
        \operatorname{Lie}(N_{P}^{-})=\left[\begin{matrix}
        0 & 0\\
        \ast & 0
    \end{matrix}\right], \qquad \operatorname{Lie}(K)=\left[\begin{matrix}
        \mathfrak{gl}_{p} & \\
         & \mathfrak{gl}_{q}
    \end{matrix}\right], \qquad \operatorname{Lie}(P^-)=\left[\begin{matrix}
        \mathfrak{gl}_{p} & 0\\
        \ast & \mathfrak{gl}_{q}
    \end{matrix}\right].\qedhere\end{equation*}
\end{itemize}
\end{proof}
 
By Proposition \ref{prop:transslice}, we have 
\begin{equation}
\label{eq:STEP2_g}
g^*([Y_\gamma\cap P^-B/B]_T)
=[Y_\gamma\cap N_P^-B/B]_T
\end{equation}
by transversality.

In the last step, we need to study the variety $Y_\gamma\cap N_P^-B/B$ in $N_P^-B/B$. 
The key point is that, when $v_\gamma=\id$, the intersection $Y_\gamma\cap N_P^-B/B$ can be identified as the matrix Schubert variety for the partial permutation $w_\gamma$  determined by $w_{\gamma}(i)=n+1-j\in [q]:=\{1,2,\ldots,q\}$ for an $(i,j)$-matching in $\gamma$. 
Recall that $\dot{w}_{\gamma}$ is the $q\times p$ zero-one matrix with the $(n+1-j,i)$ entry equals $1$ and other entries equal $0$.

\begin{Prop}\label{prop:orbit_to_matrix_schubert}
For a ($p,q$)-clan $\gamma$ with $v_\gamma=\id$, the intersection 
$Y_\gamma\cap N_P^-B/B$  corresponds to the 
matrix Schubert variety $$\overline{\dot{w}_0B_q^- \dot{w}_{\gamma} B_p}$$ 
under the isomorphism 
$M_{q\times p}(\mathbb{C})\cong N_P^-\stackrel{\sim}\longrightarrow N_P^-B/B.$
\end{Prop}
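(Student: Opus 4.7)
The plan is to exploit the semidirect decomposition $P^-=N_P^-\rtimes K$ to reduce the orbit-closure intersection on $G/B$ to a familiar double-coset closure on $M_{q\times p}(\mathbb{C})$.

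First I would verify that the hypothesis $v_\gamma=\id$ forces $\dot\gamma\in N_P^-$, and identify its bottom-left block. Under $v_\gamma=\id$, every left endpoint $i$ sits in position $i\le p$ and every right endpoint $j$ in position $j>p$, so each column of $\dot\gamma$ is either $e_i$ (for $i\le p$, unmatched), $e_j$ (for $j>p$), or $e_i+e_j$ for an $(i,j)$-matching. Hence $\dot\gamma=\bigl[\begin{smallmatrix}\mathbf{1}_p&0\\ M_\gamma&\mathbf{1}_q\end{smallmatrix}\bigr]$ where $M_\gamma$ has a $1$ in row $j-p$, column $i$, for each matching $(i,j)$. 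Comparing with the definition of $\dot w_\gamma$ (the $1$ at $(n+1-j,\,i)=(q+1-(j-p),\,i)$), we get $M_\gamma=\dot w_0\,\dot w_\gamma$ where $\dot w_0$ is the longest element of $S_q$.

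Second, I would use $P^-=N_P^-\rtimes K$ and $P^-\cap B=B_p\times B_q\subset K$ to give the $K$-equivariant isomorphism
\[
  N_P^-\times K/(B_p\times B_q)\;\xrightarrow{\sim}\;P^-B/B,\qquad (n,\,k(B_p\times B_q))\mapsto nkB,
\]
with $K$-action $k'\cdot(n,k)=(k'nk'^{-1},\,k'k)$ and slice $N_P^-B/B$ corresponding to the fiber over the identity coset. The orbit $Y_\gamma^\circ=K\cdot(\dot\gamma,\,e)$ meets this fiber exactly when $k\in B_p\times B_q$. A direct conjugation
\[
  \begin{bmatrix}A&0\\0&D\end{bmatrix}\begin{bmatrix}\mathbf{1}_p&0\\ M_\gamma&\mathbf{1}_q\end{bmatrix}\begin{bmatrix}A^{-1}&0\\0&D^{-1}\end{bmatrix}=\begin{bmatrix}\mathbf{1}_p&0\\ DM_\gamma A^{-1}&\mathbf{1}_q\end{bmatrix}
\]
then identifies $Y_\gamma^\circ\cap N_P^-B/B$ with the $(B_q,B_p)$-double coset $B_q M_\gamma B_p=B_q\dot w_0\dot w_\gamma B_p=\dot w_0 B_q^-\dot w_\gamma B_p$ in $M_{q\times p}\cong N_P^-$.

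Finally, I need to promote this to closures. Here I would combine $K$-invariance with Proposition \ref{prop:transslice}: since $K$ acts transitively on $K/(B_p\times B_q)$, both $Y_\gamma\cap P^-B/B=K\cdot(Y_\gamma\cap N_P^-B/B)$ and $Y_\gamma^\circ\cap P^-B/B=K\cdot(Y_\gamma^\circ\cap N_P^-B/B)$. Because $N_P^-B/B$ is closed in $P^-B/B$ and meets $Y_\gamma^\circ$ transversally, the inclusion $\overline{Y_\gamma^\circ\cap N_P^-B/B}^{\,N_P^-B/B}\subseteq Y_\gamma\cap N_P^-B/B$ is equality; concretely, a limit $x\in Y_\gamma\cap N_P^-B/B$ of points $x_i=k_iy_i\in Y_\gamma^\circ$ with $y_i\in Y_\gamma^\circ\cap N_P^-B/B$ satisfies $k_i\to k\in B_p\times B_q$ by continuity of $\pi$, so $k^{-1}x$ is a limit of $y_i$'s in $N_P^-$, and $(B_p\times B_q)$-invariance of the fiberwise orbit propagates this to $x$ itself. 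Taking closures of the identification in step two yields the stated equality $Y_\gamma\cap N_P^-B/B=\overline{\dot w_0 B_q^-\dot w_\gamma B_p}$. The one point that requires care is the closure step: without transversality from Proposition \ref{prop:transslice}, the scheme-theoretic intersection with the fiber could pick up extra nilpotent components or fail to realize the closure of the fiberwise orbit, so that lemma is the crux of the argument.
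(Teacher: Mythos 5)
Your treatment of the open orbit is essentially the paper's computation in different clothing: writing $\dot\gamma=\bigl[\begin{smallmatrix}\mathbf{1}_p&0\\ \dot{w}_0\dot{w}_\gamma&\mathbf{1}_q\end{smallmatrix}\bigr]$ and conjugating by $\operatorname{diag}(A,D)\in B_p\times B_q$ gives exactly the paper's conclusion (there obtained by multiplying a general orbit representative on the right by an element of $B$), namely $Y_\gamma^\circ\cap N_P^-B/B\cong B_q\dot{w}_0\dot{w}_\gamma B_p=\dot{w}_0B_q^-\dot{w}_\gamma B_p$; your explicit check that $v_\gamma=\id$ forces $\dot\gamma\in N_P^-$ with lower-left block $\dot{w}_0\dot{w}_\gamma$ is a detail the paper merely asserts. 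Where you genuinely diverge is the passage to closures: the paper invokes Lemma \ref{lem:Richardsonclosure}, whose proof needs finiteness of $K$-orbits, the irreducibility from Richardson's Lemma \ref{lem:Richardson}, and a dimension count resting on the transversality of Proposition \ref{prop:transslice}; you instead exploit the $K$-equivariant product structure $P^-B/B\cong N_P^-\times K/(B_p\times B_q)$, under which $N_P^-B/B$ is the fiber over the base point of a $K$-equivariant fibration, so the fiber of $\overline{Y_\gamma^\circ}$ is the closure of the fiber of $Y_\gamma^\circ$. That route is sound and, in this special situation, more elementary than the paper's general slice lemma, precisely because the slice here is globally a fiber over the homogeneous base $K/(B_p\times B_q)$. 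Two repairs are needed, though. First, the assertion that ``$k_i\to k\in B_p\times B_q$ by continuity of $\pi$'' is not correct as written: the representatives $k_i\in K$ are only determined up to the stabilizer and need not converge; what converges is the coset $\pi(x_i)$ in $K/(B_p\times B_q)$, and you should re-choose representatives $\tilde{k}_i$ through a local section of $K\to K/(B_p\times B_q)$ at the identity coset, so that $\tilde{k}_i\to e$ and $\tilde{k}_i^{-1}\cdot x_i\in Y_\gamma^\circ\cap N_P^-B/B$ converges to $x$. Second, your closing claim that Proposition \ref{prop:transslice} is ``the crux'' misdescribes your own argument: once the limit step is fixed as above, it uses only closedness of the fiber and the product structure, not transversality; transversality is the crux of the paper's Lemma \ref{lem:Richardsonclosure}, which your fibration argument effectively replaces.
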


The following lemma should be known to experts, but we do not find a proper reference. 

\begin{Lemma}\label{lem:Richardsonclosure}
Under the assumption of Lemma \ref{lem:Richardson}, assume further that $G'/L'$ has only finitely many $K'$-orbit. For a closed $H'$-orbit $\mathbb{O}$, we have
$$\mathbb{O}\cap \overline{K'\cdot y}=
\overline{\mathbb{O}\cap K'\cdot y}. $$
\end{Lemma}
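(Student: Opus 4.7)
The inclusion $\overline{\mathbb{O}\cap K'\cdot y}\subseteq \mathbb{O}\cap\overline{K'\cdot y}$ is immediate from the closedness of $\mathbb{O}$ and $\overline{K'\cdot y}$. The content of the lemma lies in the reverse inclusion, which I would establish by a transverse slice argument based on Lemma \ref{lem:Richardson}.

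First, I would use the finiteness hypothesis to decompose $\overline{K'\cdot y} = K'\cdot y \sqcup \bigsqcup_i K'\cdot y_i$ into finitely many $K'$-orbits, with each $K'\cdot y_i$ strictly below $K'\cdot y$ in the closure order. This yields the stratification
\[\mathbb{O}\cap \overline{K'\cdot y} \;=\; (\mathbb{O}\cap K'\cdot y)\;\sqcup\;\bigsqcup_i (\mathbb{O}\cap K'\cdot y_i),\]
and reduces the claim to showing $\mathbb{O}\cap K'\cdot y_i \subseteq \overline{\mathbb{O}\cap K'\cdot y}$ for every index $i$ with $\mathbb{O}\cap K'\cdot y_i\neq\varnothing$.

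The main step is local. Fix such an $i$ and a point $x\in\mathbb{O}\cap K'\cdot y_i$. By Lemma \ref{lem:Richardson}, $\mathbb{O}$ and $K'\cdot y_i$ meet transversally at $x$, so the natural map $T_x\mathbb{O}\to N:=T_x(G'/L')/T_x(K'\cdot y_i)$ is surjective. Using smoothness of $\mathbb{O}$, I would choose a smooth locally closed subvariety $\Sigma\subseteq\mathbb{O}$ through $x$ whose tangent space $T_x\Sigma$ maps isomorphically onto $N$; then $\Sigma$ is a transverse slice to $K'\cdot y_i$ in $G'/L'$ at $x$. The action map $\mu\colon K'\times\Sigma\to G'/L'$ is then smooth at $(1,x)$, since its differential there has image $T_x(K'\cdot y_i)+T_x\Sigma=T_x(G'/L')$. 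Hence the image of $\mu$ contains an open neighborhood $V$ of $x$ in $G'/L'$. Because $x\in\overline{K'\cdot y}$, $V\cap K'\cdot y\neq\varnothing$; pulling back through the smooth map $\mu$ produces points of $\Sigma\cap K'\cdot y$ arbitrarily close to $x$. Since $\Sigma\subseteq\mathbb{O}$, these points lie in $\mathbb{O}\cap K'\cdot y$, forcing $x\in \overline{\mathbb{O}\cap K'\cdot y}$, as required.

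The main subtlety is producing the slice $\Sigma$ inside $\mathbb{O}$ realizing the normal direction $N$; here Richardson's transversality is essential, whereas the finiteness hypothesis merely ensures a finite orbit stratification to stratify against. A pleasant byproduct of the argument is that $\mathbb{O}\cap\overline{K'\cdot y}\neq\varnothing$ already forces $\mathbb{O}\cap K'\cdot y\neq\varnothing$, so no degenerate case requires separate treatment.
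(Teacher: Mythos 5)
Your argument is correct, but it runs on a different mechanism than the paper's. The paper's proof is a global dimension count: after the same finite orbit decomposition $\overline{K'\cdot y}=K'\cdot y_0\cup\cdots\cup K'\cdot y_m$, it uses \emph{both} conclusions of Lemma \ref{lem:Richardson} --- transversality \emph{and} irreducibility of each $\mathbb{O}\cap K'\cdot y_i$ --- so that the irreducible components of $\mathbb{O}\cap\overline{K'\cdot y}$ are among the closures $\overline{\mathbb{O}\cap K'\cdot y_i}$; the general lower bound $\dim Z\geq \dim\mathbb{O}+\dim K'\cdot y-\dim (G'/L')$ for any component $Z$ of the intersection with the closed set $\overline{K'\cdot y}$ then excludes every $i\geq 1$, because transversality forces $\dim(\mathbb{O}\cap K'\cdot y_i)=\dim\mathbb{O}+\dim K'\cdot y_i-\dim(G'/L')$, which is strictly smaller. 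You instead argue locally at each point $x$ of a boundary stratum, using only the infinitesimal transversality (condition (ii)) to build a slice $\Sigma\subseteq\mathbb{O}$ and to make $\mu\colon K'\times\Sigma\to G'/L'$ smooth, hence open, near $(1,x)$; this yields the pointwise conclusion $x\in\overline{\mathbb{O}\cap K'\cdot y}$ without invoking irreducibility or the intersection-dimension inequality, and it is essentially the slice argument behind the Whitney-regularity generalization mentioned in the paper's remark following the lemma. One step should be tightened: ``points of $\Sigma\cap K'\cdot y$ arbitrarily close to $x$'' is not literally a Zariski statement; the clean finish is to note that for the open map $\mu$ one has $\mu^{-1}(\overline{K'\cdot y})\subseteq\overline{\mu^{-1}(K'\cdot y)}$, while $\mu^{-1}(K'\cdot y)=K'\times(\Sigma\cap K'\cdot y)$ since $K'\cdot y$ is $K'$-stable, so $(1,x)\in K'\times\overline{\Sigma\cap K'\cdot y}$ and hence $x\in\overline{\mathbb{O}\cap K'\cdot y}$ (alternatively, pass to the analytic topology, where closures of these locally closed sets agree).
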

\begin{proof}
The argument is adapted from the proof of \cite[Theorem 2.1]{Richardson92}. 
The inclusion ``$\supseteq$'' is clear. 
Since there are only finitely many $K'$-orbits, we find
$$\overline{K'\cdot y}= K'\cdot y_0\cup 
\cdots \cup K'\cdot y_m$$
where $y_0=y$, and $\dim K'\cdot y_i<\dim K'\cdot y$ for $1\le i\le m.$
So 
\begin{align*}
\mathbb{O}\cap \overline{K'\cdot y}& = (\mathbb{O}\cap K'\cdot y_0)\cup 
\cdots \cup (\mathbb{O}\cap K'\cdot y_m)\\
& = \overline{\mathbb{O}\cap K'\cdot y_0}\cup \cdots \cup \overline{\mathbb{O}\cap K'\cdot y_m}. 
\end{align*}
By Lemma \ref{lem:Richardson}, each intersection $\mathbb{O}\cap K'\cdot y_i$ is irreducible. 
So the irreducible components of $\mathbb{O}\cap \overline{K'\cdot y}$ are the maximal (with respect to inclusion) members among $\overline{\mathbb{O}\cap K'\cdot y_i}$ for $i=0,\ldots,m$. 
Note that any irreducible component $Z\subset \mathbb{O}\cap \overline{K'\cdot y}$ satisfies 
$$\dim Z\geq \dim \mathbb{O}+\dim K'\cdot y-\dim X.$$
But by transversality, $\overline{\mathbb{O}\cap K'\cdot y}$ is the only member with this property. So we have $\mathbb{O}\cap \overline{K'\cdot y}=
\overline{\mathbb{O}\cap K'\cdot y}$. 
\end{proof}

\begin{Rmk}
J\"org Sch\"urmann shared with us in private communication that Lemma \ref{lem:Richardsonclosure} can be generalized to Whitney b-regular stratifications 
\cite[Remark 4.3.6]{Sch}; see also \cite[Example 4.3.4]{Sch}. 
\end{Rmk}

\begin{proof}[Proof of Proposition \ref{prop:orbit_to_matrix_schubert}]
Thanks to Lemma \ref{lem:Richardsonclosure}, it suffices to prove that  
\[
Y^\circ_{\gamma} \cap N_{P}^{-}B/B
\]  
corresponds to the open matrix Schubert variety $B_{q} \dot{w}_0\dot{w}_{\gamma} B_{p}=\dot{w}_0B_q^-\dot{w}_{\gamma}B_p$. 
Recall that we can write $\dot{\gamma}$ in the following way 
\[
\dot{\gamma}=\left[\begin{matrix}
    \mathbf{1}_{p} & 0 \\
    \dot{w}_0\dot{w}_{\gamma} & \mathbf{1}_{q}
\end{matrix}\right]\in N_P^-.\]
Each element in the $K$-orbit  $Y_{\gamma}^\circ$   looks like
\[
\left[\begin{matrix}
    A & 0 \\
    0 & C
\end{matrix}\right]
\left[\begin{matrix}
    \mathbf{1}_{p} & 0 \\    \dot{w}_0\dot{w}_{\gamma} & \mathbf{1}_{q}
\end{matrix}\right] B / B =
\left[\begin{matrix}
    A & 0 \\
    C\dot{w}_0\dot{w}_{\gamma} & C
\end{matrix}\right] B / B,
\]  
where  $A \in GL_{p}$ and $C \in GL_{q}$.
Suppose that $Y^\circ_{\gamma} \cap N_{P}^{-}B/B\neq\emptyset$. Then there exist upper triangular matrices $E \in B_{p}$, $G \in B_{q}$, and $F \in M_{p \times q}$ such that  
\[
\left[\begin{matrix}
    A & 0 \\
    C\dot{w}_0\dot{w}_{\gamma} & C
\end{matrix}\right]
\left[\begin{matrix}
    E & F \\
    0 & G
\end{matrix}\right] =
\left[\begin{matrix}
    AE & AF \\
    C\dot{w}_0\dot{w}_{\gamma}E & C\dot{w}_0\dot{w}_{\gamma} F + CG
\end{matrix}\right]=
\left[\begin{matrix}
    \mathbf{1}_{p} & 0 \\
    \ast & \mathbf{1}_{q}
\end{matrix}\right].
\]  
Thus $A \in B_{p}$, $F = 0$, and $C \in B_{q}$. This implies  
\[Y^\circ_{\gamma} \cap N_{P}^{-}B/B = 
\left[\begin{matrix}
    \mathbf{1}_{p} & 0 \\
    B_{q} \dot{w}_0\dot{w}_{\gamma} B_{p} & \mathbf{1}_{q}
\end{matrix}\right] B / B.
\]  
Taking the lower left $q\times p$ submatrix,  we can identify $Y^\circ_\gamma\cap N_P^-B/B$ with the open matrix Schubert variety $B_{q} \dot{w}_0\dot{w}_{\gamma} B_{p}$. Since $\dot{w}_0B_q\dot{w}_0=B_q^-$, we have $B_{q} \dot{w}_0\dot{w}_{\gamma} B_{p}=\dot{w}_0B_q^-\dot{w}_{\gamma}B_p$.
\end{proof}

By \cite{Fulton92,MK05}, the polynomial representive of the matrix Schubert variety $\overline{\dot{w}_0B^-_{q} \dot{w}_{\gamma} B_{p}}$ is $\S_{w_{\gamma}}(x;\cev{y})$. 
By Proposition \ref{prop:orbit_to_matrix_schubert}, we have the following commutative diagram: 
\begin{equation}
\begin{matrix}
\xymatrix{
  H_{T_q\times T_p}^*(M_{q\times p}(\mathbb{C})) \ar[d]^{\wr} \ar[r]^-{\sim} & H_{T_q\times T_p}^*(\mathsf{pt})
  \ar[d]^{\wr}\\
   H_T^*(N_P^-B/B)\ar[r]^-{h^*} & H_T^*(\mathsf{pt})}
\end{matrix}
\end{equation}
As a result, we have 
\[h^*([Y_\gamma\cap N_P^-B/B]_T) =\S_{w_{\gamma}}(x;\cev{y})|_{\id}= \mathfrak{S}_{w_\gamma}(y,\cev{y}).\]
Since $\S_{w_{\gamma}}(x_1,\ldots,x_p;y_1,\ldots,y_q)=\s_{\gamma}(x_1,\ldots,x_p,y_q,\ldots,y_1),$ we have 
\[\S_{w_{\gamma}}(y;\cev{y})=\S_{w_{\gamma}}(y_1,\ldots,y_p;y_{p+q},\ldots,y_{p+1})=\s_{\gamma}(y_1,\ldots,y_p,y_{p+1},\ldots,y_{p+q})=\s_\gamma(y).\]
Consequently, \begin{equation}
\label{eq:STEP3_h}
h^*([Y_\gamma\cap N_P^-B/B]_T) = \mathfrak{S}_{w_\gamma}(y,\cev{y})=\s_\gamma(y).
\end{equation}

Combining 
\eqref{eq:STEP1_f},
\eqref{eq:STEP2_g},
\eqref{eq:STEP3_h}, we get the following. 

\begin{Prop}\label{prop:local at id}
For a ($p,q$)-clan $\gamma$ with $v_\gamma=\id$, we have 
$$[Y_\gamma]_T|_{\id} = \Upsilon_\gamma(x,y)|_{\id} = \s_{\gamma}(y).$$
\end{Prop}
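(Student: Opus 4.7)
The plan is to identify the localization $-|_{\id}$ with the composition $h^*\circ g^*\circ f^*$ in diagram \eqref{eq:Tequivariant1} and then apply, in turn, the three ingredients already developed in this section, namely \eqref{eq:STEP1_f}, Proposition \ref{prop:transslice}, and Proposition \ref{prop:orbit_to_matrix_schubert}. Since $P^-B/B$ is open in $G/B$ and $1\cdot B/B$ is a $T$-fixed point contained in $P^-B/B\supset N_P^-B/B$, the factorization of $-|_{\id}$ through these inclusions is automatic from naturality of equivariant pullback.

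The first step is to push $[Y_\gamma]_T$ across $f$. Because $v_\gamma=\id$, the matrix $\dot\gamma$ lies in $N_P^-\subset P^-$, so the $K$-orbit $Y_\gamma^\circ$ meets the open set $P^-B/B$, and \eqref{eq:STEP1_f} yields $f^*[Y_\gamma]_T=[Y_\gamma\cap P^-B/B]_T$. The second step applies Proposition \ref{prop:transslice}: $N_P^-B/B$ is a closed $N_P^-$-orbit inside $P^-B/B$ which meets every $K$-orbit transversally, and by Lemma \ref{lem:Richardsonclosure} this transversality descends to closures, giving $g^*[Y_\gamma\cap P^-B/B]_T=[Y_\gamma\cap N_P^-B/B]_T$ as in \eqref{eq:STEP2_g}.

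The third step identifies the resulting class. Under the $T$-equivariant isomorphism $M_{q\times p}(\mathbb{C})\cong N_P^-\stackrel{\sim}{\to}N_P^-B/B$, Proposition \ref{prop:orbit_to_matrix_schubert} shows that $Y_\gamma\cap N_P^-B/B$ corresponds to the matrix Schubert variety $\overline{\dot{w}_0 B_q^-\dot{w}_\gamma B_p}$. By the Fulton/Knutson--Miller theorem \cite{Fulton92,MK05}, its equivariant class is represented by $\mathfrak{S}_{w_\gamma}(x;\cev{y})$, and the pullback $h^*$ at the origin sets $x_i\mapsto y_i$. Combining these, and using the straightforward identity $\mathfrak{S}_{w_\gamma}(y;\cev{y})=\s_\gamma(y)$ recorded just before \eqref{eq:STEP3_h}, we obtain $[Y_\gamma]_T|_{\id}=\s_\gamma(y)$.

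All of the substantive geometric content (the transversal slice description and the matrix-Schubert identification) has already been carried out, so no further obstacle remains beyond assembling the three pullbacks. The only point that warrants a brief check is that the factorization $-|_{\id}=h^*\circ g^*\circ f^*$ is legitimate, which follows from the fact that $1\cdot B/B$ is a single $T$-fixed point lying in the open subset $P^-B/B$ and in the closed subvariety $N_P^-B/B$, so each pullback commutes with the inclusion of the fixed point into the ambient space.
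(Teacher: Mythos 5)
Your proposal is correct and follows exactly the paper's own route: the paper proves this proposition precisely by combining \eqref{eq:STEP1_f}, \eqref{eq:STEP2_g}, and \eqref{eq:STEP3_h} along the factorization \eqref{eq:Tequivariant1}, which is what you do. The only addition you make is the brief justification that $-|_{\id}=h^*\circ g^*\circ f^*$, which the paper leaves implicit and which you verify correctly.
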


\section{Proofs of Theorems  \ref{th:localization},
\ref{th:main} and 
\ref{thm:mainLRcoeff}}
\label{sec:ProofsThmABC}

\subsection{Proof of Theorem \ref{th:localization}}

Recall the process of reading off the underlying clan from a BPD fragment on the Young diagram $\lambda(\gamma)$. We pull straight the southeast boundary of $\lambda(\gamma)$  starting from the northeast corner to the southwest corner, and replace each step by a node. If there is a pipe connecting two steps, then draw an arc connecting the corresponding two nodes. And finally replace the unmatched horizontal steps by $\clan{-}$ and vertical steps by $\clan{+}$.

\begin{proof}[Proof of Theorem \ref{th:localization}]
We proceed by induction on the number of boxes of $\overline{\lambda(\gamma)}$.  
For the base case, when $\lambda(\gamma)$ is the full $p\times q$ rectangle, we have $v_{\gamma}=\id$. This case has already been established in Proposition \ref{prop:local at id}.  

Now assume the theorem holds for all clans $\theta$ such that $\texttt{\#}\overline{\lambda(\theta)}\le k$. Suppose that there is a clan $\gamma$ such that $\texttt{\#}\overline{\lambda(\gamma)}=k+1$. Then there exists a horizontal step, say the $t$-th step, of $\lambda(\gamma)$ such that the $(t+1)$-th step is a vertical step. Swapping the $t$-th step and  $(t+1)$-th step of $\lambda(\gamma)$ to obtain $\lambda(s_t*\gamma)$.
Then $s_t*\gamma>\gamma$ and   $\lambda(s_t*\gamma)$ is obtained from $\lambda(\gamma)$ by adding a new box $A$ at the position $(v_{\gamma}(t+1),\, n+1-v_{\gamma}(t))$. Clearly, $\texttt{\#}\overline{\lambda(s_t*\gamma)}=k$.
We are going to establish a bijection between $\CP(\gamma)$ and $\CP(s_t*\gamma)$. 
There are four cases. 
\begin{itemize}

\item If the $t$-th node of $\gamma$ is $\clan{-}$ and the ($t+1$)-th node is $\clan{+}$, then there are no pipes going through the upper edge and left edge of the added box $A$. We fill the added box $A$ with an SE elbow $\BPD{\F}$.
$$\clan{{}\dots\dots-+\dots\dots{}}
    \xrightarrow{\ s_t*-\ }
    \clan{{}\dots\dots1.\dots\dots{}}\qquad 
    \text{e.g. }
    \BPD{\M{\dots}\M{\dots}\M{\dots}\\
    \M{\dots}\I\O\\\M{\dots}\J \M{A}}\longrightarrow \BPD{\M{\dots}\M{\dots}\M{\dots}\\
    \M{\dots}\I\O\\\M{\dots}\J\F}.$$
    
\item If the $t$-th node of $\gamma$ is a right endpoint and the ($t+1$)-th node is $\clan{+}$, then there is a pipe going through the upper edge of this added box $A$.  Prolong this pipe as a vertical pipe $\BPD{\I}$ in the added box $A$. 
    $$
    \clan{2\dots.+\dots\dots}
    \xrightarrow{\ s_t*-\ }
    \clan{3\dots+.\dots\dots}\qquad 
    \text{e.g. }
    \BPD{\M{\dots}\M{\dots}\M{\dots}\\
    \M{\dots}\F\X\\\M{\dots}\J\M{A}}\longrightarrow \BPD{\M{\dots}\M{\dots}\M{\dots}\\
    \M{\dots}\F\X\\\M{\dots}\J\I}.
    $$

\item If the $t$-th node of $\gamma$ is $\clan{-}$ and the ($t+1$)-th node is a left endpoint, 
    then there is a pipe going through the left edge of this added box $A$. Prolong this pipe as a horizontal pipe $\BPD{\H}$ in the added box $A$. 
    $$\clan{\dots\dots-2\dots.}
    \xrightarrow{\ s_t*-\ }
    \clan{\dots\dots3-\dots.}\qquad 
    \text{e.g. }
    \BPD{\M{\dots}\M{\dots}\M{\dots}\\
    \M{\dots}\O\O\\\M{\dots}\F\M{A}}\longrightarrow \BPD{\M{\dots}\M{\dots}\M{\dots}\\
    \M{\dots}\O\O\\\M{\dots}\F\H}.
    $$
\item 
    If the $t$-th node of $\gamma$ is a right endpoint and the ($t+1$)-th node is a left endpoint, then there exist two pipes, going through the upper and left edge of the box $A$, respectively. 
    Prolong these two pipes to form a cross $\BPD{\X}$ in the added box $A$.
    $$\clan{2\dots.2\dots.}
    \xrightarrow{\ s_t*-\ }
    \clan{3\dots3.\dots.}
\qquad \text{e.g. }
    \BPD{\M{\dots}\M{\dots}\M{\dots}\\
    \M{\dots}\J\F\\\M{\dots}\H\M{A}}\longrightarrow \BPD{\M{\dots}\M{\dots}\M{\dots}\\
    \M{\dots}\J\F\\\M{\dots}\H\X}.
    $$
\end{itemize}

Note that this construction gives no additional empty tiles, hence $\s_{\gamma}(y) = \s_{s_t*\gamma}(y)$.
Since $v_{s_t*\gamma}=v_\gamma s_t<v_\gamma$, we have $s_t*\gamma>\gamma$ and by Proposition \ref{prop:localization} (4) we have $\left.\Upsilon_{\gamma}(x;y)\right|_{v_{\gamma}s_{t}}=0$.
According to Proposition \ref{lem:indforloc}, we have  
\begin{equation*}
   \left.\Upsilon_{\gamma}(x;y)\right|_{v_{\gamma}}=(y_{v_{\gamma}(t)}-y_{v_{\gamma}(t+1)})\left.\Upsilon_{s_t*\gamma}(x;y)\right |_{v_{\gamma}s_t}.
\end{equation*}
By the induction hypothesis,  we derive that
\begin{align*}
     \Upsilon_{\gamma}(x;y)|_{v_{\gamma}}&= (y_{v_{\gamma}(t)}-y_{v_{\gamma}(t+1)})     \Upsilon_{s_t\ast\gamma}(x;y)|_{v_{\gamma}s_t}\\
    &=(y_{v_{\gamma}(t)}-y_{v_{\gamma}(t+1)})\s_{s_t*\gamma}(y)\cdot \prod_{(i,j)\in \overline{\lambda(s_t*\gamma)}}(y_{n-j+1}-y_i)\\
    & =\s_{\gamma}(y)\cdot \prod_{(i,j)\in \overline{\lambda(\gamma)}}(y_{n-j+1}-y_i).
    \qedhere
\end{align*}
\end{proof}

\subsection{Proof of Theorem \ref{th:main}}

According to \cite[Proposition 2.6]{WY}, for any ($p,q$)-clan $\gamma$, the polynomial $\Upsilon_{\gamma}(x;y)$ can be expressed as a linear combination of double Schubert polynomials $\mathfrak{S}_w(x;y)$ with $w \in S_n$:
\begin{equation}\label{eq:expand2schubert}
    \Upsilon_{\gamma}(x;y)=\sum_{w\in S_n}c_{\gamma,w}(y)\S_{w}(x;y).
\end{equation}

\begin{proof}[Proof of Theorem 
\ref{th:main}]
Applying divided difference operator $\partial_u$ on both sides of equation \eqref{eq:expand2schubert}, we obtain
\begin{align*}
    \partial_{u}\Upsilon_{\gamma}(x;y)=\sum_{w\in S_n}c_{\gamma,w}(y)\partial_{u}\S_{w}(x;y). 
\end{align*}
Now taking localization at the identity permutation on both sides, combining the fact that $\left.\partial_{u}\S_{w}(X;Y)\right|_{\id}=1$ if $u=w$ and 0 otherwise, we obtain
\begin{align}\label{xishuc}
c_{\gamma,w}(y) = 
\begin{cases}
\Upsilon_{w*\gamma}(x;y)|_{\id},&    \text{if} \ 
\ell(w*\gamma)-\ell(\gamma)=\ell(w),\\
0,& \text{otherwise}.
\end{cases}
\end{align}
Now Theorem \ref{th:main} follows immediately from \eqref{xishuc}, Proposition \ref{prop:local at id} and Proposition \ref{prop:localization} (4). 
\end{proof}

\subsection{Proof of Theorem \ref{thm:mainLRcoeff}}

Recall that $X_w$ is the closure of the $B^-$-orbit of $V_\bullet^w=\dot{w}B/B$. 
Let $X^w$ be the closure of the $B$-orbit of $V_\bullet^w=\dot{w}B/B$. 
\begin{proof}[Proof of Theorem \ref{thm:mainLRcoeff}]
Note that we have 
$\int_{Fl_n} [X^u]_T\smile [X_v]_T=\delta_{uv}$. 
Taking Poincar\'e pairing with $[X^{w_0w}]_T$ on both sides of 
\[[X_{u_\gamma}]_T\smile[X_{v}]_T=\sum_{w\in S_n}
d_{u_\gamma,v}^{w}(y)[X_{w}]_T,\]
we obtain 
$$d_{u_\gamma,v}^{w_0w}(y) = 
\int_{Fl_n} [X_{u_\gamma}]_T\smile [X_v]_T\smile [X^{w_0w}]_T. 
$$

On the other hand, by \cite{Wyser}, $Y_{\gamma}$ is a Richardson variety if $\gamma$ is a non-crossing clan, namely, $[Y_\gamma]_T=[X_{v_\gamma}]_T\smile[X^{w_0u_\gamma}]_T$. Expand $[Y_\gamma]_T$ into Schubert classes we have
$$[X_{v_\gamma}]_T\smile[X^{w_0u_\gamma}]_T = \sum_{w\in S_n}c_{\gamma,w}(y)[X_w]_T,$$
and 
\[c_{\gamma,w_0v}(y)=\int_{Fl_n}
[X_{v_\gamma}]_T\smile[X^{w_0u_\gamma}]_T
\smile [X^{w_0v}]_T.\]
Now reverse the order of $y$ and note that $w_0Bw_0=B^-$, we obtain 
\begin{align*}
    c_{\gamma,w_0v}(\cev{y})&=\int_{Fl_n}
[w_0X_{v_\gamma}]_T\smile[w_0X^{w_0u_\gamma}]_T\smile [w_0X^{w_0v}]_T
\\
&=\int_{Fl_n}
[X^{w_0v_\gamma}]_T\smile[X_{u_\gamma}]_T\smile [X_{v}]_T
=d_{u_\gamma,v}^{w_0v_\gamma}(y).
\qedhere 
\end{align*}
\end{proof}

\section{Smoothness via localization}\label{sec:smoothness}

It is well known that localization can be used to characterize smoothness. For our purpose, we need the following result, which 
appears in the proof of  \cite[Corollary 19]{Brion2}. We include a proof here for completeness.

\begin{Lemma}\label{lem:smoothcri}
Let $Y \subseteq G/B$ be a closed $T$-equivariant subvariety. Then $Y$ is smooth at $u \in W$ if and only if  $[Y]_T|_{u} \in H_{T}^{*}(\mathrm{pt})$ satisfies
\begin{equation}\label{eq:smoothcri}
    [Y]_T|_{u} = \prod_{(a,b)} (y_{u(a)}-y_{u(b)}),
\end{equation}
where the product goes through all $T$-stable curves not in $Y$ from $u$ with weight $y_{u(a)}-y_{u(b)}$. 
\end{Lemma}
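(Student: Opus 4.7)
The plan is to exploit the description of $T$-stable curves in $G/B$ together with Atiyah--Bott localization and a tangent-cone argument. Recall that $T_u(G/B)$ decomposes into distinct one-dimensional $T$-weight spaces, one for each $T$-stable curve through $u$: the curve joining $u$ to $s_{a,b}u$ carries the character $y_{u(a)}-y_{u(b)}$. In particular, $\dim(G/B)$ equals the total number of $T$-stable curves through any given fixed point.

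For the forward direction, suppose $Y$ is smooth at $u$. Then in a $T$-invariant neighborhood of $u$, the subvariety $Y$ is a smooth $T$-subvariety of $G/B$, so $T_uY\subseteq T_u(G/B)$ is a $T$-subrepresentation, hence a direct sum of weight lines. Each weight appearing in $T_uY$ integrates, via the associated one-parameter subgroup, to a $T$-stable curve through $u$ contained in $Y$, and conversely every $T$-stable curve through $u$ inside $Y$ contributes its weight to $T_uY$. Applying Atiyah--Bott localization at the smooth fixed point $u$ (equivalently, the self-intersection formula along the smooth embedding) then yields
\[
[Y]_T|_u \;=\; e_T\bigl(T_u(G/B)/T_uY\bigr) \;=\; \prod_{(a,b)}\bigl(y_{u(a)}-y_{u(b)}\bigr),
\]
where the product ranges over $T$-stable curves through $u$ not contained in $Y$.

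For the converse, assume \eqref{eq:smoothcri} holds. Comparing degrees in $H_T^*(\pt)$ shows the right-hand side has exactly $\operatorname{codim}_{G/B}(Y)$ linear factors; consequently there are exactly $\dim Y$ many $T$-stable curves of $G/B$ through $u$ that lie in $Y$. On the other hand, a classical theorem of Rossmann (extended by Brion) identifies $[Y]_T|_u$ with the $T$-equivariant multidegree of the tangent cone $TC_uY\subseteq T_u(G/B)$, which takes the form $\operatorname{mult}_u(Y)\cdot P$, where $P$ is a polynomial each of whose factors is determined by the weights of $T_u(G/B)$ not spanning $TC_uY$, counted with multiplicity. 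Because the right-hand side of \eqref{eq:smoothcri} is a squarefree product of distinct linear characters with unit leading coefficient and with the correct total degree, we are forced to have $\operatorname{mult}_u(Y)=1$ and $TC_uY$ equal to the coordinate subspace spanned by the weight lines of $T$-stable curves in $Y$ through $u$. Multiplicity one at a point of a subvariety of a smooth ambient variety is equivalent to smoothness, completing the argument.

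The main obstacle lies in the converse: the decisive input is the Rossmann--Brion identification of the localized equivariant class with the multidegree of the tangent cone, together with the observation that a squarefree product of distinct linear factors forces both unit multiplicity and linearity of $TC_uY$. Once that is in place, passing from multiplicity one to smoothness is standard local algebra.
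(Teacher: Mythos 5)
Your plan is to re-prove Brion's smoothness criterion from scratch, whereas the paper simply quotes it (\cite{Brion2}, Theorem 17: $Y$ is smooth at $u$ iff $e_u(Y)=\prod_\alpha \frac{1}{u\alpha}$ over the curves in $Y$) and combines it with $e_u(Y)=[Y]_T|_u/\prod_{\alpha>0}u\alpha$. That is a legitimate ambition, but then the two key steps must actually be proved, and both are gapped as written. In the forward direction, the assertion that every weight of $T_uY$ ``integrates, via the associated one-parameter subgroup, to a $T$-stable curve contained in $Y$'' is not an argument: $Y$ is stable only under $T$, not under the root subgroup whose orbit closure is the relevant curve, so there is nothing to integrate inside $Y$; a tangent weight line of $T_uY$ does not a priori lie tangent to a curve of $Y$. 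What you need is that for each weight $\chi=y_{u(a)}-y_{u(b)}$ of $T_uY$ the unique $T$-curve of $G/B$ through $u$ with that weight is contained in $Y$. This is true, but requires an input such as: the fixed locus $Y^{T'}$ of the codimension-one subtorus $T'=(\ker\chi)^{\circ}$ is smooth at $u$ with tangent space exactly the $\chi$-weight line (using that the weights of $T_u(G/B)$ are pairwise non-proportional), hence contains a $T$-stable curve through $u$ lying in $Y$; equivalently one can invoke the Carrell--Peterson/Brion count that $u$ lies on at least $\dim Y$ curves of $Y$. Without this, your product is over weights of the normal space $T_u(G/B)/T_uY$, which has not been identified with the set of curves not in $Y$.

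In the converse, the structural claim that the multidegree of the tangent cone ``takes the form $\mathrm{mult}_u(Y)\cdot P$, where $P$ is a polynomial each of whose factors is determined by the weights not spanning $TC_uY$'' is false in general: for instance, a union of two coordinate hyperplanes in a three-dimensional weight space has multidegree equal to a sum of two distinct weights, not a multiplicity times a product of linear forms. The correct route is that the multidegree is a nonnegative-integer combination of squarefree products of $\operatorname{codim}$-many ambient weights (one term per component of a Gr\"obner degeneration, weighted by multiplicities), and positivity then shows that a single squarefree term with coefficient $1$ forces $TC_uY$ to be a reduced coordinate subspace, hence $u$ a smooth point. But positivity of multidegrees requires the ambient weights to lie in an open half-space; this holds here because the weights of $T_u(G/B)$ are $u\alpha$, $\alpha<0$, hence all positive against $u\mu$ for $\mu$ antidominant regular --- a point you neither state nor use, yet without it the argument does not get off the ground. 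Both gaps are repairable, and with those repairs your proof would be a genuine (if longer) alternative to the paper's two-line reduction to Brion's criterion; as written, however, the central steps in each direction are unjustified or misstated.
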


\begin{proof}
By  \cite[Theorem 17]{Brion2}, $Y$ is smooth if and only if  the equivariant multiplicity $e_u(Y)$ of $Y$ at $u$ is equal to
\begin{equation}\label{eq:smooth}
e_u(Y)=\prod_{T'}e_u(Y^{T'})=\prod_{\alpha}\frac{1}{u\alpha},
\end{equation}
where $\alpha$ are positive roots such that the $T$-stable curves connecting $u$ and $us_\alpha$ is in $Y$. 
On the other hand, since the weight of the tangent space at $u$ is $\prod_{\alpha>0}u\alpha$, we have 
\begin{align}\label{mlutiequiv}
e_u(Y)=\frac{[Y]_T|_u}{\prod_{\alpha>0}u\alpha}.
\end{align}
Combining \eqref{mlutiequiv} and \eqref{eq:smooth}, we are lead to  \eqref{eq:smoothcri}. 
\end{proof}

To consider the smoothness of the orbit closure $Y_{\gamma}$ of any clan $\gamma$ at $v_{\gamma}$, by \eqref{eq:smoothcri},  we need to discuss the $T$-stable curves starting from $v_\gamma$ not in $Y_{\gamma}$.
Recall that $T$-stable curves on $Fl_n$ are in bijection with cover relations in the Bruhat order, see \cite{Carrell94}. 
Precisely, for $u,w\in S_n$ with $w=ut_{ab}$ for some $a<b$, we can construct a curve 
\begin{equation}\label{eq:curve}
\eta_{u,w}:\mathbb{C}\longrightarrow Fl_n,\qquad 
z\longmapsto A(z)B/B,
\end{equation}
where the $i$-th column of $A(z)$ is $e_{u(i)}+z\delta_{ia}e_{u(b)}$. 
The morphism $\eta_{u,w}$ can be extended to $\mathbb{P}^1$ and $\eta_{u,w}(0)= V_\bullet^u=\dot{u}B/B$ and $c(\infty)=V_\bullet^{w}=\dot{w}B/B$. 
We will refer the curve $\eta_{u,w}$ to \emph{the $T$-stable curve connecting $u$ and $w$}, or 
\emph{the $T$-stable curve starting from $u$ with weight $y_{u(a)}-y_{u(b)}$}. 

\begin{Prop}\label{prop:minimalorbitcontcurves}
Assume $u<w\in S_n$ with $w=ut_{ab}$ for $a<b$ (in particular, we have $u(a)<u(b)$). 
Let $Y_\gamma$ be the minimal $K$-orbit closure contains the $T$-stable curve $\eta_{u,w}$. 
Then  $\gamma$ can be constructed as follows. Let $v$ be the unique $p$-inverse Grassmannian permutation in the right coset $(S_p\times S_q)u$.
\begin{enumerate}[(\romannumeral1)]
\item \label{it:case1} If $u(b)\leq p$ or $p<u(a)$, then $\gamma$ is the unique matchless clan with $v_\gamma = v$, i.e., 
\begin{equation*}% \label{eq:curveclandesc}
\text{the $i$-th node of $\gamma$}
=\begin{cases}
\clan{+}, & \text{if} \ u(i)\leq p,\\
\clan{-}, & \text{if} \ u(i)> p.\\
\end{cases}
\end{equation*}

\item If $u(a)\leq p<u(b)$, then $\gamma$ is the unique clan with exactly one $(a,b)$-matching and $v_\gamma=v$.
 
\end{enumerate}

\end{Prop}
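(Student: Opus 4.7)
The plan is to compute, for generic $z\in\mathbb{C}^\times$, the $K$-orbit of $\eta_{u,w}(z)=A(z)B/B$, where $A(z)$ has $a$-th column $e_{u(a)}+z\,e_{u(b)}$ and $i$-th column $e_{u(i)}$ for $i\neq a$. Since $Fl_n$ has only finitely many $K$-orbits, exactly one orbit $\mathcal{O}$ meets an open dense subset of the curve, and the minimal $K$-orbit closure containing $\eta_{u,w}$ equals $\overline{\mathcal{O}}$: any $K$-stable closed subvariety containing the curve contains some $\eta_{u,w}(z)\in\mathcal{O}$ and hence, by $K$-stability and closedness, contains $\overline{\mathcal{O}}$. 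So the task reduces to exhibiting $\mathcal{O}=Y_\gamma^\circ$ for the $\gamma$ described in~(i) and~(ii).

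For Case~(i), I would treat the subcase $u(a),u(b)\leq p$ (the subcase $p<u(a)$ is symmetric, with $GL_q$ replacing $GL_p$). The elementary transvection $k\in GL_p\subset K$ sending $e_{u(a)}\mapsto e_{u(a)}-z\,e_{u(b)}$ and fixing the other standard basis vectors satisfies $k\cdot A(z)=\dot{u}$, so $\eta_{u,w}(z)\in K\cdot V_\bullet^u$ for every $z$. The matchless clan $\gamma$ described in~(i) has $v_\gamma$ equal to the $p$-inverse Grassmannian $v$ in the coset $(S_p\times S_q)u$, because its $\clan{+}$ signs sit exactly at $\{i:u(i)\leq p\}$ and the labelling recipe then recovers $v$. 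Writing $u=\sigma v$ with $\sigma\in S_p\times S_q$ gives $K\cdot V_\bullet^u=K\cdot V_\bullet^v=K\cdot V_\bullet^\gamma=Y_\gamma^\circ$, as required.

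For Case~(ii), with $u(a)\leq p<u(b)$, let $\gamma$ be the clan with a single $(a,b)$-matching whose $i$-th unmatched node is $\clan{+}$ if $v(i)\leq p$ and $\clan{-}$ otherwise; this $\gamma$ is forced by the constraint $v_\gamma=v$. Choose $\sigma\in S_p\times S_q$ with $u=\sigma v$, and let $D_z\in GL_q\subset K$ be the diagonal element scaling $e_{u(b)}$ by $z$ (a valid element of $K$ precisely because $u(b)>p$). A direct computation shows that $D_z\dot{\sigma}\cdot\dot{\gamma}$ has $i$-th column $e_{u(i)}$ for $i\neq a,b$, $a$-th column $e_{u(a)}+z\,e_{u(b)}$, and $b$-th column $z\,e_{u(b)}$; rescaling the $b$-th column by $z^{-1}$ is a right $B$-action, which yields $A(z)$. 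Therefore $\eta_{u,w}(z)\in K\cdot V_\bullet^\gamma=Y_\gamma^\circ$.

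The main bookkeeping step I expect to require care is the combinatorial check that $v_\gamma=v$ for the clan described in Case~(ii): this reduces to tracing the labelling recipe position by position, using that $a$ (resp.~$b$) is a left (resp.~right) endpoint and hence contributes to the $\leq p$ (resp.~$>p$) label pool exactly like $v(a)$ (resp.~$v(b)$). Uniqueness of $\gamma$ in both cases also follows from this recipe, since specifying which positions carry a matching determines the sign pattern at the remaining positions once $v_\gamma=v$ is imposed.
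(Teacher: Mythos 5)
Your proposal is correct and follows essentially the same route as the paper: in both cases you realize the generic point $A(z)B/B$ of the curve inside $Y_\gamma^\circ$ by factoring out an explicit element of $K$ (a transvection in the block $GL_p$ or $GL_q$ in case (i), and the diagonal scaling of $e_{u(b)}$ combined with a right $B$-rescaling of the $b$-th column in case (ii), which is exactly the paper's $2\times 2$ identity), and then deduce minimality from the finiteness of $K$-orbits exactly as the paper does. The only cosmetic difference is that you carry the coset representative $\sigma$ through the computation instead of reducing to $u=v$ at the outset, which changes nothing of substance.
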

\begin{proof}
Note that we can write the matrix $A(z)$ in \eqref{eq:curve} as 
$A(z)=B(z)\dot{u}$, where 
\[B(z)=\mathbf{1}_n+zE_{u(b),u(a)},\]
and $E_{u(b),u(a)}$ is the elementary matrix with $1$ in the $(u(b),u(a))$ position and 0 elsewhere.

In Case (i),  we have $B(z)\in GL_p\times GL_q$.
When $z\neq \infty$, $A(z)B/B$ is contained in the orbit
$K\dot{u}B/B$.
Since $\gamma$ is matchless, we have $\dot{v}_{\gamma}=\dot{v}=
\dot{\gamma}$. Thus the  orbit   $Y_\gamma^\circ$ contains $\dot{v}B/B$, or equivalently, $\dot{u}B/B$. Therefore, the orbit closure $Y_\gamma$ contains $A(z)$, i.e., the  $T$-stable curve connecting $u$ and $w$.

Let us consider Case (ii). 
Let $y\in S_p\times S_q$ be such that $yu=v$. 
Since $\dot{y}\in GL_p\times GL_q$,  the curve $\eta_{u,w}$ is contained in $Y_\gamma^\circ$ if and only if the curve $\eta_{yu,yw}$ is contained in $Y_\gamma^\circ$. Therefore, we can assume that $u=v$.  Since $\gamma$ has only one $(a,b)$-matching with $v_\gamma=v$, we have $\dot{\gamma}=\dot{v}_\gamma+E_{u(b),a}=\dot{v}+E_{u(b),a}$.
When $z\neq 0$, taking the $\{u(a),u(b)\}\times\{a,b\}$ minor, by the identity
$$\left[\begin{matrix}
1&0\\z&1
\end{matrix}\right]
= 
\left[\begin{matrix}
1&0\\0&z
\end{matrix}\right]
\left[\begin{matrix}
1&0\\1&1
\end{matrix}\right]
\left[\begin{matrix}
1&0\\0&z^{-1}
\end{matrix}\right], 
$$
we find  
$$A(z)B/B \in K\dot{\gamma}B/B=Y_\gamma^\circ.$$
This proves Case (ii).

Since $A(z)B/B\in K\dot{\gamma}B/B=Y_\gamma^\circ$ for any $z\in \mathbb{C}^\times$ in both cases, we find that any $Y_{\tau}$ containing the curve $\eta_{u,w}$ satisfies $Y_{\gamma}^{\circ}\subseteq Y_{\tau}$, so $Y_{\gamma}\subseteq Y_{\tau}$. That is, $Y_{\gamma}$ is the minimal.
\end{proof}

\begin{Th}\label{thm:descofcurveon}
Let $\gamma$ be a ($p,q$)-clan. 
Then the $T$-stable curve starting from $v=v_\gamma$ with weight $y_{v(a)}-y_{v(b)}$ ($a<b$) is contained in $Y_\gamma$ if and only if either one of the following holds:
\begin{itemize}
    \item[(1)] $\max\{v(a),v(b)\}\leq p$ or $\min\{v(a),v(b)\}>p$. 
    \item[(2)] $v(a)\leq p<v(b)$ with $i\leq a<b\leq j$ for some $(i,j)$-matching in $\gamma$. 
\end{itemize}
\end{Th}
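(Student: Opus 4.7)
My approach is to invoke Proposition~\ref{prop:minimalorbitcontcurves} to identify the minimal $K$-orbit closure $Y_\mu$ containing the $T$-stable curve $\eta_{v,vt_{ab}}$, and then check when $Y_\mu\subseteq Y_\gamma$. Since $v=v_\gamma$ is $p$-inverse Grassmannian and $a<b$, conditions (1) and (2) exactly cover the upward cases $v(a)<v(b)$, while the downward case $v(a)>p\ge v(b)$ is excluded. In the downward case, $vt_{ab}<v_\gamma$ in Bruhat order, so Proposition~\ref{prop:localization}(4) forces $V_\bullet^{vt_{ab}}\notin Y_\gamma$, and the closed variety $Y_\gamma$ cannot contain a curve passing through $vt_{ab}$. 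In the in-block upward case of (1), Proposition~\ref{prop:minimalorbitcontcurves}(i) identifies $\mu$ as the matchless clan with $v_\mu=v$, hence $Y_\mu=\overline{K\cdot V_\bullet^v}\subseteq Y_\gamma$ by $K$-invariance, using $V_\bullet^v\in Y_\gamma$.

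The substantive case is the cross-block case (2): $v(a)\le p<v(b)$. Here Proposition~\ref{prop:minimalorbitcontcurves}(ii) identifies $Y_\mu=Y_\tau$, where $\tau$ has the single matching $(a,b)$ and $v_\tau=v$, so the curve lies in $Y_\gamma$ iff $Y_\tau\subseteq Y_\gamma$. The theorem then reduces to showing this inclusion is equivalent to $\gamma$ having a matching $(i,j)$ with $i\le a<b\le j$.

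For the forward implication, given such a matching $(i,j)$ of $\gamma$, I would construct an explicit one-parameter subgroup $g(t)\subset K=GL_p\times GL_q$ with $\lim_{t\to 0}g(t)V_\bullet^\gamma=V_\bullet^\tau$, thereby placing $V_\bullet^\tau$ in $Y_\gamma$. The construction is two-step: first, diagonal rescalings in a maximal torus of $K$ scale down every matching of $\gamma$ other than $(i,j)$, reducing to the single-matching clan $\gamma'$ with only $(i,j)$; second, upper-triangular elementary matrices in $GL_p$ (mixing $\{e_k:v(k)\le p\}$) and in $GL_q$ (mixing $\{e_k:v(k)>p\}$), combined with a further torus degeneration, slide the left endpoint from position $i$ down to $a$ and the right endpoint from $j$ down to $b$. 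Each step produces a valid limit flag in $\overline{K\cdot V_\bullet^\gamma}$, and the final limit is $V_\bullet^\tau$, so $Y_\tau\subseteq Y_\gamma$.

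For the reverse implication, I argue contrapositively via a $K$-invariant, upper semicontinuous rank function on $Fl_n$ that detects matchings covering a pair of positions. The target combinatorial invariant is
\[ M_\gamma(r,s)\ :=\ \#\bigl\{(i,j)\ \text{matching of }\gamma: i\le r\ \text{and}\ s\le j\bigr\}, \]
which I aim to realize as the value on $V_\bullet^\gamma$ of a geometric rank function built from dimensions of combined intersections of $V_r$, $V_{s-1}$ with the $K$-stable subspace $E=\langle e_1,\ldots,e_p\rangle$. Since $M_\tau(a,b)=1$ while the hypothesis that no matching of $\gamma$ covers $(a,b)$ would give $M_\gamma(a,b)=0$, upper semicontinuity of the rank function under specialization would force $V_\bullet^\tau\notin Y_\gamma$. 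The main obstacle is pinning down this bi-positional invariant correctly: the cruder invariants such as $\dim(V_r\cap E)$, which track only matchings straddling a single row, can be defeated by two short matchings in $\gamma$ jointly spanning $[a,b]$ without any single one covering it, so the heart of the proof requires a careful two-dimensional rank-function analysis sensitive to how matchings interact at pairs of positions.
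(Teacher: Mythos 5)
Your reduction is the same as the paper's: both directions are funneled through Proposition~\ref{prop:minimalorbitcontcurves}, so everything comes down to deciding when the single-matching clan $\tau$ (or the matchless clan) has $Y_\tau\subseteq Y_\gamma$. The paper settles these containments purely combinatorially, by citing Wyser's characterization of the closure order (Proposition~\ref{prop:wyser}) and its consequences (Corollaries~\ref{cor:contain1}--\ref{cor:contain3}); in particular the ``only if'' half of case (2) is a one-line consequence of the inequality $1=\tau(a;b)\le\gamma(a;b)$. You instead try to prove the containments geometrically, and that is where the proposal is incomplete. For the forward implication your degeneration sketch (kill the other matchings by the torus, then slide the endpoints of the $(i,j)$-arc inward using elementary matrices in $GL_p\times GL_q$ together with a torus rescaling) can indeed be made to work, but as written it is only a plan; the sliding step, which is exactly the content of Corollary~\ref{cor:contain2}, is asserted rather than carried out.

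The genuine gap is the reverse implication in case (2). You need a $K$-invariant, upper semicontinuous quantity whose value on $V_\bullet^\gamma$ is the number of matchings $(i,j)$ with $i\le a<b\le j$, and you state explicitly that you have not pinned this invariant down --- the single-space dimensions $\dim(V_r\cap E)$, $\dim(V_r\cap E^\perp)$ that you can write down do not see it, as your own example of two short arcs shows. But this bi-positional invariant is precisely the quantity $\gamma(a;b)$ in Wyser's theorem (\cite[Theorem 1.2]{Wyser16}, quoted as Proposition~\ref{prop:wyser}), so without either constructing it from scratch (and proving its semicontinuity and its value on the representatives $\dot\gamma B/B$) or simply invoking that result, the ``only if'' direction is unproved; this is the heart of the theorem, not a routine verification. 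A smaller issue of the same kind occurs in your exclusion of the downward case $v(b)\le p<v(a)$: Proposition~\ref{prop:localization}(4) only gives vanishing of the localization $[Y_\gamma]_T|_{vt_{ab}}$, which does not by itself imply $\dot{(vt_{ab})}B/B\notin Y_\gamma$; to get non-membership you must either quote Brion's nonvanishing of equivariant multiplicities at points of the variety or argue combinatorially as the paper does via Corollary~\ref{cor:contain3} (again a consequence of Proposition~\ref{prop:wyser}). Likewise, in your case (1) argument the fact $V_\bullet^{v_\gamma}\in Y_\gamma$ is true but needs a word of justification (e.g.\ a torus degeneration of $V_\bullet^\gamma$, or Theorem~\ref{th:localization}).
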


To prove Theorem \ref{thm:descofcurveon}, we need to recall the partial order on $K$-orbit closures under containment which can be characterized by the Bruhat order on clans.

\begin{Prop}[{\cite[Theorem 1.2]{Wyser16}}]
\label{prop:wyser}
    Let $\gamma,\tau$ be two $(p,q)$-clans with $p+q=n$. 
    Then $Y_{\gamma}\subseteq Y_{\tau}$ if and only if the following hold for all $i<j$:
    $$\gamma(i;\clan{+})\ge\tau(i;\clan{+}),\qquad 
    \gamma(i;\clan{-})\ge\tau(i;\clan{-}),\qquad 
    \gamma(i;j)\le\tau(i;j),$$
        where 
\begin{align*}
    \gamma(i;\clan{+})
        & = \texttt{\#}\{\clan{+}\text{ and matchings in the first $i$ nodes of }\gamma\},\\
    \gamma(i;\clan{-})
        &=\texttt{\#}\{\clan{-}\text{ and matchings in the first $i$ nodes of }\gamma\},\\
    \gamma(i;j)&=\texttt{\#}\{(s,t)\text{-matching of }\gamma\text{ with }s\le i<j\leq t \}.
    \end{align*}
\end{Prop}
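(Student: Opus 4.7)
The plan is to reduce the curve-containment question to a $K$-orbit-closure containment via Proposition~\ref{prop:minimalorbitcontcurves}, and then to settle that containment by combining Proposition~\ref{prop:localization}(4) with Wyser's criterion (Proposition~\ref{prop:wyser}).

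Because $v = v_\gamma$ is a $p$-inverse Grassmannian, the pair $(v(a), v(b))$ is automatically increasing for $a < b$ except in the sub-case $v(a) > p \geq v(b)$. In this excluded sub-case, conditions (1) and (2) both fail, while $v t_{ab} < v$ in the Bruhat order; Proposition~\ref{prop:localization}(4) then forces $V_\bullet^{v t_{ab}} \notin Y_\gamma$, and since the curve passes through this fixed point it cannot lie in $Y_\gamma$. In the remaining case $v(a) < v(b)$, applying Proposition~\ref{prop:minimalorbitcontcurves} with $u = v$ identifies the minimal $K$-orbit closure $Y_\tau$ containing the curve: $\tau$ is the matchless clan with $v_\tau = v_\gamma$ when $v(a), v(b)$ are both $\leq p$ or both $> p$ (the setting of (1)), and $\tau$ is the clan with a single $(a,b)$-matching and all other signs determined by $v_\gamma$ when $v(a) \leq p < v(b)$ (the setting of (2)). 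The curve lies in $Y_\gamma$ if and only if $Y_\tau \subseteq Y_\gamma$, so the problem becomes checking this containment.

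For condition~(1), Proposition~\ref{prop:localization}(4) together with the explicit nonvanishing in Theorem~\ref{th:localization} gives $V_\bullet^{v_\gamma} \in Y_\gamma$, and $K$-invariance yields $Y_\tau = \overline{K \cdot V_\bullet^{v_\gamma}} \subseteq Y_\gamma$ immediately. For condition~(2) the plan is to apply Proposition~\ref{prop:wyser} to $\tau$ and $\gamma$. Since $\tau$ has a unique $(a,b)$-matching, $\tau(i; j) = 1$ precisely on the range $a \leq i < j \leq b$; the matching-count inequality $\tau(i; j) \leq \gamma(i; j)$ is then nontrivial only on this range, and it holds for all such $(i, j)$ if and only if $\gamma$ admits a matching $(s, t)$ with $s \leq a < b \leq t$, which is exactly the spanning condition in~(2). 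The sign-count inequalities $\tau(i; \clan{\pm}) \geq \gamma(i; \clan{\pm})$ require a careful bookkeeping: exploiting $v_\tau = v_\gamma$ so that the two clans share the same distribution of values $\leq p$, the counts $\tau(i; \clan{+})$ and $\gamma(i; \clan{+})$ differ only through contributions from the matching endpoints, and the inequalities reduce, on the range $a \leq i < b$, to requiring that $\gamma$ have strictly more left endpoints than completed matchings among its first $i$ nodes --- precisely what the spanning matching $(s,t)$ guarantees by contributing an uncompleted left endpoint up to position $i$. The $\clan{-}$ inequality is automatic by the symmetric bookkeeping, and the ranges $i < a$ and $i \geq b$ reduce to the trivial bounds \emph{left endpoints} $\geq$ \emph{completed matchings} and \emph{right endpoints} $\geq$ \emph{completed matchings}. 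The main technical step will be this sign-count reduction, which is elementary but demands patient comparison of $\tau$ and $\gamma$ under their shared label assignment.
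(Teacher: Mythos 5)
Your proposal does not prove the statement it was asked to prove. The target is Proposition~\ref{prop:wyser}, Wyser's combinatorial criterion for the containment $Y_\gamma\subseteq Y_\tau$ in terms of the statistics $\gamma(i;\clan{+})$, $\gamma(i;\clan{-})$, $\gamma(i;j)$. What you have outlined instead is an argument for Theorem~\ref{thm:descofcurveon} (which $T$-stable curves through $v_\gamma$ lie in $Y_\gamma$): your case split on $v(a),v(b)$ relative to $p$, your appeal to Proposition~\ref{prop:minimalorbitcontcurves} to identify the minimal orbit closure containing the curve, and your references to ``conditions (1) and (2)'' and the ``spanning matching'' all belong to that theorem, and in fact closely track the paper's own proof of it. Worse, your argument explicitly invokes ``Wyser's criterion (Proposition~\ref{prop:wyser})'' as an ingredient, so even reinterpreted charitably it is circular as a proof of the proposition: you would be using the statement to derive consequences, not establishing it.

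A genuine proof of Proposition~\ref{prop:wyser} would have to engage with the closure order on $K$-orbits itself — for instance via the rank conditions $\dim(V_i\cap \mathbb{C}^p)$, $\dim(V_i + \mathbb{C}^p)$ and the projections of $V_i$ cutting out the orbits, or via the weak order generated by the $0$-Hecke moves $s_i*\gamma$ together with a Richardson--Springer-type analysis showing the combinatorial order is not coarser than the geometric one. None of this appears in your proposal. Note also that the paper does not prove this proposition; it imports it verbatim from \cite[Theorem 1.2]{Wyser16}, so there is no internal argument to compare against — but that does not excuse substituting a proof of a different (downstream) result that depends on it.
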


The partial order under containment on orbit closures defines a partial order, called the strong Bruhat order, on clans. That is, we can define $\gamma\preceq\tau$ in the strong Bruhat order if and only if $Y_\gamma\subseteq Y_{\tau}$.

\begin{Lemma}\label{lem:wyser1}
 In Proposition \ref{prop:wyser}, the condition
$\gamma(i;\clan{+})\geq \tau(i;\clan{+})$ for all $i\in [n]:=\{1,2,\ldots,n\}$ is equivalent to $u_\gamma\ge u_\tau$ under Bruhat order. 
Similarly, 
$\gamma(i;\clan{-})\geq \tau(i;\clan{-})$ for all $i\in [n]$ 
is equivalent to $v_\gamma\ge v_\tau$. 
\end{Lemma}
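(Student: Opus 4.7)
The plan is to recognize the combinatorial quantities $\gamma(i;\clan{+})$ and $\gamma(i;\clan{-})$ as encoding a ``profile'' of $u_\gamma$ and $v_\gamma$ respectively, and then to invoke the standard characterization of Bruhat order on $q$-inverse Grassmannian permutations.

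First, I would rewrite $\gamma(i;\clan{+})$ in terms of $u_\gamma$. A matching of $\gamma$ lies entirely in the first $i$ nodes iff its right endpoint does, so $\gamma(i;\clan{+})$ equals the number of $\clan{+}$'s plus the number of right endpoints among the first $i$ nodes. By the construction of $u_\gamma$, the $k$-th node is a $\clan{+}$ or a right endpoint precisely when $u_\gamma(k) > q$. Thus
\begin{equation*}
\gamma(i;\clan{+}) \;=\; \bigl|\{k \le i : u_\gamma(k) > q\}\bigr|.
\end{equation*}
The parallel identity $\gamma(i;\clan{-}) = |\{k \le i : v_\gamma(k) > p\}|$ is obtained by swapping the roles of $\clan{+}\leftrightarrow\clan{-}$ and $p\leftrightarrow q$ in the construction.

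Next I would invoke the standard fact that for two $q$-inverse Grassmannian permutations $u,u'\in S_n$ (so that $u^{-1},(u')^{-1}$ are $q$-Grassmannian), $u \ge u'$ in Bruhat order if and only if $|\{k\le i : u(k) > q\}| \ge |\{k\le i : u'(k) > q\}|$ for every $i\in[n]$. This is the translation through the inverse of the partition-containment description of Bruhat order on Grassmannian permutations, or equivalently Ehresmann's rank-function criterion specialized to one-descent permutations, where the only nontrivial rank is at $k=q$. Combining with the formula above gives
\begin{equation*}
\gamma(i;\clan{+})\ge\tau(i;\clan{+})\ \text{for all}\ i \quad\Longleftrightarrow\quad u_\gamma \ge u_\tau,
\end{equation*}
and the mirror argument applied to $v_\gamma,v_\tau$ (both $p$-inverse Grassmannian) yields the second claim.

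The only real point to pin down, which is the one possible obstacle, is the reading of ``matchings in the first $i$ nodes'' in the definition of $\gamma(i;\clan{\pm})$, namely that it refers to matchings whose two endpoints both lie in $[1,i]$ (equivalently, whose right endpoint does). Once this convention is fixed, the identification with the permutation profile is immediate and the rest is a direct appeal to a well-known Bruhat characterization, so no further technical input is needed.
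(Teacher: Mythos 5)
Your proof is correct and takes essentially the same route as the paper's: both reduce $\gamma(i;\clan{+})$ to the count of $\clan{+}$'s and right endpoints among the first $i$ nodes (equivalently $\#\{k\le i : u_\gamma(k)>q\}$) and then invoke the standard initial-segment characterization of Bruhat order for $q$-inverse Grassmannian permutations, with the mirror argument for $v_\gamma$. The only cosmetic difference is that the paper passes through auxiliary matchless clans and the complementary counts $\#(u[i]\cap[q])$, which you bypass by comparing the counts of values exceeding $q$ directly.
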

\begin{proof}
Notice that $\gamma(i;\clan{+})$ is also equal to the number of $\clan{+}$ and right endpoints in the first $i$ nodes of $\gamma$.
Let $\gamma'$ (resp., $\tau'$) be the matchless clan obtained from $\gamma$ (resp., $\tau$) by replacing each left endpoint  with $\clan{-}$ and right endpoint with $\clan{+}$. Then we have $\gamma'(i;\clan{+})=\gamma(i;\clan{+})$ and $u_{\gamma}=u_{\gamma'}$. Therefore, 
\[\gamma(i;\clan{+})\ge \tau(i;\clan{+})\Leftrightarrow\gamma'(i;\clan{+})\ge \tau'(i;\clan{+})\Leftrightarrow \gamma'(i;\clan{-})\le \tau'(i;\clan{-}), \text{for\ } i\in [n].\]
 For any two $q$-inverse Grassmannian permutations $w_1,w_2$, we have $w_1\leq w_2$ under Bruhat order $\Leftrightarrow$
$w_{1}^{-1}(i)\le w_{2}^{-1}(i)$ for  $i\in[q]$ $\Leftrightarrow$ $\texttt{\#}(w_{1}[i]\cap [q])\ge \texttt{\#}(w_{2}[i]\cap [q])$ for $i\in [n]$. Thus 
\[\gamma'(i;\clan{-})\le \tau'(i;\clan{-})\Leftrightarrow  \texttt{\#}(u_{\gamma'}[i]\cap [q])\le \texttt{\#}(u_{\tau'}[i]\cap [q])\Leftrightarrow u_{\gamma'}\ge u_{\tau'}.\]
Similarly, $\gamma'(i;\clan{-})\ge \tau'(i;\clan{-})$ is equivalent to $\tau'$ has as least as many $\clan{+}$'s than $\gamma'$ among the first $i$ positions, that is, $v_{\gamma'}\ge v_{\tau'}$.
\end{proof}

\begin{Coro}\label{cor:contain3}
Let $\gamma$ be a ($p,q$)-clan. 
If the torus fixed point $V^w_\bullet=\dot{w}B/B\in Y_\gamma$, then 
$v_\gamma\leq w$. 
\end{Coro}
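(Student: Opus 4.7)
The strategy is to identify the $K$-orbit through the $T$-fixed point $V^w_\bullet$ with an orbit $Y^\circ_\tau$ attached to a matchless clan $\tau$, and then invoke the Wyser-style Bruhat order on clans via Proposition \ref{prop:wyser} together with Lemma \ref{lem:wyser1}.

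First I would factor $w = \sigma v$, where $\sigma \in S_p \times S_q$ and $v$ is the unique $p$-inverse Grassmannian representative of the left coset $(S_p \times S_q)w$. Standard facts about parabolic cosets give $v \leq w$ in the Bruhat order, since $v$ is the minimal length element of its coset. Because $\dot{\sigma} \in K$, we have $K\dot{w}B/B = K\dot{v}B/B$, and the right-hand side coincides with $Y^\circ_\tau$ for the unique matchless clan $\tau$ satisfying $v_\tau = v$. (The construction of $\dot{\gamma}$ in Section \ref{subsec:polyrepn} reduces to $\dot{\tau} = \dot{v}$ when $\tau$ has no arcs.) Hence $V^w_\bullet \in Y^\circ_\tau$.

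Since $Y_\gamma$ is closed and $K$-stable, the hypothesis $V^w_\bullet \in Y_\gamma$ forces the entire orbit $Y^\circ_\tau$, and therefore its closure $Y_\tau$, to lie in $Y_\gamma$. Applying Proposition \ref{prop:wyser} to the containment $Y_\tau \subseteq Y_\gamma$ yields in particular $\tau(i; \clan{-}) \geq \gamma(i; \clan{-})$ for all $i$, which by Lemma \ref{lem:wyser1} is equivalent to $v_\tau \geq v_\gamma$. Chaining $v_\gamma \leq v_\tau = v \leq w$ gives the desired inequality $v_\gamma \leq w$.

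No real obstacle is expected: the argument is essentially an immediate consequence of the machinery already assembled. The only verification deserving attention is the identification of the $K$-orbit through a torus fixed point with a matchless-clan orbit, which is forced by the coset decomposition $w = \sigma v$ and the fact that $\dot{\sigma} \in K$.
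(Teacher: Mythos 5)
Your proposal is correct and follows essentially the same route as the paper: take the matchless clan $\tau$ whose $v_\tau$ is the minimal ($p$-inverse Grassmannian) representative of $(S_p\times S_q)w$, observe $\dot{w}B/B\in Y^\circ_\tau$ so that $Y_\tau\subseteq Y_\gamma$, and conclude $v_\gamma\leq v_\tau\leq w$ via Proposition \ref{prop:wyser} and Lemma \ref{lem:wyser1}. The only differences are cosmetic (e.g.\ calling $(S_p\times S_q)w$ a left rather than right coset and spelling out that $\dot{\tau}=\dot{v}_\tau$ for matchless $\tau$).
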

\begin{proof}
Let $\tau$ be the matchless clan such that $v_{\tau}$ is the minimal element in $(S_p\times S_q)w$. 
Then $v_{\tau}\le w$ and  $\dot{v}_{\tau}B/B\in Y_{\tau}^\circ$. 
Since $w=yv_\tau$ for some $y\in S_p\times S_q$, we have  $\dot{w}B/B\in Y^{\circ}_{\tau}$. 
Since $\dot{w}B/B\in Y_{\gamma}$, we have $Y_{\tau}\subseteq Y_{\gamma}$.
By Proposition \ref{prop:wyser}, we have $\tau(i;\clan{-})\ge \gamma(i;\clan{-})$ for $i\in[n]$ and by Lemma \ref{lem:wyser1}, we have $w\geq v_\tau\geq v_\gamma$. 
\end{proof}

We need two more corollaries which are immediate from \cite[Theorem 2.8]{Wyser16}. 
\begin{Coro}\label{cor:contain1}
Let $\gamma$ be a $(p,q)$-clan with a $\clan{+}$ at the $i$-th node and a $\clan{-}$ at the $j$-th node, $i<j$ . Let $\gamma'$ be the clan obtained from $\gamma$ by matching the $i$-th and $j$-th node. Then $Y_{\gamma}\subseteq Y_{\gamma'}$. 
    $$\gamma = \clan{\dots+\dots\dots-\dots}\qquad 
    \gamma' = \clan{\dots3\dots\dots.\dots}$$
\end{Coro}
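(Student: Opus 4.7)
The plan is to give a short geometric proof by degenerating $V_\bullet^{\gamma'}$ to $V_\bullet^\gamma$ inside $Y_{\gamma'}$. Once we have $V_\bullet^\gamma \in Y_{\gamma'}$, the $K$-invariance and closedness of $Y_{\gamma'}$ will force $Y_\gamma = \overline{K \cdot V_\bullet^\gamma} \subseteq Y_{\gamma'}$.

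First I would observe that $v_\gamma = v_{\gamma'}$. Recall that $v$ is produced by reading the clan left to right and assigning labels $1,\ldots,p$ to $\clan{+}$ and left endpoints and $p+1,\ldots,n$ to $\clan{-}$ and right endpoints. Converting the $\clan{+}$ at position $i$ into the left endpoint of a new matching, and the $\clan{-}$ at position $j$ into its right endpoint, alters neither the class membership of any node nor the reading order; so $v := v_\gamma = v_{\gamma'}$. Comparing the canonical bases of Section~\ref{subsec:polyrepn}, the vectors $\mathbf{v}_k^\gamma$ and $\mathbf{v}_k^{\gamma'}$ agree for every $k \neq i$, whereas $\mathbf{v}_i^\gamma = e_{v(i)}$ and $\mathbf{v}_i^{\gamma'} = e_{v(i)} + e_{v(j)}$.

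Next, I would define a morphism $\mathbb{A}^1 \to Fl_n$, $z \mapsto V_\bullet(z)$, by replacing $\mathbf{v}_i^{\gamma'}$ with $\mathbf{v}_i(z) := e_{v(i)} + z\, e_{v(j)}$ and leaving every other basis vector of $V_\bullet^{\gamma'}$ unchanged, so that $V_\bullet(1) = V_\bullet^{\gamma'}$ and $V_\bullet(0) = V_\bullet^\gamma$. For $z \neq 0$, the diagonal torus element with $z$ in the $v(j)$-th slot and $1$ elsewhere acts on $V_\bullet^{\gamma'}$ by sending $\mathbf{v}_i^{\gamma'} \mapsto e_{v(i)} + z\,e_{v(j)}$ and rescaling $\mathbf{v}_j^{\gamma'} = e_{v(j)}$ to $z\,e_{v(j)}$, producing exactly the flag $V_\bullet(z)$. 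Hence $V_\bullet(z) \in T \cdot V_\bullet^{\gamma'} \subseteq Y_{\gamma'}^\circ$ for every $z \neq 0$, and passing to the limit $z \to 0$ inside the closed variety $Y_{\gamma'}$ yields $V_\bullet^\gamma \in Y_{\gamma'}$, completing the argument.

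The one point meriting care is the implicit claim that the torus rescaling along coordinate $v(j)$ fixes every $\mathbf{v}_k^{\gamma'}$ with $k \notin \{i,j\}$; this reduces to the observation that in $\gamma'$ the only matching touching the label $v(j)$ is $(i,j)$, so $e_{v(j)}$ enters the canonical basis of $V_\bullet^{\gamma'}$ only through positions $i$ and $j$. Everything else in the proof is formal.
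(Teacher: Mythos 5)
Your argument is correct, but it takes a different route from the paper: the paper derives this corollary (together with Corollary \ref{cor:contain2}) by simply invoking Wyser's combinatorial description of the containment order on clans \cite[Theorem 2.8]{Wyser16} (cf.\ Proposition \ref{prop:wyser}), whereas you give a self-contained geometric degeneration. Your key observations all check out: since converting the $\clan{+}$ at node $i$ and the $\clan{-}$ at node $j$ into a matching changes neither label class, indeed $v_\gamma=v_{\gamma'}=:v$, the canonical vectors agree except that $\mathbf{v}_i^{\gamma}=e_{v(i)}$ while $\mathbf{v}_i^{\gamma'}=e_{v(i)}+e_{v(j)}$; the diagonal element $t_z$ with $z$ in slot $v(j)$ lies in $T\subset K$ and sends $V_\bullet^{\gamma'}$ to $V_\bullet(z)$ for $z\neq 0$ (the rescaling of $\mathbf{v}_j^{\gamma'}=e_{v(j)}$ does not change the flag, and $e_{v(j)}$ occurs in no other canonical vector since $(i,j)$ is the only matching touching node $j$); the family $z\mapsto V_\bullet(z)$ is a morphism on all of $\mathbb{A}^1$ because the modified column can be reduced by the $j$-th column, so the matrix stays invertible at $z=0$, where it equals $\dot\gamma$; hence $V_\bullet^\gamma\in Y_{\gamma'}$, and $K$-stability plus closedness of $Y_{\gamma'}$ give $Y_\gamma\subseteq Y_{\gamma'}$. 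This is very much in the spirit of the paper's own proof of Proposition \ref{prop:minimalorbitcontcurves} (the $2\times 2$ factorization trick there plays the role of your torus degeneration), so it fits the paper's toolkit; what the citation buys is brevity, while your proof buys self-containedness and in fact shows the slightly stronger statement that $V_\bullet^\gamma$ lies in the closure of the single $T$-orbit of $V_\bullet^{\gamma'}$.
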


\begin{Coro}\label{cor:contain2}
    Let $\gamma$ and $\tau$ be two clans with 
    $v_\gamma=v_\tau$. 
    % the same underlying graph (that is, $v_\gamma=v_\tau$). 
    If $\gamma$ has a unique $(i_1,j_1)$-matching, $\tau$ has a unique $(i_2,j_2)$-matching, and $i_1\le i_2\le j_2\le j_1$, then $Y_{\tau}\subseteq Y_{\gamma}$. 
$$\begin{matrix}
\tau = \clan{\dots+\dots2\dots.\dots-\dots}\qquad 
\gamma = \clan{\dots6\dots+\dots-\dots.\dots}\\
\end{matrix}$$
\end{Coro}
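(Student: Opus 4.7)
The plan is to verify the combinatorial containment criterion of Proposition~\ref{prop:wyser} directly in terms of the prefix-counting statistics. First, I would unravel what the hypothesis $v_\gamma=v_\tau$ says about the two clans. The labeling rule defining $v_\gamma$ treats a left-endpoint and a $\clan{+}$ identically (both receive a label $\le p$), and similarly for right-endpoints and $\clan{-}$'s. Thus $v_\gamma=v_\tau$ means $\gamma$ and $\tau$ share the same two-color pattern: at each position $k$, they either both have a ``positive'' symbol (left-endpoint or $\clan{+}$) or both have a ``negative'' symbol (right-endpoint or $\clan{-}$). Combined with the uniqueness of the arcs in each clan, this pins down the structure completely: in $\gamma$ the positions $i_1,j_1$ carry the arc while $i_2$ is a $\clan{+}$ and $j_2$ is a $\clan{-}$; in $\tau$ the positions $i_2,j_2$ carry the arc while $i_1$ is a $\clan{+}$ and $j_1$ is a $\clan{-}$; and the two clans agree at every other position.

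Next I would verify the three inequalities required for $Y_\tau\subseteq Y_\gamma$, namely $\tau(i;\clan{+})\ge\gamma(i;\clan{+})$, $\tau(i;\clan{-})\ge\gamma(i;\clan{-})$, and $\tau(i;j)\le\gamma(i;j)$. For the first, a matching left-endpoint and a $\clan{+}$ each contribute $+1$ to the $\clan{+}$-prefix-count, so swapping which of $i_1,i_2$ carries the endpoint leaves every prefix sum unchanged; the inequality is in fact an equality. The $\clan{-}$-prefix-count is handled identically using the pair $j_1,j_2$. For the matching count, $\tau(i;j)\in\{0,1\}$, and $\tau(i;j)=1$ corresponds to the unique arc of $\tau$, which forces $i_2\le i<j\le j_2$; combining with the hypothesis $i_1\le i_2$ and $j_2\le j_1$ yields $i_1\le i<j\le j_1$, so $\gamma(i;j)=1$ as well. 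By Proposition~\ref{prop:wyser}, $Y_\tau\subseteq Y_\gamma$ follows.

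There is no real obstacle: once Proposition~\ref{prop:wyser} is in hand, the argument is a short bookkeeping exercise. The only mildly subtle step is the initial structural observation that $v_\gamma$ does not distinguish left-endpoints from $\clan{+}$'s (nor right-endpoints from $\clan{-}$'s). This is exactly the slack that allows the endpoints of $\gamma$'s arc to ``slide inwards'' to produce $\tau$ without changing $v$, and it is what makes the prefix-counts balance perfectly while the containment statistic $\tau(i;j)\le\gamma(i;j)$ moves in the favorable direction.
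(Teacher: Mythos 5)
Your overall route --- checking the three prefix conditions of Proposition~\ref{prop:wyser} directly --- is a perfectly reasonable one, and it differs from the paper, which obtains this corollary as an immediate consequence of Wyser's results (Theorem 2.8 of \cite{Wyser16}) rather than by hand. Your structural observation that $v_\gamma=v_\tau$ only records the two-coloring of nodes into $\{\clan{+},\text{left endpoint}\}$ versus $\{\clan{-},\text{right endpoint}\}$, and hence that $\gamma$ and $\tau$ agree everywhere except that the arc endpoints trade places with the corresponding signs, is correct; so is your verification of the arc condition $\tau(i;j)\le\gamma(i;j)$, which correctly uses the nesting $i_1\le i_2\le j_2\le j_1$.

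However, the step verifying the sign statistics is wrong as written. In Proposition~\ref{prop:wyser}, $\gamma(i;\clan{+})$ counts $\clan{+}$'s together with arcs lying \emph{entirely} in the first $i$ nodes; equivalently (Lemma~\ref{lem:wyser1}) it counts $\clan{+}$'s and \emph{right} endpoints, not left endpoints. So a left endpoint does not contribute to the $\clan{+}$-prefix count, and your claim that the $\clan{+}$-statistics of $\gamma$ and $\tau$ coincide is false: for $\gamma=\clan{2+.-}$ and $\tau=\clan{+1.-}$ one has $\gamma(1;\clan{+})=0<1=\tau(1;\clan{+})$. The correct computation (with $[\,\cdot\,]$ the indicator and $P(i)$ the number of $\clan{+}$-or-left-endpoint positions among the first $i$ nodes, which is the same for both clans) is $\gamma(i;\clan{+})=P(i)-[i_1\le i]+[j_1\le i]$ and $\tau(i;\clan{+})=P(i)-[i_2\le i]+[j_2\le i]$, hence $\tau(i;\clan{+})-\gamma(i;\clan{+})=([i_1\le i]-[i_2\le i])+([j_2\le i]-[j_1\le i])\ge 0$, where nonnegativity uses \emph{both} $i_1\le i_2$ and $j_2\le j_1$; so the nesting hypothesis is needed here as well, contrary to your account in which the sign conditions hold for free by symmetry. (For the $\clan{-}$-statistic both sides do equal the number of $\clan{-}$-or-right-endpoint positions among the first $i$ nodes, so there your equality claim happens to be correct, consistent with $v_\gamma=v_\tau$ via Lemma~\ref{lem:wyser1}.) With this one step repaired, your argument is complete.
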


Now we are ready to prove Theorem \ref{thm:descofcurveon}.

\begin{proof}[Proof of Theorem \ref{thm:descofcurveon}]
We first show that the curves satisfying  conditions in  Case (1) or Case (2) must lie in $Y_{\gamma}$.
By Proposition \ref{prop:minimalorbitcontcurves} (i), the curves in Case (1) are contained in $Y_\tau$, where $\tau$ is the matchless clan with $v_\tau = v_\gamma$.
By Corollary \ref{cor:contain1}, the closed orbit $Y_{\tau}$ is contained in $ Y_{\gamma}$. 
So the curves in Case (1) are contained in $Y_{\gamma}$.

By Proposition \ref{prop:minimalorbitcontcurves} (ii), the curves in Case (2) are contained in some $Y_\tau$, such that $\tau$ has only one $(a,b)$-matching with $v_\tau = v_\gamma$.
By Corollary \ref{cor:contain2}, if $\gamma'$  has only one  $(i,j)$-matching and $v_{\gamma'}=v_{\tau}$, then 
$Y_{\tau}\subseteq Y_{\gamma'}$.  
By Corollary \ref{cor:contain1}, if $\gamma$ has an $(i,j)$-matching, then $Y_{\gamma'}\subseteq Y_{\gamma}$. 
Thus the curves in Case (2) are contained in $Y_\gamma$. 

Conversely, assume the $T$-stable curve $\eta_{v,w}$ from $v=v_\gamma$ to $w=vt_{ab}$ is contained in $Y_\gamma$. 
If $v(b)\leq p<v(a)$, then $w=vt_{ab}<v$. 
By Corollary \ref{cor:contain3}, $\dot{w}B/B\notin Y_\gamma$, so the curve $\eta_{v,w}$  cannot be contained in $Y_\gamma$.
Thus we can assume $v(a)\leq p<v(b)$. By Proposition~\ref{prop:minimalorbitcontcurves} (ii), there is a clan $\tau$ with only one $(a,b)$-matching such that $Y_\tau$ is the minimal orbit closure which contains the curve $\eta_{v,w}$. Thus $Y_\tau\subseteq Y_\gamma$. 
By Proposition \ref{prop:wyser}, 
$1=\tau(a,b)\leq \gamma(a,b)$. 
In particular, there must be some $(i,j)$-matching in $\gamma$ with $i\leq a<b\leq j$. 
\end{proof}

 Finally, we are in a position to prove Theorem \ref{th:smoothness}.

\begin{proof}[Proof of Theorem \ref{th:smoothness}]
(1) $\Leftrightarrow$ (2). 
By Theorem \ref{thm:descofcurveon}, 
the $T$-stable curve starting from $v=v_\gamma$ with weight $y_{v(a)}-y_{v(b)}$ ($a<b$) is NOT contained in $Y_\gamma$ if and only if either one of the following  two cases holds:
\begin{enumerate}
    \item[(I)] $v(b)\leq p<v(a)$ or 
    \item[(II)] $v(a)\leq p<v(b)$ but no $(i,j)$-matching in $\gamma$ such that
    $i\leq a<b\leq j$. 
\end{enumerate}
Recall that $v_\gamma$ is obtained by assigning each $\clan{+}$ or left endpoint  with $1,2,\ldots,p$ from left to right, and assigning $\clan{-}$ or right endpoint  with $p+1,p+2,\ldots,n$ from left to right. 
Note that if the $i$-th node of $\gamma$ is $\clan{+}$ or left endpoint (resp., $\clan{-}$ or right endpoint), then the $i$-th step on the southeast boundary of $\lambda(\gamma)$ lies in the $v_\gamma(i)$-th row (resp., $(n+1-v_{\gamma}(i))$-th column).

Case (I) implies that the $a$-th node of $\gamma$ is a $\clan{-}$ or a right endpoint and the  $b$-th node is a $\clan{+}$ or left endpoint. Since $a<b$, the tile at $(v(b),n+1-v(a))$ is in $\overline{\lambda(\gamma)}$. See the $*$ tiles in \eqref{eq:curves not in} as an example. Therefore, we have
$$\prod_{(a,b)\text{ in Case (I)}}(y_{v(a)}-y_{v(b)})
=\prod_{(i,j)\in \overline{\lambda(\gamma)}}
(-y_i+y_{n-j+1}).$$

For Case (II), the $a$-th node of $\gamma$ is a $\clan{+}$ or a left endpoint and the  $b$-th node is a $\clan{-}$ or right endpoint. Since $a<b$, the tile at $(v(b),n+1-v(a))$ belongs to $\lambda(\gamma)$. 
Because no $(i,j)$-matching exists for $i \le a < b \le j$, there are no pipes northwest to the box at $(v(a),n+1-v(b))\in \lambda(\gamma)$. Thus the product of weights in Case (II) corresponds to $$\prod_{(a,b)\text{ in Case (II)}}(y_{v(a)}-y_{v(b)})
=\prod_{(i,j)}(y_i-y_{n+1-j}),$$
where the product runs over empty tiles $(i,j)\in \lambda(\gamma)$  such that there are no pipes northwest of $(i,j)$ in the Rothe BPD $D(\gamma)$. 
See the shadowed tiles in \eqref{eq:curves not in} for an example. 

By Lemma \ref{lem:smoothcri}, $Y_\gamma$ is smooth at $v_\gamma$ if and only if 
$$[Y_\gamma]_T|_{v_\gamma} = \prod_{(a,b)\  \text{in Case (I) or (II)}}(y_{v(a)}-y_{v(b)}). $$
The equivalence of (1) and (2) follows from 
Theorem \ref{th:localization} 
and the definition of $\mathfrak{s}_\gamma(x)$. 
 
(2) $\Leftrightarrow$ (3).
Note that if $\tau$ appears as a pattern of $\gamma$, then  $D(\tau)$ can be obtained from  $D(\gamma)$ by deleting some rows and columns. 
So if $D(\gamma)$ contains any of the 5 patterns in Theorem \ref{th:smoothness}, then there exists an empty tile southeast to a pipe, thus we can obtain another BPD fragment by applying a droop operation. 
This implies (2) $\Rightarrow$ (3). 
$$\begin{array}{ccccc}
\BPD{\F\H\\\I\o}& 
\BPD{\F\H\\\I\o\\\I\F}& 
\BPD{\F\H\H\\\I\o\F}& 
\BPD{\F\H\H\\\I\o\F\\\I\F}&
\BPD{\F\H\H\\\I\o\F\\\I\F\X} 
\\[-0.5pc]\\
\clan{3+-.}& \clan{4+1..}& \clan{41.-.}& \clan{51.1..}& \clan{522...}.
\end{array}$$
 
Conversely, we show that if $\gamma$ has more than one BPD fragment, then $\gamma$ must contains one of the 5 patterns. By Proposition \ref{prop:Anna BPD can be obtained by droops}, any BPD fragment can be obtained from $D(\gamma)$  by applying droop operations. Thus there exists some pipe  with an empty tile $\BPD{\O}$ southeast to this pipe in $D(\gamma)$. Suppose that such a $\BPD{\O}$ is at position
$(v_\gamma(a),n+1-v_\gamma(b))$. Then the $a$-th node of $\gamma$ must be a $\clan{+}$ or the left endpoint of some $(a,j)$-matching with $j<b$
(since if $j\geq b$, then the box at $(v_\gamma(a),n+1-v_\gamma(b))$ will be $\BPD{\H}$ or $\BPD{\F}$). Similarly, the $b$-th node of $\gamma$ must be a $\clan{-}$ or the right endpoint of some $(i,b)$-matching with $i>a$. There are five possibilities illustrated below: 
$$\begin{array}{ccccc}
\BPD{
\F\H\X\H\X\\
\I\o\I\O\I\\
\I\O\I\\
\X\H\X\\
\I}& 
\BPD{
\F\H\H\X\H\\
\I\o\O\I\O\\
\I\F\H\\
\X\X\H\\
\I}& 
\BPD{
\F\H\X\H\X\\
\I\o\I\F\X\\
\X\H\X\\
\I\O\I\\
\I}& 
\BPD{
\F\H\H\H\X\\
\I\o\O\F\X\\
\I\F\H\\
\X\X\H\\
\I}& 
\BPD{
\F\H\H\X\H\\
\I\o\F\X\H\\
\X\H\X\\
\I\F\X\\
\I}
\\[-0.5pc]\\
\clan{3+-.}& \clan{4+1..}& \clan{41.-.}& \clan{51.1..}& \clan{522...}.
\end{array}$$
This completes the proof of (3) $\Rightarrow$ (2). 
\end{proof}

\begin{Eg}
Let us consider the running example \eqref{eq:egofugammavgamma}. 
The weights of curves NOT contained in $Y_\gamma$ are 
$$\begin{array}{rl}
\text{\rm(I)}& y_{6}-y_{2},\,y_6-y_3,\,y_6-y_4,\,y_6-y_5,\,y_7-y_5,\,y_8-y_5,\,y_9-y_5,\,y_{10}-y_5;\\
\text{\rm(II)}& y_1-y_{11},\, y_1-y_{10},\, y_1-y_9.
\end{array}$$
The Rothe BPD fragment is 
\begin{equation}\label{eq:curves not in}
\BPD{
\o\o\o\F\H\H\M{1}\\
\F\H\H\X\H\M{*}\M{2}\\
\I\O\F\X\H\M{*}\M{3}\\
\I\O\I\I\O\M{*}\M{4}\\
\I\M{*}\M{*}\M{*}\M{*}\M{*}\M{5}\\
\M{11}\M{10}\M{9}\M{8}\M{7}\M{6}}
\end{equation}
Here the shadowed tiles correspond to Case (II), and the $*$ marks the Case (I).
\end{Eg}

Finally, we remark that, unlike the case of Richardson varieties, the fact that the  $K$-orbit closure $Y_\gamma$ is smooth at $v_\gamma$ and $w_0u_\gamma$ does not guarantee that $Y_\gamma$ is smooth. 
 
\begin{Eg}
Let $\gamma=\clan{22..}$. 
The orbit closure admits the following explicit description
$$Y_\gamma = \{V_\bullet\in Fl_4
\mid XV_1\subset V_3\}$$
where $X=\operatorname{diag}(1,1,0,0)$; see \cite[Corollary 3.7]{CPSU}. 
Note that $Y_\gamma $ is a $\mathbb{P}^1$-bundle over 
$Z = \{V_1\subset V_3\mid XV_1\subset V_3\}$. 
By Pl\"ucker embedding, $Z$ can be identified 
as the subvariety of 
$\mathbb{P}^3\times \mathbb{P}^3$
cut out by the equations 
$$\begin{cases}
x_1y_1+x_2y_2=0,\\ 
x_3y_3+x_4y_4=0,
\end{cases}\qquad 
\text{where }
\begin{array}{r@{\,}l}
{}[x_1:x_2:x_3:x_4]&\in \mathbb{P}^3,\\
{}[y_1:y_2:y_3:y_4]&\in \mathbb{P}^3.
\end{array}$$
The singular loci can be described as 
$$\{x_1=x_2=y_1=y_2=0\}\cup \{x_3=x_4=y_3=y_4=0\}.$$
Using this, it is easy to check $Y_\gamma$ is nonsingular at $\dot{w}B/B$ for $w\in S_4$ with
$$
w(1)\notin \{1,2\}\ni w(4)\quad \text{or}\quad 
w(4)\notin \{1,2\}\ni w(1). $$
In particular, $Y_\gamma$ is smooth at $v_\gamma =\id$. 
Similarly, $Y_\gamma$ is smooth at $w_0u_\gamma$.
However, $Y_\gamma$ itself is not smooth. 
\end{Eg}

\appendix
\section{Examples of ($2,2$)-clans}\label{appendix}

The following diagram displays all ($2,2$)-clans. 
$$\def\myclan#1{\begin{matrix}
\clan{#1}
\end{matrix}}
\xymatrix@!C=1pc{
&&&&&\myclan{31..}\\
&&
{\myclan{3+-.}}
    \ar@{-}[urrr]&&&
{\myclan{22..}}
    \ar@{-}[u]&&&
{\myclan{3-+.}}
    \ar@{-}[ulll]
\\&
{\myclan{2+.-}}
    \ar@{-}[ur]\ar@{..}[urrrr]&&
{\myclan{+2-.}}
    \ar@{-}[ul]\ar@{..}[urr]&&
{\myclan{1.1.}}
    \ar@{-}[u]\ar@{..}[ulll]\ar@{..}[urrr]&&
{\myclan{-2+.}}
    \ar@{-}[ur]\ar@{..}[ull]&&
{\myclan{2-.+}}
    \ar@{-}[ul]\ar@{..}[ullll]
\\%
{\myclan{+1.-}}
    \ar@{-}[ur]\ar@{-}[urrr]&&
{\myclan{1.+-}}
    \ar@{-}[ul]\ar@{-}[urrr]&&
{\myclan{+-1.}}
    \ar@{-}[ul]\ar@{-}[ur]&&
{\myclan{-+1.}}
    \ar@{-}[ul]\ar@{-}[ur]&&
{\myclan{1.-+}}
    \ar@{-}[ur]\ar@{-}[ulll]&&
{\myclan{-1.+}}
    \ar@{-}[ul]\ar@{-}[ulll]
\\%
{\myclan{++--}}
    \ar@{-}[u]&&
{\myclan{+-+-}}
    \ar@{-}[u]\ar@{-}[ull]\ar@{-}[urr]&&
{\myclan{-++-}}
    \ar@{-}[ull]\ar@{-}[urr]&&
{\myclan{+--+}}
    \ar@{-}[ull]\ar@{-}[urr]&&
{\myclan{-+-+}}
    \ar@{-}[u]\ar@{-}[ull]\ar@{-}[urr]&&
{\myclan{--++}}
    \ar@{-}[u]
}$$
Here solid lines stand for the cover relations for the weak order. Solid and dotted lines stand for the cover relations for the strong order in Proposition \ref{prop:wyser}. 
\def\term#1#2#3#4{
{\makebox[0.15\linewidth]{\(#1\)}}
\makebox[0.1\linewidth]{\(\sf #2\)}\quad
\makebox[0.5\linewidth][l]{\(#3\)}
{\makebox[0.2\linewidth][l]{\(#4\)}}
}
\[\term{\text{clans $\gamma$}}{\mbox{\(v_\gamma\)}}
{[Y_\gamma]_T|_{v_\gamma}=\Upsilon(x;y)|_{v_\gamma}}{\text{BPD fragments}}\]
\[\term{\clan{++--}}{1234}
{(y_{1} - y_{3}) (y_{2} - y_{3}) (y_{2} - y_{4})  (y_{1} - y_{4})}
{\BPD{\O\O\\\O\O}}\]
\[\term{\clan{+1.-}}{1234}
{(y_{1} - y_{3}) (y_{2} - y_{4}) (y_{1} - y_{4})}{
\BPD{\O\O\\\O\F}
}\]
\[\term{\clan{2+.-}}{1234}
{(y_{2} - y_{4}) (y_{1} - y_{4})}{
\BPD{\O\F\\\O\I}
}\]
\[\term{\clan{+2-.}}{1234}
{(y_{1} - y_{4})  (y_{1} - y_{3})}{
\BPD{\O\O\\\F\H}
}\]
\[\term{\clan{22..}}{1234}
{(y_{1} - y_{4})}{
\BPD{\O\F\\\F\X}
}\]
\[\term{\clan{3+-.}}
{1234}{(y_{1} + y_{2} - y_{3} - y_{4})}{
\BPD{\F\H\\\I\O}
\BPD{\O\F\\\F\J}
}\]
\[\term{\clan{31..}}{1234}
{1}{
\BPD{\F\H\\\I\F}
}\]
\[\term{\clan{+-+-}}{1324}
{(-y_{2} + y_{3})\cdot (y_{1} - y_{3}) (y_{2} - y_{4})(y_{1} - y_{4})}{
\BPD{\O\O\\\O}
}\]
\[\term{\clan{1.+-}}{1324}
{(-y_{2} + y_{3}) \cdot (y_{2} - y_{4}) (y_{1} - y_{4})}
{\BPD{\O\F\\\O}}
\]
\[\term{\clan{+-1.}}{1324}
{(-y_{2} + y_{3}) \cdot (y_{1} - y_{4}) (y_{1} - y_{3})}{
\BPD{\O\O\\\F}
}\]
\[\term{\clan{1.1.}}{1324}
{(-y_{2} + y_{3}) \cdot (y_{1} - y_{4})}{
\BPD{\O\F\\\F}
}\]
\[\term{\clan{3-+.}}{1324}
{(-y_{2} + y_{3})}{
\BPD{\F\H\\\I}
}\]
\[\term{\clan{+--+}}{1342}
{(-y_{2} + y_{3})(-y_{2} + y_{4}) \cdot (y_{1} - y_{3})(y_{1} - y_{4})}{
\BPD{\O\O\\\|}
}\]
\[\term{\clan{1.-+}}{1342}
{(-y_{2} + y_{3})(-y_{2} + y_{4}) \cdot (y_{1} - y_{4})}{
\BPD{\O\F\\\|}
}\]
\[\term{\clan{2-.+}}{1342}
{(-y_{2} + y_{3})(-y_{2} + y_{4})}{
\BPD{\F\H\\\|}
}\]
\[\term{\clan{-++-}}{3124}
{(-y_{1} + y_{3}) (-y_{2} + y_{3})\cdot (y_{1} - y_{4})(y_{2} - y_{4})}{
\BPD{\O\^\\\O\M{}}
}\]
\[\term{\clan{-+1.}}{3124}
{(-y_{1} + y_{3})(-y_{2} + y_{3}) \cdot (y_{1} - y_{4})}{
\BPD{\O\^\\\F\M{}}
}\]
\[\term{\clan{-2+.}}{3124}
{(-y_{1} + y_{3})(-y_{2} + y_{3})}{
\BPD{\F\^\\\I\M{}}
}\]
\[\term{\clan{-+-+}}{3142}
{(-y_{1} + y_{3}) (-y_{2} + y_{3}) (-y_{2} + y_{4}) \cdot (y_{1} - y_{4})}{
\BPD{\O\^\\\|}
}\]
\[\term{\clan{-1.+}}{3142}
{(-y_{1} + y_{3}) (-y_{2} + y_{3}) (-y_{2} + y_{4})}{
\BPD{\F\^\\\|}
}\]
\[\term{\clan{--++}}{3412}
{(-y_{1} + y_{3}) (-y_{2} + y_{3})(-y_{1}+ y_{4})(-y_{2} + y_{4})}{\BPD{\|\^\M{}\^\\\|}
}\]


\begin{thebibliography}{99}

\bibitem{Araki62}
S. Araki. On root systems and an infinitesimal classification of irreducible symmetric
spaces. J. Math. Osaka City Univ., 13:1–34, 1962.

\bibitem{AF23}
D.~Anderson and W.~Fulton.
\newblock Equivariant cohomology in algebraic geometry.
\newblock Cambridge University Press, 2023.

\bibitem{Brion2}
M.~Brion.
\newblock Equivariant cohomology and equivariant intersection theory.
\newblock Representation Theories and Algebraic Geometry. vol 514 1998, 1--37. 

\bibitem{Brion}
M.~Brion.
\newblock On orbit closures of spherical subgroups in flag varieties.
\newblock Comment. Math. Helv. 76(2) 2001, 263--299.

\bibitem{CPSU}
M. B. Can, M. Precup, J. Shareshian, and \"O. U\u{g}urlu, 
\newblock K-Orbit closures and Hessenberg varieties,
\newblock Forum  Math., Sigma, vol. 13, p. e103, 2025. 

\bibitem{Carrell94}
J. B. Carrell. 
\newblock 
The Bruhat graph of a Coxeter group, a conjecture of Deodhar, and rational smoothness of Schubert varieties. \newblock 
Proceedings of Symposia in Pure Mathematics. American Mathematical Society, 1994: 53-61.

\bibitem{Fulton92}
W. Fulton. 
\newblock Flags, Schubert polynomials, degeneracy loci, and determinantal formulas. \newblock Duke Math. J. 65 (1992), 381–420.

\bibitem{Graham01}
W.~Graham.
\newblock Positivity in equivariant Schubert calculus.
\newblock Duke Math. J. \textbf{109}(3) (2001), 599--614.

\bibitem{GJL}
W.~Graham, M.~Jeon, and S.~Larson.
\newblock Irreducible characteristic cycles for orbit closures of a symmetric subgroup.
\newblock  In \emph{2025 Fall Central Sectional Meeting.} AMS. 

 



\bibitem{Kuntson}
A.~Knutson.
\newblock Schubert polynomials, pipe dreams, equivariant classes, and a co-transition formula.
\newblock arXiv:1909.13777.

\bibitem{MK05}
A. Knutson, E. Miller.  
\newblock Gröbner geometry of Schubert polynomials. 
\newblock Ann. Math., 2005: 1245-1318.

\bibitem{Las}
A. Lascoux. Chern and Yang through ice. Preprint, 2002.
 

\bibitem{LLS18}
T.~Lam, S.~J.~Lee, and M.~Shimozono.
\newblock Back stable Schubert calculus.
\newblock Compos. Math. \textbf{157} (2018), 883--962.

\bibitem{LS82}
A.~Lascoux and M.-P.~Sch\"utzenberger.
\newblock Polyn\^omes de Schubert.
\newblock C.~R.~Acad.~Sci. Paris S\'er.~I Math. \textbf{294} (1982), 393--396.

\bibitem{LV}
G. Lusztig and D. Vogan, 
\newblock 
Singularities of closures of K-orbits on flag manifolds, 
\newblock 
Invent. Math. 71 (1983), no. 2, 365–379.

\bibitem{MacDonald}
I. G. MacDonald, Notes on Schubert polynomials, Publications du LACIM, Montreal, (1991). 

\bibitem{Matsuki}
T.~Matsuki.
\newblock The orbits of affine symmetric spaces under the action of minimal parabolic subgroups.
\newblock J. Math. Soc. Japan \textbf{31}(2) (1979), 331--357.

\bibitem{MO90}
T.~Matsuki and T.~Oshima.
\newblock Embeddings of discrete series into principal series.
\newblock In \emph{The Orbit Method in Representation Theory} (Copenhagen, 1988), Progress in Mathematics, vol.~82, pp.~147--175. Birkh\"auser, Boston, 1990.

\bibitem{McGovern}
W.~McGovern.
\newblock Closures of $K$-orbits in the flag variety for $U(p,q)$.
\newblock J. Algebra \textbf{322} (2009), 2709--2712.



\bibitem{cocoa}
E.~Miller and B.~Sturmfels.
\newblock Combinatorial Commutative Algebra.
\newblock Vol.~227. Springer Science \& Business Media, 2004.

 \bibitem{Pandt}
O. Pechenik, A. Weigandt.
An inverse Grassmannian Littlewood--Richardson rule and extensions. Forum  Math., Sigma (2024), Vol. 12:e114 1–24


\bibitem{Richardson92}
R.~W.~Richardson.
\newblock Intersections of double cosets in algebraic groups.
\newblock Indag. Math. \textbf{3}(1) (1992), 69--77.

\bibitem{Sch}
J. Sch\"urmann, 
\newblock 
Topology of Singular Spaces and Constructible Sheaves, 
\newblock 
Mathematical Monographs (New Series), vol. 63,
\newblock 
Birkh\"auser Verlag, Basel, 2003.

\bibitem{Vogan}
D. Vogan,  
\newblock 
Irreducible characters of semisimple Lie groups III. Proof of the Kazdhan--Lusztig conjecture in the integral case, 
\newblock 
Invent. Math 71 (1983), 381-417.

\bibitem{Weigandt2021}
A.~Weigandt.
\newblock Bumpless pipe dreams and alternating sign matrices.
\newblock J. Combin. Theory Ser.~A \textbf{182} (2021), 105470.

 

\bibitem{WooWyser}
A.~Woo and B.~Wyser.
\newblock Combinatorial results on $(1,2,1,2)$-avoiding $GL(p,\mathbb{C})\times {GL}(q,\mathbb{C})$-orbit closures on ${GL}(p+q,\mathbb{C})/B$.
\newblock Int. Math. Res. Not. \textbf{24} (2015), 13148--13193.

\bibitem{Wyser}
B.~Wyser.
\newblock Schubert calculus of Richardson varieties stable under spherical Levi subgroups.
\newblock J. Algebraic Combin. \textbf{38}(4) (2013), 829--850.

\bibitem{Wyser16}
B.~Wyser.
\newblock The Bruhat order on clans.
\newblock J. Algebraic Combin. \textbf{44} (2016), 495--517.

\bibitem{WY}
B.~Wyser and A.~Yong.
\newblock Polynomials for ${GL}_p\times {GL}_q$ orbit closures in the flag variety.
\newblock Selecta Math. (N.S.) \textbf{20} (2014), 1083--1110.



 





\end{thebibliography}
\end{document}